\newtheorem{thm}{Theorem}[section]
\newtheorem{pro}[thm]{Proposition}
\newtheorem{cor}[thm]{Corollary}
\newtheorem{lem}[thm]{Lemma}
\theoremstyle{definition}
\newtheorem{rem}[thm]{Remark}
\newtheorem{defn}[thm]{Definition}
\newtheorem{notn}[thm]{Notation}
\newtheorem{exam}[thm]{Example}
\numberwithin{equation}{section}
\newcommand{\PP}{\mathbb{P}}
\newcommand{\Aff}{\mathbb{A}}
\newcommand{\CC}{\mathbb{C}}
\newcommand{\RR}{\mathbb{R}}
\newcommand{\ZZ}{\mathbb{Z}}
\newcommand{\OO}{\mathcal{O}}
\newcommand{\I}{\mathcal{I}}
\newcommand{\J}{\mathcal{J}}
\newcommand{\E}{\mathcal{E}}
\newcommand{\F}{\mathcal{F}}
\newcommand{\G}{\mathcal{G}}
\newcommand{\Q}{\mathcal{Q}}
\newcommand{\iso}{\cong}
\newcommand{\pr}{\mathit{pr}}
\newcommand{\id}{\mathit{id}}
\newcommand{\tensor}{\otimes}
\newcommand{\res}[2]{\left.{#1}\right|_{#2}} 
\newcommand{\st}{\;\vline\;} 
\newcommand{\onto}{\twoheadrightarrow}
\newcommand{\Coh}{\mathrm{Coh}}
\newcommand{\ch}{\mathrm{ch}}
\newcommand{\td}{\mathrm{td}}
\newcommand{\rk}{\mathrm{rk}}
\newcommand{\coho}{\mathcal{H}}
\newcommand{\stab}{\mathrm{Stab}}
\newcommand{\hilb}{\mathrm{Hilb}}
\newcommand{\sym}{\mathrm{Sym}}
\newcommand{\Ext}{\mathrm{Ext}}
\newcommand{\Pic}{\mathrm{Pic}}
\newcommand{\TTor}{\mathcal{T}\!\mathit{or}}
\newcommand{\MI}{\mathcal{M}^{\mathrm{I}}}
\newcommand{\MII}{\mathcal{M}^{\mathrm{II}}}
\newcommand{\MIII}{\mathcal{M}^{\mathrm{III}}}
\DeclareMathOperator{\Hom}{Hom}
\DeclareMathOperator{\Spec}{Spec}
\begin{document}

\title{Bridgeland stability conditions and skew lines on $\PP^3$}

\author{
\name{Sammy Alaoui Soulimani\thanks{Sammy Alaoui Soulimani Email: sammyalaoui@gmail.com}
and Martin G.\ Gulbrandsen\thanks{Martin G.\ Gulbrandsen Email: martin.gulbrandsen@uis.no}}
\affil{University of Stavanger, Department of Mathematics and Physics, 4036 Stavanger, Norway}
}

\maketitle

\begin{abstract}
Inspired by Schmidt's work on twisted cubics \cite{Sch2020}, we study
wall crossings in Bridgeland stability, starting with the Hilbert scheme
$\hilb^{2m+2}(\PP^3)$ parametrizing pairs of skew lines and plane conics
union a point. We find two walls. Each wall crossing corresponds to a
contraction of a divisor in the moduli space and the contracted space
remains smooth. Building on work by Chen--Coskun--Nollet \cite{CCN2011}
we moreover prove that the contractions are $K$-negative extremal in the
sense of Mori theory and so the moduli spaces are projective.
\end{abstract}


\section{Introduction}\label{Intro}

After the introduction of Bridgeland's manifold of stability conditions
on a triangulated category \cite{Bri2007}, several applications to the
study of the birational geometry of moduli spaces have appeared: the
moduli space is viewed as parametrizing stable objects in the derived
category of some underlying variety $X$, and the question is how the
moduli space changes as the stability condition varies. This is the
topic of wall crossing in the stability manifold. We refer to
\cite{MS2017} for an overview and in particular for examples of the
success of this viewpoint in cases where $X$ is a surface. For
threefolds and notably in the case $X=\PP^3$, important progress was
made by Schmidt \cite{Sch2020}, allowing among other things a study of
wall crossings for the Hilbert scheme of twisted cubics (see also Xia
\cite{Xia2018} for further work on this case; additional examples in the
same spirit have been investigated by Gallardo--Lozano Huerta--Schmidt
\cite{GLS2018} and Rezaee \cite{Rez2021}). The case considered in the
present text is that of pairs of skew lines in $\PP^3$ and their
deformations. This is analogous to twisted cubics in the sense that a
twisted cubic degenerates to a plane nodal curve with an embedded point
much as a pair of skew lines degenerates to a pair of lines in a plane
together with an embedded point.

More precisely we study wall crossing for the Hilbert scheme
$\hilb^{2m+2}(\PP^3)$ of subschemes $Y\subset \PP^3$ with Hilbert
polynomial $2m+2$. It has two smooth components $\mathcal C$ and
$\mathcal S$: a general point in $\mathcal C$ is a conic-union-a-point
$Y = C\cup\{P\}$ and a general point in $\mathcal S$ is a pair of skew
lines $Y = L_1\cup L_2$. Note that when a line pair is deformed until
the two lines meet, the result is a pair of intersecting lines with an
embedded point at the intersection, and this can also be viewed as a
degenerate case of a conic union a point.

For an appropriately chosen Bridgeland stability condition on the
bounded derived category of coherent sheaves $D^b(\PP^3)$, the ideal
sheaves $\I_Y$ can be viewed as the stable objects with fixed
Chern character, say $v=\ch(\I_Y)$. When deforming the stability
condition, we identify two walls, separating three chambers. Getting
slightly ahead of ourselves, the situation is illustrated in Figure
\ref{fig:lambdawalls} in Section \ref{sec:walls:brdgstability}: $\alpha$
and $\beta$ are parameters for the stability conditions considered and
we restrict ourselves to the region to the immediate left in the picture
of the hyperbola $\beta^2-\alpha^2=4$ (the role of this boundary curve
is explained in Section \ref{sec:prelim:comparison}). In this region we
have the two walls $W_1$ and $W_2$ separating three chambers, labeled by
Roman numerals as in the figure. Let $\MI$, $\MII$ and $\MIII$ be the
moduli spaces of Bridgeland stable objects with Chern character $v$ in
each chamber, considered as algebraic spaces (for existence see
Piyaratne--Toda \cite[Corollary 4.23]{PT2019}).

Our first main result contains the set-theoretical description of these moduli spaces:
\begin{thm}\label{thm:main1}
\leavevmode
\begin{enumerate}
   \item[(I)] $\mathcal{M}^{\mathrm{I}}$ is $\hilb^{2m+2}(\mathbb{P}^3)$
   with its two components $\mathcal{S}$ and $\mathcal{C}$ described
   above.
   \item[(II)] $\mathcal{M}^{\mathrm{II}}$ consists of:
   \begin{enumerate}
      \item[(i)] Ideal sheaves $\I_Y$ for $Y \in
      \hilb^{2m+2}(\mathbb{P}^3)$ not contained in a plane.
      \item[(ii)] Non-split extensions $\F_{P,V}$ in 
      \begin{equation}\label{eq:FpV}
	 0\to \I_{P/V}(-2)
	 \to \F_{P,V}
	 \to \OO_{\PP^3}(-1)
	 \to 0
      \end{equation}
      for $V \subset \PP^3$ a plane and $P\in V$. Moreover, $\F_{P,V}$
      is uniquely determined up to isomorphism by the pair $(P,V)$.
   \end{enumerate}
   \item[(III)] $\mathcal{M}^{\mathrm{III}}$ consists of:
   \begin{enumerate}
      \item[(i)] Ideal sheaves $\I_Y$ for $Y \in
      \hilb^{2m+2}(\mathbb{P}^3)$ a pair of disjoint lines or a pure
      double line.
      \item[(ii)] Non-split extensions $\G_{P,V}$ in 
      \begin{equation}\label{eq:GpV}
         0\to \OO_V(-2)
	 \to \G_{P, V}
	 \to \I_P(-1)
	 \to 0
      \end{equation}
      with $P$ and $V$ as above. Moreover, $\G_{P,V}$ is uniquely
      determined by the pair $(P,V)$.
   \end{enumerate}
\end{enumerate}	
\end{thm}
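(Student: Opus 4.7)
The plan is the standard wall-crossing program: start from chamber I (the Gieseker chamber) and trace the effect of each of the two walls on the stable objects of fixed Chern character $v$. For part (I), the large-volume-limit theorem on $\PP^3$ (recast in the form needed here by Schmidt \cite{Sch2020}) identifies Bridgeland-stable objects of class $v$ in chamber I with the Gieseker-semistable sheaves of that class. Since $v$ has rank $1$ and fixed Hilbert polynomial $2m+2$, these are exactly the ideal sheaves $\I_Y$ for $Y \in \hilb^{2m+2}(\PP^3)$, yielding $\MI = \hilb^{2m+2}(\PP^3)$.

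For part (II), wall $W_1$ is governed by a destabilizing sub/quotient pair whose Chern characters match those of $\OO_{\PP^3}(-1)$ and $\I_{P/V}(-2)$. For any $Y$ contained in a plane $V$, the subsheaf inclusion $\OO_{\PP^3}(-1) = \I_V \hookrightarrow \I_Y$ fits into a short exact sequence
\[ 0 \to \OO_{\PP^3}(-1) \to \I_Y \to \I_{P/V}(-2) \to 0, \]
where the identification of the quotient $\I_{Y/V}$ with $\I_{P/V}(-2)$ comes from the snake-lemma sequence $0 \to \I_{Y/V} \to \OO_V(-2) \to \OO_P \to 0$, with $P$ the residual point of $Y$ relative to its planar conic component. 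This shows that $\I_Y$ becomes $W_1$-unstable precisely for planar $Y$. Crossing $W_1$ swaps the roles of sub and quotient, producing the non-split extensions $\F_{P,V}$ of (\ref{eq:FpV}); their uniqueness up to isomorphism given $(P,V)$ follows from the fact that $\Ext^1(\OO_{\PP^3}(-1), \I_{P/V}(-2)) \cong H^1(\I_{P/V}(-1))$ is one-dimensional, as one sees from the sequence $0 \to \I_{P/V}(-1) \to \OO_V(-1) \to \OO_P \to 0$ on $V \cong \PP^2$.

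For part (III), wall $W_2$ is governed analogously by a destabilizing pair with Chern characters of $\OO_V(-2)$ and $\I_P(-1)$. For $Y$ of conic-union-point type, i.e.\ a planar conic $C$ in some plane $V$ together with a residual point $P$ (possibly off $V$), I would produce a destabilizing short exact sequence, a priori in the tilted heart rather than in $\Coh(\PP^3)$, of the form
\[ 0 \to \I_P(-1) \to \I_Y \to \OO_V(-2) \to 0, \]
with the pair $(P,V)$ recovered from the geometry of $Y$. Crossing $W_2$ yields the extensions $\G_{P,V}$ of (\ref{eq:GpV}), whose uniqueness again follows from a one-dimensional $\Ext^1$ calculation. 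The ideal sheaves surviving to chamber III are then exactly those admitting no such destabilization, namely the disjoint pairs of lines and pure double lines; note that a pure double line with Hilbert polynomial $2m+2$ is automatically non-planar, so it does not destabilize at either wall.

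The main obstacle is completeness: showing that no other Bridgeland-stable objects of class $v$ exist in chambers II or III. This requires enumerating the numerical walls in the region $\beta^2 - \alpha^2 < 4$ and ruling out additional destabilizing classes via Bogomolov--Gieseker-type inequalities and the slope bookkeeping in the framework of Schmidt \cite{Sch2020}. A subsidiary but non-trivial check is that each proposed extension $\F_{P,V}$ and $\G_{P,V}$ is itself Bridgeland-stable on the appropriate side of the wall, which reduces to verifying that it has no destabilizing subobject in the tilted heart.
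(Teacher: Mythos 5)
Your outline follows the same route as the paper, but as written it stops short at the two places where the real content lies. First, the completeness step that you defer to the end is not a subsidiary check but the bulk of the argument, and both parts (II)--(III) \emph{and} part (I) depend on it. The paper's mechanism is: (a) show there is exactly \emph{one} tilt-stability wall for $v=(1,0,-2,2)$ in the region $\beta<0$, by intersecting every potential semicircular wall with the ray $\beta=-2$, pinning down the twisted Chern characters of any destabilizing sub/quotient pair to $(1,1,\tfrac{1}{2},*)$ and $(0,1,-\tfrac{1}{2},*)$ via the Bogomolov inequality, and then invoking Schmidt's classification of tilt-semistable objects with those invariants to conclude that the destabilizers are exactly $\I_Z(-1)$ and $\I_{Z'/V}(-2)$ with the lengths of $Z$ and $Z'$ summing to one; (b) lift this single $\nu$-wall to exactly two $\lambda$-walls via Schmidt's comparison theorem, which requires verifying that the candidate walls $W_1$, $W_2$ actually enter the region where both destabilizers have positive tilt slope --- a computation of the slopes of $W_1$ and $W_2$ at $(\alpha_0,\beta_0)=(\tfrac{3}{2},-\tfrac{5}{2})$. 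Your part (I) also rests on (a): the large-volume identification lives at $\alpha\gg 0$, far from chamber I, and one needs the absence of intervening $\nu$-walls together with the refinement implications $(1)\Rightarrow(2)$ and $(3)\Rightarrow(4)$ of Theorem \ref{thm:refine} to transport stability down to a neighborhood of the hyperbola. None of this is routine bookkeeping, and the theorem is not proved without it.

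Second, and more concretely a logical gap: your part (III) only discusses which \emph{ideal sheaves} survive the crossing of $W_2$, but by your own part (II) the stable objects in chamber II also include the extensions $\F_{P,V}$. If these survived to chamber III, statement (III) would be false. One must show that each $\F_{P,V}$ is strictly semistable on $W_2$, which the paper does by exhibiting a second short exact sequence $0\to\I_P(-1)\to\F_{P,V}\to\OO_V(-2)\to 0$ (the middle column of the diagram in Lemma \ref{lem:diagramF}); your proposal never produces this sequence. Relatedly, ``crossing the wall swaps sub and quotient'' must be made directional: to know that chamber II contains the extensions with $\I_{P/V}(-2)$ as subobject (rather than the opposite order), and chamber III those with $\OO_V(-2)$ as subobject, one has to determine the sign of $\lambda_{\alpha,\beta,s}(\F)-\lambda_{\alpha,\beta,s}(\G)$ on the relevant side of each wall before applying Proposition \ref{prop:crossing}; the paper does this by differentiating the defining equation of the wall in the $\alpha$-direction. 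Finally, your parenthetical ``possibly off $V$'' in part (III) quietly skips the conic-with-embedded-point and incident-line cases, for which the destabilizing sequence at $W_2$ has a different shape (with $P\in V$) and must be constructed separately.
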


To locate walls and classify stable objects we employ the method due to
Schmidt \cite{Sch2020}, which involves ``lifting'' walls from an
intermediate notion of tilt stability. Schmidt considers as an
application the Hilbert scheme $\hilb^{3m+1}(\PP^3)$: it parametrizes
twisted cubics and plane cubics union a point. This was our starting
point and we can apply many of Schmidt's results directly, although
modified or new arguments are needed as well. The end result is closely
analogous in the two cases, with two wall crossings of the same nature.
In the twisted cubic situation, however, Schmidt also finds an
additional ``final wall crossing'' where all objects are destabilized.
This has no direct analogy in our case.

Next we describe the moduli spaces geometrically, guided by the set
theoretic classification of objects above; this leads to contractions of
the two smooth components $\mathcal C$ and $\mathcal S$ of $\MI =
\hilb^{2m+2}(\PP^3)$. First introduce notation for the loci that are
destabilized by the two wall crossings according to the above
classification:

\begin{notn}\label{notn:divisors}
\leavevmode
\begin{itemize}
   \item[(a)]
   Let $E\subset \mathcal C$ be the divisor consisting of all planar $Y
   \in \mathcal C$.
   \item[(b)]
   Let $F\subset \mathcal S$ be the divisor consisting of all $Y\in
   \mathcal S$ having an embedded point.
\end{itemize}
\end{notn}

Thus the locus (II)(i) is $(\mathcal C\setminus E) \cup \mathcal S$ and
the locus (III)(i) is $\mathcal S\setminus F$. On the other hand both
loci (II)(ii) and (III)(ii) are parametrized by the incidence variety
$I\subset\PP^3\times\check\PP^3$ consisting of pairs $(P,V)$ of points
$P\in\PP^3$ inside a plane $V\in\check\PP^3$. The process of replacing
$E$ and $F$ by $I$ can be realized as contractions of algebraic spaces:
$E$ and $F$ may be viewed as projective bundles over $I$, and in Section
\ref{sec:modulispace} we apply Artin's contractibility criterion to
obtain smooth algebraic spaces $\mathcal C'$ and $\mathcal S'$ each
containing the incidence variety $I$ as a closed subspace, and
birational morphisms
\begin{align*}
   \phi&\colon \mathcal C \to \mathcal C'\\
   \psi&\colon \mathcal S \to \mathcal S'
\end{align*}
which are isomorphisms outside of $E$, respectively $F$, and restrict to
the natural maps $E\to I$, respectively $F\to I$. Moreover $E\subset
\mathcal C$ is disjoint from $\mathcal S$, so the union $\mathcal C'\cup
\mathcal S$ makes sense as the gluing together of $(\mathcal C\setminus
E)\cup \mathcal S$ and $\mathcal C'$. We can then state our second main
result:

\begin{thm}\label{thm:main2}
\leavevmode
\begin{itemize}
   \item[(a)] $\MII$ is isomorphic to $\mathcal C'\cup \mathcal S$.
   \item[(b)] $\MIII$ is isomorphic to $\mathcal S'$.
\end{itemize}
\end{thm}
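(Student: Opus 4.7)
The plan is to exploit the set-theoretic classification of Theorem~\ref{thm:main1} together with universal families to produce algebraic morphisms $\mathcal C'\cup\mathcal S\to\MII$ and $\mathcal S'\to\MIII$, and then to promote these into isomorphisms using smoothness of both source and target. Parts~(a) and~(b) are entirely parallel, so I focus on part~(a).

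First I would construct a morphism $\mathcal C\cup\mathcal S\to\MII$ which factors through the contraction $\phi\colon \mathcal C\to\mathcal C'$, inducing the desired morphism $\mathcal C'\cup\mathcal S\to\MII$. On the open subset $(\mathcal C\setminus E)\cup \mathcal S$ the universal ideal sheaf on $\hilb^{2m+2}(\PP^3)\times\PP^3$ is already a family of $\MII$-stable objects by Theorem~\ref{thm:main1}(II)(i), so the moduli property of $\MII$ gives a morphism on this open. Over $E$ the ideal sheaves $\I_Y$ of planar $Y\subset V$ become strictly semistable on the wall $W_1$, with Jordan--Hölder factors $\OO(-1)$ and $\I_{P/V}(-2)$, and in chamber~II they are replaced by the opposite extension $\F_{P,V}$ of \eqref{eq:FpV}. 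I would therefore perform a relative elementary modification of the universal ideal sheaf over $E\times\PP^3$ --- reversing the destabilizing short exact sequence --- to obtain a family of $\MII$-stable objects whose fibers are the $\F_{P,V}$. Since $\F_{P,V}$ depends only on $(P,V)\in I$, this modified family descends along the projective bundle $E\to I$, which by Artin's contractibility criterion (as set up in Section~\ref{sec:modulispace}) is precisely the factorization through $\phi$. Gluing on $\mathcal C\setminus E$ yields the sought morphism $\mathcal C'\cup\mathcal S\to\MII$.

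Bijectivity on geometric points is then immediate from Theorem~\ref{thm:main1}(II): the open $(\mathcal C\setminus E)\cup\mathcal S$ surjects onto type~(II)(i), and by the uniqueness clause in~(II)(ii) the closed subspace $I\subset \mathcal C'$ surjects onto type~(II)(ii). To promote bijectivity to an isomorphism I would verify smoothness of $\MII$ along the image of $I$ by a direct $\Ext^2$ vanishing for the complexes $\F_{P,V}$ in the heart of the tilted $t$-structure; since $\mathcal C'\cup\mathcal S$ is smooth by construction, a bijective morphism of smooth algebraic spaces of the same dimension is an isomorphism by Zariski's main theorem. Part~(b) is handled by the same recipe applied to wall $W_2$, now with Jordan--Hölder factors $\OO_V(-2)$ and $\I_P(-1)$ and producing the family of extensions $\G_{P,V}$ over $F\to I$ in place of the $\F_{P,V}$.

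The main technical obstacle is the construction of the modified family over $E$ (respectively $F$): pointwise reversal of the destabilizing extension does not automatically glue into a family of complexes on $E\times\PP^3$, and one must check by a relative $\Hom/\Ext$ computation that the universal destabilizing sequence admits a functorial opposite extension that descends along $E\to I$ (respectively $F\to I$). Once this family is in hand, Artin contractibility together with Theorem~\ref{thm:main1} combine to deliver the result.
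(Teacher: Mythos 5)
Your overall strategy is the one the paper uses: replace the universal ideal sheaf over $E$ (resp.\ $F$) by an elementary modification whose fibers are the reversed extensions $\F_{P,V}$ (resp.\ $\G_{P,V}$), get a morphism from $\hilb^{2m+2}(\PP^3)$ (resp.\ $\mathcal S$) to the moduli space, and conclude by smoothness of the target (Propositions \ref{pro:dimExtF} and \ref{pro:dimExtG}) and uniqueness of normal contractions. Two remarks on the architecture before the main issue: the descent along $E\to I$ that you list as an obstacle is not actually needed --- once you have a morphism $\hilb^{2m+2}(\PP^3)\to\MII$ whose fibers agree with those of $\phi$, uniqueness of normal contractions does the rest --- and the paper realizes your ``relative elementary modification'' very concretely as the ideal sheaf $\I_{\mathcal Y\cup\mathcal V}$ of the (non-flat) scheme-theoretic union, whose flatness and fiberwise exact sequence
\begin{equation*}
   0 \to \I_{\mathcal Y_\xi/\mathcal V_\xi}
   \to \I_{\mathcal Y\cup\mathcal V}\tensor k(\xi)
   \to \I_{\mathcal V_\xi} \to 0
\end{equation*}
are established by a $\TTor$ computation (Lemma \ref{lem:opposite-ses}). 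That takes care of your gluing worry.

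The genuine gap is elsewhere: you assert that the modified family has fibers ``the $\F_{P,V}$'', but the elementary modification only produces \emph{some} extension of $\OO_{\PP^3}(-1)$ by $\I_{P/V}(-2)$ on each fiber over $E$, and nothing in your argument rules out the split extension $\OO_{\PP^3}(-1)\oplus\I_{P/V}(-2)$, which is not $\lambda$-stable in chamber II. Since $\Ext^1(\OO_{\PP^3}(-1),\I_{P/V}(-2))$ is one-dimensional, everything hinges on showing the fiber extension is nonsplit, and this does not follow formally from the modification: it is a geometric statement about how $\mathcal Y$ deforms transversally to $E$. The paper devotes the bulk of Sections \ref{sec:familychamberII} and \ref{sec:chamberIII} to exactly this point: Lemma \ref{lem:nosurjectionO(-2)} shows the split extension admits no surjection onto $\OO_V(-2)$, Lemma \ref{lem:quotientexists1} manufactures a quotient $\I_{Y'/V}$ of the fiber from a tangent direction $T\subset S$ transverse to $E$, and then a case-by-case computation (disjoint point, embedded point at a smooth point, at a node, on a double line) exhibits a $T$ with $Y'=C$, hence a surjection onto $\OO_V(-2)$ forcing nonsplitness; the chamber III analogue uses $\I_{Q/V}(-1)$ and Lemma \ref{lem:nosurjectionIq/v}. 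Without this step your ``family of $\MII$-stable objects'' is unsubstantiated and the classifying morphism to $\MII$ need not exist, so this is the missing core of the proof rather than a routine verification.
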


To prove the theorem it suffices to treat the contracted spaces as
algebraic spaces. However, they turn out to be projective varieties: the
contractions are in fact $K$-negative extremal contractions in the sense
of Mori theory. The case of $\mathcal S \to \mathcal S'$ can be found in
previous work by Chen--Coskun--Nollet \cite{CCN2011} and in fact it
turns out that $\mathcal S'$ is a Grassmannian; see Section
\ref{sec:familychamberII}. Inspired by this work, we exhibit in Section
\ref{sec:mori} the map $\mathcal C\to \mathcal C'$ as a $K$-negative
extremal contraction. This may be contrasted with Schmidt's approach in
the twisted cubic situation \cite{Sch2020}, where projectivity of the
moduli spaces is proved by viewing them as moduli of quiver
representations.

In Section \ref{sec:prelim}, we list the background results that we
need, in particular, we briefly recall the construction of stability
conditions on threefolds, along with the notion of tilt-stability. In
Section \ref{sec:walls} we apply Schmidt's machinery to prove Theorem
\ref{thm:main1}. In Section \ref{sec:modulispace} we study universal
families and prove Theorem \ref{thm:main2}. Finally, in Section
\ref{sec:mori} we work out the Mori cone of $\mathcal C$.

We work over $\CC$. Throughout and in particular in Section
\ref{sec:modulispace}, intersections and unions of subschemes are
defined by the sum and intersection of ideals, respectively, and
inclusions and equalities between subschemes are meant in the scheme
theoretic sense. The \emph{relative ideal} of an inclusion $Z\subset Y$
of two closed subschemes of some ambient scheme is the ideal $\I_{Z/Y}
\subset \OO_Y$ defining $Z$ as a subscheme of $Y$.

\section*{Acknowledgments}

This work is supported by the Research Council of Norway under grant
no.\ 230986, and is part of the Ph.D. thesis \cite{AS2023} defended by
the first author at the university of Stavanger on the 29th of March
2023. The second author was also supported by the Swedish Research
Council under grant no.\ 2016-06596 while the author was in residence at
Institut Mittag--Leffler in Djursholm, Sweden during the fall of 2021.


\section{Preliminaries}\label{sec:prelim}

After detailing the two components of $\hilb^{2m+2}(\PP^3)$, we collect
notions and results from the literature surrounding Bridgeland stability
and wall crossings for smooth projective threefolds. There are no
original results in this section.

\subsection{The Hilbert scheme and its two components}\label{sec:twocomponents}

It is known that $\hilb^{2m+2}(\PP^3)$ has two smooth components
$\mathcal C$ and $\mathcal S$, whose general points are conics union a
point and pairs of skew lines, respectively. A quick parameter count
yields $\dim \mathcal C = 11$ and $\dim \mathcal S = 8$. We refer to Lee
\cite{Lee2000} for an overview, to Chen--Nollet \cite{CN2012} for the
smoothness of $\mathcal C$ and to Chen--Coskun--Nollet \cite{CCN2011}
for the smoothness of $\mathcal S$. In fact, the referenced works show
that $\mathcal C$ is the blowup
\begin{equation*}
   \mathcal{C} \to \PP^3\times\hilb^{2m+1}(\PP^3)
\end{equation*}
along the universal conic $\mathcal Z \subset
\PP^3\times\hilb^{2m+1}(\PP^3)$ and $\mathcal S$ is the blowup
\begin{equation*}
   \mathcal{S} \to \sym^2(G(2,4))
\end{equation*}
along the diagonal in the symmetric square of the Grassmannian $G(2,4)$
of lines in $\PP^3$. In other words, it is the Hilbert scheme
$\hilb^2(G(2,4))$ of finite subschemes in $G(2,4)$ of length two.

We pause to make some preparations regarding embedded points: by a curve
$C\subset\PP^3$ \emph{with an embedded point} at $P\in C$ we mean a
subscheme $Y\subset\PP^3$ such that $C\subset Y$ and the relative ideal
$\I_{C/Y}$ is isomorphic to $k(P)$. This makes sense even when we allow
$C$ to be singular or nonreduced. Embedded points are in bijection with
normal directions, i.e.\ lines in the normal space to $C\subset \PP^3$
at $P$: explicitly in our situation, suppose that $P=(0,0,0)$ in local
affine coordinates $x,y,z$ and $C$ is a conic defined by the ideal $I_C
= (q(x,y), z)$, where $q$ is a quadric vanishing at the origin. A normal
vector to $C$ may be viewed as a $k$-linear map
\begin{equation*}
   \phi\colon I_C/\mathfrak{m}_P I_C \to k
\end{equation*}
given by say $a = \phi(q)$ and $b = \phi(z)$. Thus $(a:b) \in \PP^1$
parametrizes normal directions and the corresponding scheme $Y$ with an
embedded point at $P$ is defined by the kernel of the induced map
$I_C\to k$, which is
\begin{equation*}
   I_Y = \mathfrak{m}_P I_C + (bq - az).
\end{equation*}

\begin{exam}\label{ex:embedded}
With notation as above, consider the degenerate conic defined by $q(x,y)
= xy$. When $(a:b) = (1:0)$ we obtain the planar scheme $Y$ in the
$xy$-plane given by
\begin{equation*}
   I_Y = (x^2y, xy^2, z) = (xy,z) \cap (z, x^2,y^2)
\end{equation*}
where the final form exhibits $Y$ as the scheme theoretic union of a
pair of lines and a thickening of the origin in the $xy$-plane. At the
other extreme $(a:b) = (0:1)$ we obtain
\begin{equation*}
   I_Y = (zy,zx,yz,z^2) = (xy,z) \cap (x,y,z)^2
\end{equation*}
which we label \emph{spatial}: the subscheme $Y\subset\PP^3$ is not
contained in any nonsingular surface since it contains a full first
order infinitesimal neighbourhood around $P$. It is easy to check that
for the remaining values of $(a:b)$ the corresponding scheme $Y$ is
neither planar nor spatial. Analogous observations hold for the double
line $q(x,y) = y^2$.
\end{exam}

With these preparations, we next list all elements $Y\in
\hilb^{2m+2}(\PP^3)$, including degenerate cases. We follow Lee
\cite[Section 3.5]{Lee2000}, to which we refer for further details and
proof that the following list is exhaustive.

The component $\mathcal C$ parametrizes subschemes $Y$ of the following
form: let $C$ be a conic in a plane $V\subset\PP^3$, possibly a union of
two lines or a planar double line. Then $Y$ is either the disjoint union
of $C$ and a point $P\in\PP^3$, or $C$ with an embedded point at $P\in
C$. If $C$ is nonsingular at $P$, embedded points correspond to normal
directions, parametrized by a $\PP^1$. Since even degenerate conics are
complete intersections, also embedded point structures at a singular or
nonreduced point $P$ form a $\PP^1$ as in Example \ref{ex:embedded}, and
among these exactly one is planar ($Y$ contained in a plane) and exactly
one is spatial ($Y$ contains the first order infinitesimal neighbourhood
of $P$ in $\PP^3$).

The component $\mathcal S$ parametrizes pairs $Y = L_1 \cup L_2$ of skew
lines, together with its degenerations. These are of the following three
types: (1) a pair of incident lines $L_1\cup L_2$ with a spatial
embedded point at the intersection point, (2) a planar double line with
a spatial embedded point, or (3) a double line in a quadric surface,
i.e. there is a line $L$ in a nonsingular quadric surface $Q$ such that
$Y$ is the effective divisor $2L\subset Q$, but viewed as a subscheme of
$\PP^3$. We label this case the \emph{pure double line}, where purity
refers to the lack of embedded components.

Clearly, then, $\mathcal C\cap \mathcal S$ consists of the incident or
planar double lines with a spatial embedded point.

\subsection{Stability conditions and walls}

Let $X$ be a smooth projective threefold over $\CC$ and fix a finite
rank lattice $\Lambda$ equipped with a homomorphism $K(X) \to \Lambda$
from the Grothendieck group of coherent sheaves modulo short exact
sequences. On $\PP^3$ we will take $\Lambda = \ZZ \oplus \ZZ \oplus
\tfrac{1}{2}\ZZ \oplus \tfrac{1}{6} \ZZ$ equipped with the Chern
character map $\ch\colon K(\PP^3)\to \Lambda$.

Recall \cite{Bri2007, BMT2014, BMS2016} that a Bridgeland stability
condition $\sigma = (\mathcal A, Z)$ on $X$ (with respect to $\Lambda$)
consists of 
\begin{itemize}
   \item[(i)] an abelian subcategory $\mathcal A \subset D^b(X)$, which
   is the heart of a bounded $t$-structure, and
   \item[(ii)] a stability function $Z$, which is a group homomorphism
   \begin{equation*}
      Z\colon \Lambda \to \CC
   \end{equation*}
   whose value on any nonzero object $\E\in\mathcal A$ is in the upper
   half plane
   \begin{equation*}
      \mathbb{H}  =  \big\{z \in \CC \st \Im z \geq 0 \big\}
      \setminus \RR_{\geq 0}.
   \end{equation*}
\end{itemize}
This is subject to a list of axioms which we will not give (see
\cite[Section 8]{BMS2016}). We may then partially order the nonzero
objects in $\mathcal A$ by their \emph{slope}
\begin{equation*}
   \lambda_\sigma = -\Re(Z)/\Im(Z) \in \RR \cup \{+\infty\}.
\end{equation*}
This yields a notion of $\sigma$-stability and $\sigma$-semistability
for objects in $\mathcal A$ in the usual way by comparing the slope of
an object with that of its sub- or quotient objects. These notions
extend to $D^b(X)$ by shifting in the sense of the $t$-structure.

Bridgeland's result \cite[Theorem 1.2]{Bri2007} gives the set
$\stab_\Lambda(X)$ of stability conditions the structure of a complex
manifold, and for a given $u\in\Lambda$, it admits a wall and chamber
structure: if an object $\E$ goes from being stable to being unstable as
the stability condition $\sigma$ varies, $\sigma$ has to pass through a
stability condition for which $\E$ is strictly semistable. This
observation leads to the definition of a wall:

\begin{defn}\label{numwallsactuwalls}
Fix $u \in \Lambda$.
\begin{enumerate}
   \item[i.] \textit{Numerical walls}:
   A numerical wall is a nontrivial proper solution set
   \begin{equation*}
      W_{v,w}=\{\sigma \in \stab_\Lambda(X) \st
      \lambda_\sigma(v)=\lambda_\sigma(w)\}
   \end{equation*}
   where $u = v+w$ in $\Lambda$.
   \item[ii.]\textit{Actual walls}:
   Let $W_{v,w}$ be a numerical wall, defined by classes $v$, $w$
   satisfying $u=v+w$. A subset $V \subset W_{v,w}$ is an actual wall if
   for each point $\sigma \in V$, there is a short exact sequence
   \begin{equation*}\label{destabseq}
      0 \to \F \to \E \to \G \to 0
   \end{equation*}
   in $\mathcal{A}$ of $\sigma$-semistable objects $\F, \E, \G$ with
   classes $u,v,w$ in $\Lambda$, respectively, such that
   $\lambda_\sigma(\F)=\lambda_\sigma(\E)=\lambda_\sigma(\G)$.
   
   When the context is clear, we drop the word ``actual" and just say
   ``wall".
\end{enumerate} 
\end{defn}

Given a union of walls, we refer to each connected component of its
complement in $\stab_\Lambda(X)$ as a \emph{chamber}. By the arguments
in \cite[Section 9]{Bri2008} there is a locally finite collection of
(actual) walls in $\stab_\Lambda(X)$, each being a closed codimension
one manifold with boundary, such that the set of stable objects in
$\mathcal A$ with class $u\in \Lambda$ remains constant within each
chamber, and there are no strictly semistable objects in a chamber. 

We say that a short exact sequence as in (ii) above \emph{defines} the
wall. Relaxing this, an unordered pair $\{\F, \G\}$ \emph{defines} the
wall if there is a short exact sequence in either direction (i.e.\ we
allow the roles of sub and quotient objects to be swapped) as in (ii).
Semistability of $\F$ and $\G$ is automatic, i.e.\ it follows from
semistability of $\E$ and the equality between slopes, but see Remark
\ref{rem:weakwalls}.

\begin{rem}
A \emph{very weak} stability condition $(\mathcal A, Z)$ is a weakening
of the above concept (see Piyaratne--Toda \cite{PT2019}) where $Z$ is
allowed to map nonzero objects in $\mathcal A$ to zero. One may define
an associated slope function $\lambda$ as before, with the convention
that $\lambda(\E) = +\infty$ also when $Z(\E)=0$. An object $\E\in
\mathcal A$ is declared to be stable if every nontrivial subobject
$\F\subsetneq \E$ satisfies $\lambda(\F) < \lambda(\E/\F)$, and
semistable when nonstrict inequality is allowed. With this definition
one avoids the need to treat cases where $Z(\F)=0$ or $Z(\E/\F)=0$
separately. We will not need to go into detail.
\end{rem}

\subsection{Construction of stability conditions on threefolds}\label{sec:prelim:constr}

We next recall the ``double tilt'' construction of stability conditions
by Bayer--Macr\`i--Toda \cite{BMT2014}. For this it is necessary to
assume that the threefold $X$ satisfies a certain ``Bogomolov
inequality'' type condition \cite[Conjecture 4.1]{BMS2016}), which is
known in several cases including $\PP^3$ \cite{Mac2014}. Fix a
polarization $H$ on $X$; on $\PP^3$ this will be a (hyper)plane.

\subsubsection{Slope stability}

Let $\beta \in \RR$. The twisted Chern character of a sheaf or a complex
$\E$ on $X$ is defined by $\ch^\beta(\E) = e^{-\beta H}\ch(\E)$. Its
homogeneous components are
\begin{align*}
   \ch_0^\beta(\E)&=\ch_0(\E)=\rk(\E)\\
   \ch_1^\beta(\E)&=\ch_1(\E)-\beta H\ch_0(\E)\\
   \ch_2^\beta(\E)&=\ch_2(\E)-\beta H\ch_1(\E)+\frac{\beta^2 H^2}{2}\ch_0(\E)\\
   \ch_3^\beta(\E)&=\ch_3(\E)-\beta H\ch_2(\E)+\frac{\beta^2 H^2}{2}\ch_1(\E)-\frac{\beta^3 H^3}{6}\ch_0(\E).
\end{align*}

The twisted slope stability function on the abelian category $\Coh(X)$
of coherent sheaves is given by
\begin{equation}
   \mu_{\beta}(\E)=
   \begin{cases}
      \frac{H^{2}ch_1^{\beta}(\E)}{H^3 ch_0^{\beta}(\E)} &
      \text{if $\rk(\E)\neq 0$,}\\
      +\infty & \text{else.}
   \end{cases}
\end{equation}
This is the slope of a very weak stability condition. Notice that
$\mu_{\beta}(\E)=\mu(\E)-\beta$, where $\mu(\E)$ is the classical slope
stability function. A sheaf $\E \in \Coh(X)$ which is (semi)stable with
respect to this very weak stability condition is called
$\mu_\beta$-(semi)stable (or slope (semi)stable).

\subsubsection{Tilt stability}
Next, define the following full subcategories of $\Coh(X)$
\begin{align*}
   \mathcal{T}_{\beta}&=\{\E \in \Coh(X) \st
   \text{Any quotient $\E \twoheadrightarrow \G$ satisfies $\mu_{\beta}(\G) > 0$} \},\\	
   \mathcal{T}^\perp_{\beta}&=\{\E \in \Coh(X) \st
   \text{Any subsheaf $\F \subset \E$ satisfies $\mu_{\beta}(\F) \leq 0$} \}.
\end{align*}
The pair $(\mathcal{T}_{\beta},\mathcal{T}^\perp_{\beta}) $ is a torsion
pair (\cite[Definition 3.2]{Bri2008}) in $\Coh(X)$. Tilt the category
$\Coh(X)$ with respect to this torsion pair and denote the obtained
heart by
$\Coh^{\beta}(X)=\left<\mathcal{T}^\perp_\beta[1],\mathcal{T}_\beta\right>$.
Thus every object $\E \in \Coh^{\beta}(X)$ fits in a short exact
sequence
\begin{equation}\label{cohoseq}
   0 \to \coho^{-1}(\E)[1]  \to \E \to \coho^{0}(\E) \to 0
\end{equation}
with $\coho^{-1}(\E) \in \mathcal{T}^\perp_\beta$ and $\coho^{0}(\E) \in
\mathcal{T}_\beta$.

Let $(\alpha,\beta) \in \RR_{>0}\times\RR$, and let
\begin{equation}
   Z^{\mathrm{tilt}}_{\alpha,\beta}(\E)
   =-Hch_2^\beta(\E)+\frac{\alpha^2}{2}H^3 ch_0^\beta(\E)+iH^2ch_1^\beta(\E).
\end{equation}
The associated slope function is
\begin{equation*}
   \nu_{\alpha,\beta}(\E)=
   \begin{cases}
   \frac{Hch_2^\beta(\E)-\frac{\alpha^2}{2}H^3 ch_0^\beta(\E)}{ H^2 ch_1^\beta(\E)}
   & \text{if $H^2 ch_1^\beta(\E)\neq 0$,}\\
   +\infty & \text{else},
   \end{cases}
\end{equation*}
(see \cite[Section 9.1]{MS2017} for more details).

By \cite[Proposition B.2 (case $B=\beta H$)]{BMS2016}, the pair
$(\Coh^{\beta}(X),Z^{\mathrm{tilt}}_{\alpha,\beta})$ is a very weak
stability condition continuously parametrized by $(\alpha,\beta)\in
\RR_{>0}\times\RR$. An object $\E \in \Coh^\beta(X)$ which is
(semi)stable with respect to this very weak stability condition is
called $\nu_{\alpha,\beta}$-(semi)stable (or tilt (semi)stable).
Moreover the parameter space $\RR_{>0}\times \RR$ admits a wall and
chamber structure \cite[Proposition B.5]{BMS2016}, in which walls are
nested semicircles centered on the $\beta$-axis, or vertical lines (we
view $\alpha$ as the vertical axis) \cite [Theorem 3.3]{Sch2020}, and a
numerical wall is either an actual wall everywhere or nowhere. We refer
to them as ``tilt-stability walls" or ``$\nu$-walls" interchangeably.

\begin{rem}\label{rem:weakwalls}
Walls in the parameter space for tilt stability are defined analogously
to Definition \ref{numwallsactuwalls}. With tilt-stability in mind we
make the following observation: let $\E$ be strictly semistable with
respect to a very weak stability condition $(\mathcal A, Z)$. By
definition this means that $\E$ is semistable and there exists a short
exact sequence
\begin{equation*}
   0 \to \F \to \E \to \G \to 0
\end{equation*}
in $\mathcal A$ such that all three objects share the same slope. This
implies that $\F$ is semistable, since any destabilizing subobject of
$\F$ would also destabilize $\E$. When very weak stability conditions
are allowed, however, $\G$ may not be semistable: this happens exactly
when $\G$ has finite slope and there is a nontrivial subobject
$\G'\subset \G$ such that $Z(\G') = 0$. In this case let $\G'\subset \G$
be the maximal such subobject: then $\G/\G'$ is semistable (it is in
fact the final factor in the Harder--Narasimhan filtration of $\G$) and
has the same slope as $\G$. Moreover the kernel $\F'$ of the composite
$\E \to \G \to \G/\G'$ has the same slope as $\F$. Thus in the short
exact sequence
\begin{equation*}
   0 \to \F' \to \E \to \G/\G' \to 0
\end{equation*}
all objects are semistable and of the same slope. So when looking for
walls we may as in Definition \ref{numwallsactuwalls} assume all objects
in the defining short exact sequence to be semistable, even in the very
weak situation.
\end{rem}

The following is the Bogomolov inequality for tilt-stability:
\begin{pro}\cite[Corollary 7.3.2]{BMT2014}\label{BGtilt}
Any $\nu_{\alpha,\beta}$-semistable object $\E \in \Coh^\beta(X)$ satisfies
\begin{equation*}
   \overline{\Delta}_H(\E)=
   \big(H^2\ch^\beta_1(\E)\big)^2-2H^3\ch^\beta_0(\E)H\ch_2^\beta(\E) \geq 0.
\end{equation*}
\end{pro}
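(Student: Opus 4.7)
The plan is to reduce the inequality to the classical Bogomolov--Gieseker inequality for $\mu_H$-semistable torsion-free coherent sheaves on $X$, which holds for $X = \PP^3$. The first observation is that $\overline{\Delta}_H$ is independent of $\beta$: a direct expansion using the formulas for $\ch_i^\beta$ given above shows that the $\beta$-dependent terms cancel, so $\overline{\Delta}_H(\E) = (H^2 \ch_1(\E))^2 - 2 H^3 \ch_0(\E)\, H \ch_2(\E)$. Hence the conclusion is a purely numerical statement about the Chern character and one is free to choose $\beta$ for convenience.

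Given this, I would first treat the large volume limit $\alpha \to \infty$. Here the $-\frac{\alpha^2}{2}H^3\ch_0^\beta(\E)$ term dominates the numerator of $\nu_{\alpha,\beta}$, and using the structure of the tilted heart $\Coh^\beta(X) = \langle \mathcal{T}^\perp_\beta[1], \mathcal{T}_\beta \rangle$ one shows that any object tilt-semistable for $\alpha \gg 0$ is, up to shift, a $\mu_\beta$-semistable coherent sheaf (or a torsion sheaf, handled separately). The classical Bogomolov--Gieseker inequality then applies, giving $\overline{\Delta}_H(\E) \geq 0$ in this regime. To extend this to arbitrary $(\alpha,\beta) \in \RR_{>0}\times\RR$, I would use that the $\nu$-walls are nested semicircles, so a vertical ray starting from any $(\alpha_0,\beta_0)$ crosses only finitely many walls before entering the large-$\alpha$ regime. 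At each wall, the object $\E$ fits in a short exact sequence $0 \to \F \to \E \to \G \to 0$ with $\F, \G$ tilt-semistable of the same slope, and one inductively assumes $\overline{\Delta}_H(\F), \overline{\Delta}_H(\G) \geq 0$.

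The main obstacle is this induction step, since $\overline{\Delta}_H$ is not additive on short exact sequences. Passing from non-negativity of $\overline{\Delta}_H(\F)$ and $\overline{\Delta}_H(\G)$ to that of $\overline{\Delta}_H(\E)$ requires a Hodge-index type inequality: on the rank-two sublattice of the numerical Grothendieck group spanned by $[\F]$ and $[\G]$, the form $\overline{\Delta}_H$ has signature $(+,-)$, and the equal-slope condition at the wall forces both classes into the cone where the form is non-negative. Systematically this is packaged as the \emph{support property} of the tilt-stability condition, which, once verified at $\alpha \gg 0$, propagates across walls. Boundary cases where $\ch_0^\beta(\F) = 0$ or $H^2\ch_1^\beta(\F) = 0$ -- so that slopes become infinite or the quadratic form degenerates -- would need separate direct inspection from the definitions of $\Coh^\beta(X)$ and $\nu_{\alpha,\beta}$.
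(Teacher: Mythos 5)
The paper does not prove this statement at all: it is imported verbatim from \cite[Corollary 7.3.2]{BMT2014}, and Section \ref{sec:prelim} explicitly contains no original results. So the only meaningful comparison is with the proof in that reference, and your sketch does reconstruct its standard strategy: $\beta$-independence of $\overline{\Delta}_H$ (your cancellation computation is correct), reduction to the large volume limit, the classical Bogomolov--Gieseker inequality there, and propagation across nested $\nu$-walls via a support-property/convexity argument. The strategy is sound and nothing in it would fail, but three steps are only named rather than carried out. First, the large-volume classification is slightly coarser than you state: a $\nu_{\alpha,\beta}$-semistable object with $\alpha\gg 0$ need not be a sheaf or a shifted sheaf on the nose; it can have $\coho^{-1}(\E)$ a slope-semistable torsion-free sheaf and $\coho^{0}(\E)$ supported in dimension $\le 1$, and for such $\E$ one has $\overline{\Delta}_H(\E)=\overline{\Delta}_H(\coho^{-1}(\E))+2H^3\ch_0(\coho^{-1}(\E))\,H\ch_2(\coho^{0}(\E))$, which is nonnegative only after observing that $\ch_2$ of a sheaf supported in dimension $\le 1$ is an effective $1$-cycle. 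Second, the classical inequality gives $H\cdot(\ch_1^2-2\ch_0\ch_2)\ge 0$, and passing to the fully $H$-contracted form $\overline{\Delta}_H$ requires the Hodge index inequality $(H^2\ch_1)^2\ge H^3\cdot H\ch_1^2$; you invoke Hodge index only for the wall-crossing step, not here. Third, the induction step is exactly the statement that $\overline{\Delta}_H$ is negative semidefinite on $\ker Z^{\mathrm{tilt}}_{\alpha,\beta}$ (a one-line check: on the kernel $\overline{\Delta}_H=-\alpha^2(H^3\ch_0)^2$) combined with the standard lemma that a quadratic form nonnegative at $v$ and $w$ and nonpositive on the kernel line of their span satisfies $Q(v+w)\ge Q(v)+Q(w)$; stating this makes the ``signature $(+,-)$'' assertion precise and handles your boundary cases uniformly. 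With those three points filled in, your outline is a correct proof and coincides with the one in the cited literature.
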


\subsubsection{Bridgeland stability}

Define the following full subcategories of $\Coh^{\beta}(X)$:
\begin{align*}
   \mathcal{T}'_{\alpha,\beta}&=\{\E \in \Coh^\beta(X) \st \text{Any
   quotient $\E \twoheadrightarrow \G$ satisfies $\nu_{\alpha,\beta}(\G) >
   0$} \},\\	
   \mathcal{T}^{'\perp}_{\alpha,\beta}&=\{\E \in \Coh^\beta(X) \st
   \text{Any subsheaf $\F \subset \E$ satisfies $\nu_{\alpha,\beta}(\F)
   \leq 0$} \}.
\end{align*}
They form a torsion pair. Tilting $\Coh^{\beta}(X)$ with respect to this
pair yields stability conditions
$(\mathcal{A}_{\alpha,\beta}(X),Z_{\alpha, \beta,s})$ (\cite[Theorem
8.6, Lemma 8.8]{BMS2016}) on $X$, where
$\mathcal{A}_{\alpha,\beta}(X)=\left<\mathcal{T}^{'\perp}_{\alpha,\beta}[1],\mathcal{T}'_{\alpha,\beta}\right>$
and
\begin{equation}\label{eq:Z^s}
   Z_{\alpha, \beta,s}=
   -\ch_3^\beta+\alpha^2(\frac{1}{6} + s)H^2\ch_1^\beta
   +i(H\ch_2^\beta-\frac{\alpha^2}{2}H^3\ch_0^\beta).
\end{equation}
The slope function of $Z_{\alpha,\beta,s}$ is given by
\begin{equation*}
   \lambda_{\alpha,\beta,s}(\E)=
   \frac{\ch_3^\beta(\E)-\alpha^2(\frac{1}{6} + s)H^2\ch_1^\beta(\E)}
   {H\ch_2^\beta(\E)-\frac{\alpha^2}{2}H^3\ch_0^\beta(\E)}
\end{equation*}
with $\lambda_{\alpha,\beta,s}(\E)=+\infty$ when
$H\ch_2^\beta(\E)=\frac{\alpha^2}{2}H^3\ch_0^\beta(\E)$.

An object $\E \in \mathcal{A}_{\alpha, \beta}(X)$ which is (semi)stable
with respect to this stability condition is called
$\lambda_{\alpha,\beta,s}$-(semi)stable.
 
The stability conditions $(\mathcal{A}_{\alpha,\beta}(X),Z_{\alpha,
\beta,s})$ are continuously parametrized by $(\alpha,\beta,s) \in
\RR_{>0} \times \RR \times \RR_{>0}$ (\cite[Proposition 8.10]{BMS2016}).
We refer to walls in $\RR_{>0}\times\RR\times\RR_{>0}$ as
``$\lambda$-walls".

The following lemma allows us to identify moduli spaces of slope-stable
sheaves with moduli spaces of tilt-stable sheaves, given the right
conditions:
 
\begin{lem}[{\cite[Lemma 1.4]{GLS2018}}]\label{largevoltilt}
On $\PP^3$, let $v=(v_0,v_1,v_2,v_3) \in \Lambda$ satisfy $\mu_\beta(v)
> 0$ and assume $(v_0,v_1)$ is primitive.
Then an object $\E\in \Coh^\beta(\PP^3)$ with $\ch(\E)=v$ is
$\nu_{\alpha,\beta}$-stable for all $\alpha \gg 0$ if and only if $\E$
is a slope stable sheaf. 
\end{lem}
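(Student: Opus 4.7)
The plan is to prove both directions by analyzing subobjects of $\E$ in the tilted heart $\Coh^\beta(\PP^3)$ and exploiting the asymptotic behavior of $\nu_{\alpha,\beta}$: on an object of positive rank, $\nu_{\alpha,\beta}$ behaves like $-\alpha^2/(2\mu_\beta)+O(1)$ as $\alpha\to\infty$, so the asymptotic ordering of $\nu_{\alpha,\beta}$-slopes is dictated by the ordering of $\mu_\beta$-slopes.

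For the direction $(\Leftarrow)$, assume $\E$ is a slope-stable sheaf. Since $\mu_\beta(\E)>0$ and every quotient of $\E$ has $\mu$-slope at least $\mu(\E)>\beta$, we have $\E\in\mathcal{T}_\beta\subset\Coh^\beta(\PP^3)$. For a proper nonzero subobject $\F\subsetneq\E$ in $\Coh^\beta(\PP^3)$ with quotient $\G$, the long exact sequence of cohomology sheaves together with $\coho^{-1}(\E)=0$ shows that $\F$ is a sheaf containing $\F_2:=\coho^{-1}(\G)\in\mathcal{T}^\perp_\beta$ and projecting onto a nonzero subsheaf $\F_1:=\F/\F_2\subset\E$, where nonzero follows from $\mathcal{T}_\beta\cap\mathcal{T}^\perp_\beta=0$. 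Slope-stability of $\E$ applied to $\F_1$, together with $\mu_\beta(\F_2)\leq 0<\mu_\beta(\E)$, yields $\mu_\beta(\F)\leq\mu_\beta(\E)$, and when strict the leading $\alpha^2$ terms give $\nu_{\alpha,\beta}(\F)<\nu_{\alpha,\beta}(\G)$ for $\alpha\gg 0$. Equality $\mu_\beta(\F)=\mu_\beta(\E)$ forces $\F_2=0$ and $\F=\F_1\subset\E$ to share the same $(\ch_0,\ch_1)$ as $\E$, so $\G$ is supported in dimension $\leq 1$; hence the imaginary part of $Z^{\mathrm{tilt}}_{\alpha,\beta}(\G)$ vanishes and by the convention of Remark \ref{rem:weakwalls} we have $\nu_{\alpha,\beta}(\G)=+\infty>\nu_{\alpha,\beta}(\F)$. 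A uniform threshold $\alpha_0$ for all such subobjects then follows from the locally finite nested-semicircle wall structure of \cite[Theorem 3.3]{Sch2020}.

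For the direction $(\Rightarrow)$, assume $\E$ is $\nu_{\alpha,\beta}$-stable for all $\alpha\gg 0$. If $\coho^{-1}(\E)\neq 0$, the canonical subobject $\coho^{-1}(\E)[1]\subset\E$ has $\nu_{\alpha,\beta}\to+\infty$ as $\alpha\to\infty$ using $\coho^{-1}(\E)\in\mathcal{T}^\perp_\beta$, contradicting stability; similarly any torsion subsheaf of $\E$ would have $\nu_{\alpha,\beta}=+\infty$ and destabilize. So $\E$ is a torsion-free sheaf in $\mathcal{T}_\beta$. If $\E$ were not slope-stable, primitivity of $(v_0,v_1)$ rules out a same-slope subsheaf $\F$ of strictly smaller rank, since any such $\F$ would force $(\ch_0(\F),\ch_1(\F))=\lambda(v_0,v_1)$ with $\lambda\in\ZZ_{\geq 1}$, incompatible with $\ch_0(\F)<v_0$. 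Hence the maximal destabilizing subsheaf $\F\subset\E$ must satisfy $\mu(\F)>\mu(\E)$, making $\F$ $\mu$-semistable with $\mu_\beta(\F)>0$ and therefore in $\mathcal{T}_\beta$; the corresponding quotient $\G$ in $\Coh^\beta(\PP^3)$ has $\mu_\beta(\G)<\mu_\beta(\E)<\mu_\beta(\F)$, so the leading $\alpha^2$ terms yield $\nu_{\alpha,\beta}(\F)>\nu_{\alpha,\beta}(\G)$ for $\alpha\gg 0$, contradicting stability.

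The main obstacle is the careful bookkeeping of subobjects in the tilted heart, since a subobject of $\E\in\Coh^\beta(\PP^3)$ is not in general a subsheaf but a two-step extension with a piece in $\mathcal{T}^\perp_\beta$; the long exact sequence makes this tractable. The edge case where $Z^{\mathrm{tilt}}_{\alpha,\beta}$ has vanishing imaginary part on the quotient is handled by the very-weak convention, and the hypothesis that $(v_0,v_1)$ be primitive enters precisely to rule out the same-slope strictly-smaller-rank subsheaf that would otherwise obstruct the converse.
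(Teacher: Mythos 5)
The paper does not prove this lemma at all --- it is quoted verbatim from Gallardo--Lozano Huerta--Schmidt \cite[Lemma 1.4]{GLS2018} --- so there is no in-paper argument to compare against; your proof is the standard large-volume-limit argument and is essentially the one given in that reference. The reasoning is correct: the cohomology long exact sequence reduces subobjects in $\Coh^\beta(\PP^3)$ to extensions of a subsheaf of $\E$ by an object of $\mathcal T^\perp_\beta$, the leading $\alpha^2$-coefficient $-\ch_0/\ch_1^\beta$ governs the asymptotic slope comparison, primitivity of $(v_0,v_1)$ kills same-slope subsheaves of intermediate rank, and the uniform threshold is legitimately delegated to the nested-wall structure of \cite[Theorem 3.3]{Sch2020}. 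One small imprecision: a torsion subsheaf supported on a divisor has \emph{finite} (constant in $\alpha$) tilt slope rather than $+\infty$, but it still destabilizes because $\nu_{\alpha,\beta}$ of the positive-rank quotient tends to $-\infty$, so the conclusion stands.
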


\subsection{Comparison between $\nu$-stability and $\lambda$-stability --- after Schmidt}\label{sec:prelim:comparison}

Let $\E$ be an object in $D^b(X)$. Throughout this section, let
$(\alpha_0,\beta_0)\in \RR_{>0}\times \RR$ satisfy
$\nu_{\alpha_0,\beta_0}(\E)=0$, and fix $s>0$. We shall summarize a
series of results by Schmidt \cite{Sch2020} enabling us to compare walls
and chambers with respect to $\nu$-stability with those of
$\lambda$-stability. (Looking ahead to our application illustrated in
Figures \ref{fig:nuwalls} and \ref{fig:lambdawalls}, the dashed
hyperbola is the solution set to $\nu_{\alpha,\beta}(\E) = 0$.)

Consider the following conditions on $\E$:
\begin{enumerate}
   \item $\E$ is a $\nu_{\alpha_0,\beta_0}$-stable object in
   $\Coh^{\beta_0}(X)$.
   \item $\E$ is a $\lambda_{\alpha,\beta,s}$-stable object in $\mathcal
   A^{\alpha,\beta}(X)$, for all $(\alpha,\beta)$ in an open
   neighborhood of $(\alpha_0,\beta_0)$ with
   $\nu_{\alpha,\beta}(\E) > 0$.
   \item $\E$ is a $\lambda_{\alpha,\beta,s}$-semistable object in
   $\mathcal A^{\alpha,\beta}(X)$, for all $(\alpha,\beta)$ in an open
   neighborhood of $(\alpha_0,\beta_0)$ with $\nu_{\alpha,\beta}(\E) >
   0$. \item $\E$ is a $\nu_{\alpha_0,\beta_0}$-semistable object in
   $\Coh^{\beta_0}(X)$.
\end{enumerate}
Obviously there are implications (1) $\implies$ (4) and (2) $\implies$
(3). The following says that, under a mild condition on $\ch(\E)$, there
are in fact implications
\begin{equation*}
   (1) \implies (2) \implies (3) \implies (4)
\end{equation*}
so that $\lambda$-stability in a certain sense refines $\nu$-stability:

\begin{thm}[Schmidt]\label{thm:refine}
The implication $(1) \implies (2)$ above always holds.
If $H^2\ch^{\beta_0}_1(\E) > 0$ and $\overline{\Delta}_H(\E)>0$ then
also the implication $(3) \implies (4)$ holds.
\end{thm}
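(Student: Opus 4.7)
The plan is to exploit the geometric observation that on the curve $\nu_{\alpha,\beta}(\E) = 0$ the imaginary part $H\ch_2^\beta(\E) - \tfrac{\alpha^2}{2}H^3\ch_0^\beta(\E)$ of $Z_{\alpha,\beta,s}(\E)$ vanishes, so $\lambda_{\alpha,\beta,s}(\E)$ blows up at $(\alpha_0,\beta_0)$. This ``vertical central charge'' phenomenon controls how $\lambda$-stability in an open region of the plane degenerates to $\nu$-stability on its boundary, and both implications amount to unpacking it carefully.

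For the implication $(1) \Rightarrow (2)$, I would first check that $\E$ actually sits in $\mathcal A_{\alpha,\beta}(X)$ on the neighborhood in question: local finiteness of tilt walls ensures $\E\in\Coh^\beta(X)$ for $\beta$ close to $\beta_0$, and then $\nu_{\alpha,\beta}(\E)>0$ forces $\E\in\mathcal T'_{\alpha,\beta}\subset\mathcal A_{\alpha,\beta}(X)$. Assume, toward contradiction, a destabilizing sequence $0\to\F\to\E\to\G\to 0$ in $\mathcal A_{\alpha,\beta}$ for $(\alpha,\beta)$ arbitrarily close to $(\alpha_0,\beta_0)$. Applying the tilt cohomology functors associated with $\Coh^{\beta_0}(X)$ and using that $\coho^{-1}(\E) = 0$ since $\E\in\Coh^{\beta_0}(X)$, one extracts a short exact sequence in $\Coh^{\beta_0}(X)$ whose outer terms are tilt-semistable with appropriate $\nu_{\alpha_0,\beta_0}$-slopes. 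Taking limits as $(\alpha,\beta)\to(\alpha_0,\beta_0)$ and using that $\Im Z_{\alpha_0,\beta_0,s}(\E)=0$, the comparison of the $\lambda$-inequality forces one of these factors to have $\nu_{\alpha_0,\beta_0}$-slope $\geq 0 = \nu_{\alpha_0,\beta_0}(\E)$, contradicting $\nu_{\alpha_0,\beta_0}$-stability of $\E$.

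For the implication $(3) \Rightarrow (4)$, I would argue contrapositively. Suppose $\E$ is not $\nu_{\alpha_0,\beta_0}$-semistable and take a destabilizing subobject $\F\subsetneq \E$ in $\Coh^{\beta_0}(X)$ (e.g.\ the first Harder--Narasimhan factor) with $\nu_{\alpha_0,\beta_0}(\F) > 0$. The hypothesis $H^2\ch_1^{\beta_0}(\E) > 0$ pins down the orientation of the hyperbola $\nu(\E) = 0$ and guarantees $\Im Z_{\alpha,\beta,s}(\E) > 0$ on the $\nu(\E)>0$ side. For $(\alpha,\beta)$ in a neighborhood, $\F$ and $\E$ both lie in $\mathcal A_{\alpha,\beta}(X)$ with $\F\hookrightarrow \E$, so $\lambda$-semistability of $\E$ would demand $\lambda_{\alpha,\beta,s}(\F) \leq \lambda_{\alpha,\beta,s}(\E)$ throughout. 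I then analyze asymptotics as $(\alpha,\beta) \to (\alpha_0,\beta_0)$ along a well-chosen curve inside $\nu(\E)>0$: since $\nu_{\alpha_0,\beta_0}(\F) > 0$, $\lambda_{\alpha,\beta,s}(\F)$ stays finite, while $\lambda_{\alpha,\beta,s}(\E)$ diverges. The strict Bogomolov inequality $\overline{\Delta}_H(\E) > 0$ is what rules out the accidental vanishing of $\Re Z_{\alpha_0,\beta_0,s}(\E)$ and lets one choose the path of approach so that $\lambda_{\alpha,\beta,s}(\E) \to -\infty$; this yields the needed strict inequality $\lambda_{\alpha,\beta,s}(\F) > \lambda_{\alpha,\beta,s}(\E)$ and contradicts (3).

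I expect the directionality analysis in the last step to be the main obstacle. Without the two numerical hypotheses one could imagine the central charge $Z_{\alpha_0,\beta_0,s}(\E)$ vanishing, or $\lambda_{\alpha,\beta,s}(\E)$ diverging to $+\infty$ along every admissible path, or the ``natural'' inclusion of the destabilizing $\F$ into $\E$ failing to survive the passage from $\Coh^{\beta_0}$ to $\mathcal A_{\alpha,\beta}$. Organizing the tilt cohomology bookkeeping to see that the inclusion persists, and picking a curve of approach on which the signs cooperate, is where the real work lies; the first implication, by contrast, is a largely formal consequence of continuity and the tilting construction.
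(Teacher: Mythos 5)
First, note that the paper does not prove this statement: it is quoted from Schmidt, with the first implication referred to \cite[Lemma 6.2]{Sch2020} and the second to Lemmas 6.3 and 6.4 of the same paper. So the relevant comparison is with Schmidt's argument, and against that your outline has two genuine gaps, both in the implication $(3)\Rightarrow(4)$. The first is that you argue the contrapositive only in the case $\E\in\Coh^{\beta_0}(X)$, where you can pick a destabilizing subobject $\F$. But condition (4) asserts \emph{membership in the heart} $\Coh^{\beta_0}(X)$ as well as semistability, so its negation includes the case $\E\notin\Coh^{\beta_0}(X)$; showing that $\lambda_{\alpha,\beta,s}$-semistability on the $\nu$-positive side forces $\E$ (up to shift) into $\Coh^{\beta_0}(X)$ is precisely the content of Schmidt's Lemma 6.3, and it is there --- in controlling the tilt-cohomologies $\coho^{-1}(\E)$ and $\coho^{0}(\E)$ --- that the hypotheses $H^2\ch_1^{\beta_0}(\E)>0$ and $\overline{\Delta}_H(\E)>0$ do their real work. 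Your sketch omits this case entirely.

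The second gap is the role you assign to $\overline{\Delta}_H(\E)>0$. You want it to rule out the vanishing of $\Re Z_{\alpha_0,\beta_0,s}(\E)$ and to pin down its sign so that $\lambda_{\alpha,\beta,s}(\E)\to-\infty$. But $\Re Z_{\alpha_0,\beta_0,s}(\E)=-\ch_3^{\beta_0}(\E)+\alpha_0^2(\tfrac{1}{6}+s)H^2\ch_1^{\beta_0}(\E)$ depends on $\ch_3(\E)$, which $\overline{\Delta}_H$ (built from $\ch_0,\ch_1,\ch_2$ only) does not see; nothing prevents $\lambda_{\alpha,\beta,s}(\E)$ from diverging to $+\infty$, in which case your comparison with the finite $\lambda_{\alpha,\beta,s}(\F)$ gives no contradiction --- a failure mode you yourself flag but do not resolve. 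The correct mechanism in the in-heart case does not compare $\lambda(\F)$ with $\lambda(\E)$ at all: since $\Im Z_{\alpha,\beta,s}(\E)=\nu_{\alpha,\beta}(\E)\,H^2\ch_1^{\beta}(\E)\to 0^{+}$ while $\Im Z_{\alpha,\beta,s}(\F)$ tends to the positive limit $\nu_{\alpha_0,\beta_0}(\F)\,H^2\ch_1^{\beta_0}(\F)$, the quotient $\E/\F$ acquires $\Im Z<0$ near the hyperbola, so it cannot be an object of $\mathcal A_{\alpha,\beta}(X)$; equivalently $\E\notin\mathcal T'_{\alpha,\beta}$ and hence $\E\notin\mathcal A_{\alpha,\beta}(X)$, so (3) already fails for membership reasons. (A smaller but related slip occurs in your treatment of $(1)\Rightarrow(2)$: $\nu_{\alpha,\beta}(\E)>0$ alone does not place $\E$ in $\mathcal T'_{\alpha,\beta}$ --- one needs every quotient to have positive tilt slope, which is where openness of $\nu$-stability of $\E$ must be invoked.)
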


For the proof we refer to Schmidt \cite{Sch2020}: the first implication
follows from Lemma 6.2 in \emph{loc.\ cit.} and the second follows from
Lemmas 6.3 and 6.4.

Schmidt furthermore compares walls for $\nu$-stability and
$\lambda$-stability, for objects $\E$ in some fixed class $v\in
\Lambda$. Let
\begin{equation}\label{triangle}
   \F \to \E \to \G \to \F[1]
\end{equation}
be a triangle in $D^b(X)$ with $\E$ in class $v$.

\begin{itemize}
   \item
   Say that \eqref{triangle} \emph{defines a $\nu$-wall} through
   $(\alpha_0,\beta_0)$ if $\F$,$\E$,$\G$ are
   $\nu_{\alpha_0,\beta_0}$-semistable objects in $\Coh^{\beta_0}(X)$ and
   $\nu_{\alpha_0,\beta_0}(\F) = \nu_{\alpha_0,\beta_0}(\G)$ (which is thus
   zero).
   \item
   Say that \eqref{triangle} \emph{defines a $\lambda$-wall at the
   $\nu$-positive side of $(\alpha_0,\beta_0)$} if there is an open
   neighborhood $U$ of $(\alpha_0,\beta_0)$ such that, writing
   \begin{equation*}
      W = \{ (\alpha,\beta) \in U \;\mid\;
      \text{$\nu_{\alpha,\beta}(v) > 0$ and
      $\lambda_{\alpha,\beta,s}(\F) = \lambda_{\alpha,\beta,s}(\G)$} \}
   \end{equation*}
   the following holds: $(\alpha_0,\beta_0)$ is in the closure of $W$ and
   $\F,\E,\G$ are $\lambda_{\alpha,\beta,s}$-semistable objects in
   $\mathcal{A}^{\alpha,\beta}(X)$ for all $(\alpha,\beta)\in W$.
\end{itemize}

Note that the assumption that $\F,\E,\G$ are all in $\Coh^{\beta_0}(X)$
or in $\mathcal A^{\alpha,\beta}(X)$ implies that the triangle
\eqref{triangle} is a short exact sequence
\begin{equation*}
   0 \to \F \to \E \to \G \to 0
\end{equation*}
in that abelian category.

\begin{thm}[Schmidt]\label{thm:schmidt}
Let $\E$ be an object in $D^b(X)$ satisfying $\overline{\Delta}_H(\E)>0$
and let $(\alpha_0,\beta_0)\in\RR_{>0}\times \RR$ such that
$\nu_{\alpha_0,\beta_0}(\E) = 0$ and $\ch^{\beta_0}_1(\E) > 0$.
\begin{enumerate}
   \item If a triangle \eqref{triangle} defines a $\lambda$-wall on the
   $\nu$-positive side of $(\alpha_0,\beta_0)$, then it also defines a
   $\nu$-wall through $(\alpha_0,\beta_0)$. \item Suppose a triangle
   \eqref{triangle} defines a $\nu$-wall through $(\alpha_0,\beta_0)$
   and $\F$, $\G$ are $\nu_{\alpha_0,\beta_0}$-stable. Moreover let
   \begin{equation*}
      W = \{ (\alpha,\beta) \;\mid\; \text{$\nu_{\alpha,\beta}(v) > 0$ and
      $\lambda_{\alpha,\beta,s}(\F) = \lambda_{\alpha,\beta,s}(\G)$} \}
   \end{equation*}
   and suppose there are points $(\alpha,\beta)\in W$ arbitrarily close
   to $(\alpha_0,\beta_0)$ such that $\nu_{\alpha,\beta}(\F) > 0$ and
   $\nu_{\alpha,\beta}(\G) > 0$. Then \eqref{triangle} defines a
   $\lambda$-wall on the $\nu$-positive side of $(\alpha_0,\beta_0)$,
   namely $W$.
\end{enumerate}
\end{thm}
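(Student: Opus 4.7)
The plan is to establish the two directions separately, in both cases using Theorem \ref{thm:refine} to transfer (semi)stability between the $\nu$- and $\lambda$-frameworks on individual objects in the triangle.

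For part (1), I would choose a sequence $(\alpha_n,\beta_n) \in W$ with $(\alpha_n,\beta_n) \to (\alpha_0,\beta_0)$, which exists since $(\alpha_0,\beta_0)$ lies in the closure of $W$. At each $(\alpha_n,\beta_n)$, the triangle is a short exact sequence in $\mathcal{A}^{\alpha_n,\beta_n}(X)$ with $\F,\E,\G$ all $\lambda_{\alpha_n,\beta_n,s}$-semistable of common $\lambda$-slope. I would apply the implication $(3)\implies(4)$ of Theorem \ref{thm:refine} to each of the three objects in order to upgrade their $\lambda$-semistability to $\nu_{\alpha_n,\beta_n}$-semistability in $\Coh^{\beta_n}(X)$. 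For $\E$ the required hypotheses transfer directly from the theorem; for $\F$ and $\G$ one deduces $H^2\ch^{\beta_n}_1 > 0$ from additivity of $\Im Z^{\mathrm{tilt}}_{\alpha_n,\beta_n}$ along the triangle (together with the fact that $\E$ has finite, positive imaginary part on the $\nu$-positive side) and one deduces $\overline{\Delta}_H > 0$ from the standard extension inequality $\overline{\Delta}_H(\E) \geq \overline{\Delta}_H(\F) + \overline{\Delta}_H(\G)$ combined with Proposition \ref{BGtilt}.

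Having $\nu_{\alpha_n,\beta_n}$-semistability along the sequence, I would use the closedness of $\nu$-semistability (a consequence of the nested semicircle wall structure of tilt-stability recalled in Section \ref{sec:prelim:constr}) to pass to the limit and deduce $\nu_{\alpha_0,\beta_0}$-semistability of $\F,\E,\G$ in $\Coh^{\beta_0}(X)$. For the slope identity I would rewrite $\lambda(\F) = \lambda(\G)$ in cross-multiplied form
\begin{equation*}
   \Re Z_{\alpha,\beta,s}(\F)\,\Im Z_{\alpha,\beta,s}(\G)
   = \Re Z_{\alpha,\beta,s}(\G)\,\Im Z_{\alpha,\beta,s}(\F),
\end{equation*}
take the limit $n \to \infty$, and exploit $\nu_{\alpha_0,\beta_0}(\E) = 0$ together with $\Im Z^{\mathrm{tilt}}_{\alpha_0,\beta_0}(\E) > 0$ to isolate $\nu_{\alpha_0,\beta_0}(\F) = \nu_{\alpha_0,\beta_0}(\G) = 0$.

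For part (2), applying $(1) \implies (2)$ of Theorem \ref{thm:refine} to $\F$ and $\G$ separately yields $\lambda_{\alpha,\beta,s}$-stability of each on an open neighborhood of $(\alpha_0,\beta_0)$ intersected with $\{\nu > 0\}$. On $W$ the $\lambda$-slopes of $\F$ and $\G$ agree by definition, so it remains to verify that the triangle is a short exact sequence in $\mathcal{A}^{\alpha,\beta}(X)$ for $(\alpha,\beta) \in W$ near $(\alpha_0,\beta_0)$ with $\nu_{\alpha,\beta}(\F), \nu_{\alpha,\beta}(\G) > 0$; under this positivity $\F, \G \in \mathcal{T}'_{\alpha,\beta}$, so the triangle is an extension in $\mathcal{A}^{\alpha,\beta}(X)$ and $\E$ is $\lambda$-semistable as an extension of two $\lambda$-stable objects of equal slope, which is what the wall definition requires. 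The main obstacle will be the limit argument of part (1): one must rule out that $\F$ or $\G$ degenerate (for instance by acquiring infinite $\nu$-slope in the limit) and ensure that the cross-multiplied $\lambda$-slope identity genuinely descends to the $\nu$-slope identity on the boundary. This is precisely where the hypotheses $H^2\ch^{\beta_0}_1(\E) > 0$ and $\overline{\Delta}_H(\E) > 0$ enter, in the same role they play in Schmidt's Lemmas 6.3 and 6.4 of \cite{Sch2020}.
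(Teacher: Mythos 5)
First, a point of comparison: the paper does not prove this theorem at all. It is imported from Schmidt, with part (1) cited as \cite{Sch2020}, Theorem 6.1(1) (made applicable via Lemma 6.3 there) and part (2) as the case $n=1$ of Theorem 6.1(4). So your proposal cannot be matched against an in-paper argument and must stand on its own. Part (2) of your sketch is essentially the right mechanism --- apply $(1)\implies(2)$ of Theorem \ref{thm:refine} to $\F$ and $\G$ separately, then observe that wherever $\nu_{\alpha,\beta}(\F)>0$ and $\nu_{\alpha,\beta}(\G)>0$ the triangle becomes a short exact sequence in $\mathcal A^{\alpha,\beta}(X)$ of equal-slope $\lambda$-stable objects, so $\E$ is $\lambda$-semistable there. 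The residual issue is that the definition of ``defines a $\lambda$-wall'' demands semistability on \emph{all} of $W$ inside some neighborhood $U$, whereas your argument only covers the sublocus where both $\nu$-slopes are positive; the hypothesis merely guarantees such points exist arbitrarily close to $(\alpha_0,\beta_0)$.

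Part (1) has more serious gaps. (i) The implication $(3)\implies(4)$ of Theorem \ref{thm:refine} requires $\lambda$-semistability on a full open neighborhood of $(\alpha_0,\beta_0)$ intersected with $\{\nu_{\alpha,\beta}>0\}$; the hypothesis of part (1) only gives semistability along the codimension-one locus $W$, so the theorem as recorded in the paper does not apply to your sequence $(\alpha_n,\beta_n)\in W$. The statement actually needed is a sequential one (Schmidt's Lemma 6.4), which is strictly stronger than what Theorem \ref{thm:refine} asserts. (ii) The deduction of $H^2\ch_1^{\beta_n}(\F)>0$ from additivity fails: $\F$ and $\G$ are objects of the doubly tilted heart $\mathcal A^{\alpha_n,\beta_n}(X)$, an object of which is an extension of $F[1]$ by $T$ with $F,T\in\Coh^{\beta_n}(X)$, so its $\ch_1^{\beta_n}$ is a \emph{difference} of two nonnegative numbers and can be negative (e.g.\ $\OO_{\PP^3}(-3)[1]$ for suitable $(\alpha,\beta)$). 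The quantity that is additive with nonnegative summands on $\mathcal A^{\alpha,\beta}$ is $\Im Z_{\alpha,\beta,s}=H\ch_2^\beta-\tfrac{\alpha^2}{2}H^3\ch_0^\beta$, not $H^2\ch_1^\beta$. (iii) The inequality $\overline{\Delta}_H(\E)\ge\overline{\Delta}_H(\F)+\overline{\Delta}_H(\G)$ is not formal: it holds when $\F$ and $\G$ are tilt-semistable with proportional tilt central charges, which is precisely the conclusion you are trying to establish, so as invoked it is circular. These three points are exactly where Schmidt's Lemmas 6.3 and 6.4 do the real work, and your sketch does not reproduce it.
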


For the proof we refer to Schmidt \cite{Sch2020}: part (1) is Schmidt's
Theorem 6.1(1) and part (2) is the special case $n=1$ of Schmidt's
Theorem 6.1(4). To align the notation, in part (1) Schmidt's $\F,\E,\G$
are our $\F[1],\E[1],\G[1]$. To apply Theorem 6.1(1) these are required
to be $\lambda_{\alpha_0,\beta_0,s}$-semistable objects in $\mathcal
A^{\alpha_0,\beta_0}(X)$; this is ensured by Schmidt's Lemma 6.3.

To control how the set of stable objects changes as a
$\lambda$-wall is crossed, we take advantage of the fact that the
$\lambda$-walls we obtain are defined by short exact sequences with
stable sub- and quotient objects (in other words, only two
Jordan--H\"older factors on the wall) and apply:

\begin{pro}\label{prop:crossing}
Suppose $\F$ and $\G$ are $\lambda_{\alpha,\beta,s}$-stable objects in
$\mathcal A^{\alpha,\beta}(X)$. Then there is a neighborhood $U$ of
$(\alpha,\beta)$ such that for all $(\alpha',\beta')\in U$ and all
nonsplit extensions
\begin{equation*}
   0 \to \F \to \E \to \G \to 0
\end{equation*}
the object $\E$ is $\lambda_{\alpha',\beta',s}$-stable if and only if
$\lambda_{\alpha',\beta',s}(\F) < \lambda_{\alpha',\beta',s}(\G)$.
\end{pro}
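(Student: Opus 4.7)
The plan is to combine openness of $\lambda$-stability with a case analysis of potential destabilizing subobjects of $\E$. First, openness of stability yields a neighborhood $U_1$ of $(\alpha,\beta)$ on which $\F$ and $\G$ remain $\lambda_{\alpha',\beta',s}$-stable, and continuity of the torsion pairs in the double-tilt construction lets us shrink to $U\subseteq U_1$ so that $\E$ still lies in the heart $\mathcal{A}^{\alpha',\beta'}(X)$ for every $(\alpha',\beta')\in U$. Additivity of the central charge gives $Z_{\alpha',\beta',s}(\E)=Z_{\alpha',\beta',s}(\F)+Z_{\alpha',\beta',s}(\G)$, so the standard mediant property places $\lambda_{\alpha',\beta',s}(\E)$ (weakly) between $\lambda_{\alpha',\beta',s}(\F)$ and $\lambda_{\alpha',\beta',s}(\G)$, strictly so when the two differ. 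The ``only if'' direction is then immediate: if $\lambda_{\alpha',\beta',s}(\F)\geq\lambda_{\alpha',\beta',s}(\G)$ then $\lambda_{\alpha',\beta',s}(\F)\geq\lambda_{\alpha',\beta',s}(\E)$, and the inclusion $\F\hookrightarrow\E$ obstructs stability.

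For the ``if'' direction, assume $\lambda_{\alpha',\beta',s}(\F)<\lambda_{\alpha',\beta',s}(\G)$ and take a proper nonzero subobject $\E''\subset\E$ in $\mathcal{A}^{\alpha',\beta'}(X)$. The composition $\E''\hookrightarrow\E\twoheadrightarrow\G$ yields a short exact sequence $0\to K\to\E''\to I\to 0$ with $K=\E''\cap\F\subseteq\F$ and $I\subseteq\G$. If $I=0$, then $\E''\subseteq\F$, and stability of $\F$ combined with the mediant gives $\lambda(\E'')\leq\lambda(\F)<\lambda(\E)$. If $K=0$ and $I=\G$, the isomorphism $\E''\cong\G$ provides a section of $\E\twoheadrightarrow\G$, contradicting the nonsplit hypothesis. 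In all remaining cases at least one of $K\subsetneq\F$ or $I\subsetneq\G$ is proper with nonzero source, so stability of $\F$ or $\G$ yields a strict bound $\lambda(K)<\lambda(\F)$ or $\lambda(I)<\lambda(\G)$; the mediant inequality applied to $0\to K\to\E''\to I\to 0$ is then intended to deliver $\lambda(\E'')<\lambda(\E)$.

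The delicate point is that this last mediant directly gives only $\lambda(\E'')\leq\lambda(\G)$, not the required $\lambda(\E'')<\lambda(\E)$; an extra step is needed to exclude subobjects whose image $I\subsetneq\G$ has slope in the range $\bigl(\lambda(\E),\lambda(\G)\bigr)$. At the central point $(\alpha,\beta)$ the strict inequality $\lambda_{\alpha,\beta,s}(\E'')<\lambda_{\alpha,\beta,s}(\E)$ follows from the stability of $\F,\G$ (collapsing to $\lambda_{\alpha,\beta,s}(\E'')<\lambda_*$ when $(\alpha,\beta)$ lies on a wall with common slope $\lambda_*$), and the main obstacle is to propagate this uniformly. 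The resolution is a boundedness argument: by the Bogomolov--Gieseker inequality of Proposition \ref{BGtilt} and local finiteness of the wall-and-chamber structure on $\stab_\Lambda(X)$, only finitely many numerical classes in $\Lambda$ can arise as $[\E'']$ for a candidate destabilizer; continuity of the slope function propagates the strict inequality at $(\alpha,\beta)$ to an open neighborhood for each such class, and the finite intersection of these gives the desired uniform $U$.
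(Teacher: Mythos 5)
First, be aware that the paper does not prove this proposition itself: it is quoted from Schmidt \cite[Lemma 3.11]{Sch2020} and Bayer--Macr\`i \cite[Lemma 5.9]{BM2011}, so you are reconstructing a cited argument. Your first two paragraphs are sound: the ``only if'' direction via the mediant, the reduction of the ``if'' direction to a subobject $\E''$ with kernel $K\subseteq\F$ and image $I\subseteq\G$, and the disposal of the cases $I=0$ and $K=0,\ I=\G$ are all correct, and you correctly flag that the mediant alone does not control the remaining cases. The problem is that your proposed repair in the third paragraph does not close the gap. The claim that $\lambda_{\alpha,\beta,s}(\E'')<\lambda_{\alpha,\beta,s}(\E)$ holds at the central point for every proper nonzero subobject is false: when $(\alpha,\beta)$ lies on the wall, $\F\subset\E$ itself has slope \emph{equal} to $\lambda_{\alpha,\beta,s}(\E)$, and nothing a priori rules out subobjects of other classes with slope equal to, or even greater than, $\lambda_{\alpha,\beta,s}(\E)$ at the center. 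Your finiteness-plus-continuity scheme only propagates a \emph{strict} inequality at $(\alpha,\beta)$; for the classes where the inequality is non-strict or reversed at the center --- which are precisely the dangerous ones --- continuity gives nothing.

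Two ingredients are missing. (a) A Jordan--H\"older argument at the wall point: since $\E$ is there an extension of the two stable objects $\F,\G$ of equal slope, it is semistable of length two in the (finite-length abelian) category of semistable objects of that slope, so its only proper nonzero subobject of maximal slope is $\F$ (a subobject isomorphic to $\G$ would split the sequence or embed into $\F$, both impossible); the class of any other candidate destabilizer therefore has strictly smaller slope at the center, and only then does your continuity step apply. (b) A transfer statement: your destabilizing $\E''$ is a subobject of $\E$ in the heart $\mathcal A^{\alpha',\beta'}(X)$, not in $\mathcal A^{\alpha,\beta}(X)$, so one must argue (as in Bridgeland's local-finiteness proof, via the quasi-abelian slices $\mathcal P((\phi-\epsilon,\phi+\epsilon))$ and the support property) that for $(\alpha',\beta')$ close to $(\alpha,\beta)$ such a subobject yields a subobject-with-comparable-slope at $(\alpha,\beta)$ itself, before (a) can be invoked. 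These two steps are exactly what the proofs of Schmidt's Lemma 3.11 and Bayer--Macr\`i's Lemma 5.9 supply; without them your argument establishes only the ``only if'' direction and the two easy cases of the ``if'' direction.
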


This result is stated and proved (for arbitrary Bridgeland stability
conditions) in Schmidt \cite[Lemma 3.11]{Sch2020}, and credited there
also to Bayer--Macr\`i \cite[Lemma 5.9]{BM2011}.


\section{Wall and chamber structure}\label{sec:walls}

The starting point for the entire discussion that follows is a simple
minded observation. Namely, let $V\subset \PP^3$ be a plane and let $Y$
be the union of a conic in $V$ and a point $P$ also in $V$. Then there
is a short exact sequence
\begin{equation}\label{eq:extension1}
   0 \to \OO_{\PP^3}(-1) \to \I_Y \to \I_{P/V}(-2) \to 0
\end{equation}
(read $\OO_{\PP^3}(-1)$ as the ideal of $V$ and $\I_{P/V}(-2)$ as the
relative ideal of $Y\subset V$). If we instead let $Y$ be the union of a
conic in $V$ and a point $P$ outside of $V$ then there is a short exact
sequence
\begin{equation}\label{eq:extension2}
   0 \to \I_P(-1) \to \I_Y \to \OO_V(-2) \to 0
\end{equation}
(read $\I_P(-1)$ as the ideal of $\{P\} \cup V$ and $\OO_V(-2)$ as the
relative ideal of a conic in $V$). The claim is that in a certain region
of the stability manifold of $\PP^3$, there are exactly two walls with
respect to the Chern character $\ch(\I_Y) = (1,0,-2,2)$, and they are
defined precisely by the two pairs of sub and quotient objects appearing
in the short exact sequences \eqref{eq:extension1} and
\eqref{eq:extension2}.

Mimicking Schmidt's work for twisted cubics (and their deformations), we
argue via tilt stability. Since $\nu_{\alpha,\beta}$-stability only
involves Chern classes of codimension at least one, and the above two
short exact sequences are indistinguishable in codimension one, they
give rise to one and the same wall in the tilt stability parameter
space. Making this precise is the content of Section
\ref{sec:walls:tiltstability}. Moving on to
$\lambda_{\alpha,\beta,s}$-stability, we apply Schmidt's method to see
that the single $\nu_{\alpha,\beta}$-wall ``sprouts'' two distinct
$\lambda_{\alpha,\beta,s}$-walls corresponding to \eqref{eq:extension1}
and \eqref{eq:extension2}. This is carried out in Section
\ref{sec:walls:brdgstability}.

\subsection{$\nu$-stability wall}\label{sec:walls:tiltstability}

Throughout we specialize to $X=\PP^3$ with $H$ a (hyper)plane. Let
$v=(1,0,-2,2)$ be the Chern character of ideal sheaves $\I_Y$
of subschemes $Y \in \hilb^{2m+2}(\PP^3)$. 

For $(\alpha,\beta)\in \RR_{>0}\times\RR$ we have the tilted abelian
category $\Coh^{\beta}(\PP^3)$ and the slope function
$\nu_{\alpha,\beta}$. We concentrate on the region $\beta<0$, in which
any ideal $\I_Y$ of a subscheme $Y\subset \PP^3$ of dimension $\le 1$
satisfies 
\begin{equation*}
   \mu_\beta(\I_Y) = \frac{c_1(\I_Y)}{\rk(\I_Y)} - \beta = - \beta > 0.
\end{equation*}
As $\I_Y$ is $\mu$-stable also $\mu_\beta(\G) >0$ for every quotient
$\I_Y\twoheadrightarrow \G$ and so $\I_Y \in \mathcal T_{\beta}$. In
particular $\I_Y \in \Coh^{\beta}(\PP^3)$.

We begin by establishing that there is exactly one tilt-stability wall
in the region $\beta<0$. The result as well as the argument is analogous
to the analysis for twisted cubics by Schmidt \cite[Theorem
5.3]{Sch2020}, except that twisted cubics come with a second wall that
destabilizes all objects --- for our skew lines there is no such final
wall.

\begin{figure}
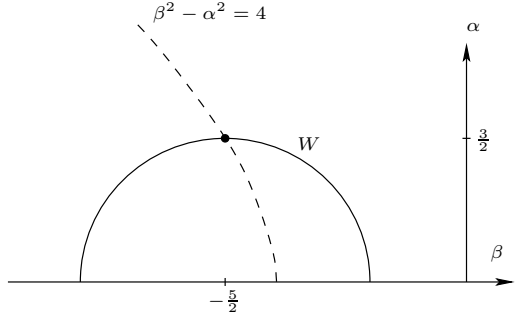

\centering
\begin{minipage}{0.7\textwidth}
\centering
{\pgfkeys{/pgf/fpu/.try=false}%
\ifx\XFigwidth\undefined\dimen1=0pt\else\dimen1\XFigwidth\fi
\divide\dimen1 by 3174
\ifx\XFigheight\undefined\dimen3=0pt\else\dimen3\XFigheight\fi
\divide\dimen3 by 1957
\ifdim\dimen1=0pt\ifdim\dimen3=0pt\dimen1=3946sp\dimen3\dimen1
  \else\dimen1\dimen3\fi\else\ifdim\dimen3=0pt\dimen3\dimen1\fi\fi
\tikzpicture[x=+\dimen1, y=+\dimen3]
{\ifx\XFigu\undefined\catcode`\@11
\def\temp{\alloc@1\dimen\dimendef\insc@unt}\temp\XFigu\catcode`\@12\fi}
\XFigu3946sp
\ifdim\XFigu<0pt\XFigu-\XFigu\fi
\pgfdeclarearrow{
  name = xfiga2,
  parameters = {
    \the\pgfarrowlinewidth \the\pgfarrowlength \the\pgfarrowwidth\ifpgfarrowopen o\fi},
  defaults = {
	  line width=+7.5\XFigu, length=+120\XFigu, width=+60\XFigu},
  setup code = {
    \dimen7 2.6\pgfarrowlength\pgfmathveclen{\the\dimen7}{\the\pgfarrowwidth}
    \dimen7 2\pgfarrowwidth\pgfmathdivide{\pgfmathresult}{\the\dimen7}
    \dimen7 \pgfmathresult\pgfarrowlinewidth
    \pgfarrowssettipend{+\dimen7}
    \pgfarrowssetbackend{+-1.25\pgfarrowlength}
    \dimen9 -\pgfarrowlength\advance\dimen9 by-0.5\pgfarrowlinewidth
    \pgfarrowssetlineend{+\dimen9}
    \dimen9 -\pgfarrowlength\advance\dimen9 by-0.5\pgfarrowlinewidth
    \pgfarrowssetvisualbackend{+\dimen9}
    \pgfarrowshullpoint{+\dimen7}{+0pt}
    \pgfarrowsupperhullpoint{+-1.25\pgfarrowlength}{+0.5\pgfarrowwidth}
    \pgfarrowssavethe\pgfarrowlinewidth
    \pgfarrowssavethe\pgfarrowlength
    \pgfarrowssavethe\pgfarrowwidth
  },
  drawing code = {\pgfsetdash{}{+0pt}
    \ifdim\pgfarrowlinewidth=\pgflinewidth\else\pgfsetlinewidth{+\pgfarrowlinewidth}\fi
    \pgfpathmoveto{\pgfqpoint{-1.25\pgfarrowlength}{-0.5\pgfarrowwidth}}
    \pgfpathlineto{\pgfqpoint{0pt}{0pt}}
    \pgfpathlineto{\pgfqpoint{-1.25\pgfarrowlength}{0.5\pgfarrowwidth}}
    \pgfpathlineto{\pgfqpoint{-\pgfarrowlength}{0pt}}
    \pgfpathclose
    \ifpgfarrowopen\pgfusepathqstroke\else\pgfsetfillcolor{.}
	\ifdim\pgfarrowlinewidth>0pt\pgfusepathqfillstroke\else\pgfusepathqfill\fi\fi
  }
}
\clip(588,-2746) rectangle (3762,-789);
\tikzset{inner sep=+0pt, outer sep=+0pt}
\pgfsetlinewidth{+7.5\XFigu}
\pgfsetstrokecolor{black}
\draw (2850,-2550) arc[start angle=+-0.00, end angle=+180.00, radius=+900];
\pgfsetcolor{black}
\filldraw  (1950,-1650) circle [radius=+23];
\pgfsetarrows{[line width=7.5\XFigu, width=37\XFigu, length=73\XFigu]}
\pgfsetarrowsend{xfiga2}
\draw (3450,-2550)--(3450,-1050);
\draw (600,-2550)--(3750,-2550);
\pgfsetarrowsend{}
\draw (1950,-2525)--(1950,-2575);
\draw (3425,-1650)--(3475,-1650);
\pgfsetbeveljoin
\pgfsetdash{{+60\XFigu}{+60\XFigu}}{++0pt}
\draw (2269,-2549)--(2269,-2546)--(2269,-2539)--(2268,-2527)--(2267,-2512)--(2266,-2494)
  --(2265,-2475)--(2263,-2456)--(2261,-2438)--(2259,-2420)--(2256,-2403)--(2252,-2386)
  --(2249,-2368)--(2244,-2349)--(2240,-2333)--(2235,-2317)--(2230,-2299)--(2225,-2280)
  --(2219,-2260)--(2213,-2239)--(2206,-2217)--(2198,-2194)--(2191,-2171)--(2183,-2147)
  --(2175,-2124)--(2166,-2100)--(2158,-2078)--(2150,-2055)--(2142,-2034)--(2134,-2013)
  --(2127,-1993)--(2119,-1974)--(2112,-1957)--(2105,-1940)--(2098,-1923)--(2090,-1906)
  --(2083,-1888)--(2074,-1870)--(2066,-1852)--(2057,-1834)--(2047,-1815)--(2037,-1796)
  --(2027,-1776)--(2016,-1757)--(2005,-1737)--(1994,-1718)--(1982,-1698)--(1970,-1679)
  --(1958,-1659)--(1945,-1639)--(1932,-1619)--(1919,-1599)--(1907,-1582)--(1895,-1564)
  --(1882,-1545)--(1869,-1526)--(1855,-1506)--(1840,-1485)--(1825,-1464)--(1810,-1442)
  --(1794,-1420)--(1777,-1398)--(1761,-1375)--(1744,-1352)--(1727,-1330)--(1711,-1308)
  --(1694,-1286)--(1678,-1265)--(1663,-1244)--(1648,-1225)--(1633,-1206)--(1619,-1188)
  --(1606,-1170)--(1593,-1154)--(1581,-1139)--(1569,-1124)--(1552,-1103)--(1537,-1084)
  --(1521,-1066)--(1506,-1048)--(1491,-1031)--(1476,-1013)--(1460,-995)--(1443,-977)
  --(1426,-959)--(1410,-942)--(1396,-927)--(1384,-915)--(1376,-906)--(1371,-901)
  --(1369,-899);
\pgftext[base,left,at=\pgfqpointxy{3600}{-2400}] {\fontsize{7}{8.4}\usefont{T1}{ptm}{m}{n}$\beta$}
\pgftext[base,left,at=\pgfqpointxy{3450}{-975}] {\fontsize{7}{8.4}\usefont{T1}{ptm}{m}{n}$\alpha$}
\pgftext[base,left,at=\pgfqpointxy{2400}{-1725}] {\fontsize{7}{8.4}\usefont{T1}{ptm}{m}{n}$W$}
\pgftext[base,left,at=\pgfqpointxy{1844}{-2700}] {\fontsize{7}{8.4}\usefont{T1}{ptm}{m}{n}$-\tfrac{5}{2}$}
\pgftext[base,left,at=\pgfqpointxy{3507}{-1690}] {\fontsize{7}{8.4}\usefont{T1}{ptm}{m}{n}$\tfrac{3}{2}$}
\pgftext[base,left,at=\pgfqpointxy{1500}{-900}] {\fontsize{7}{8.4}\usefont{T1}{ptm}{m}{n}$\beta^2-\alpha^2=4$}
\endtikzpicture}%
\caption{The unique semicircular wall $W$ (solid) for $\nu$-stability
and the hyperbola (dashed) from Equation \ref{eq:hyperbola}.}\label{fig:nuwalls}
\end{minipage}
\end{figure}

\begin{pro}\label{prop:tiltwall}
There is exactly one tilt-stability wall for objects with Chern
character $v = (1,0,-2,2)$ in the region $\beta<0$: it is the semicircle
\begin{equation*}
   W\colon\quad \alpha^2 + (\beta + \tfrac{5}{2})^2 = (\tfrac{3}{2})^2.
\end{equation*}
The wall is defined by exactly the unordered pairs of the following two
types:
\begin{enumerate}
   \item[(1)] $\left\{\I_P(-1), \OO_V(-2)\right\}$, where
   $V\subset \PP^3$ is a plane and $P\in V$, and
   \item[(2)] $\left\{\OO_{\PP^3}(-1), \I_{P/V}(-2)\right\}$, where
   $V\subset\PP^3$ is a plane and $P\not\in V$.
\end{enumerate}
Moreover, the four sheaves figuring in the above unordered pairs are
$\nu_{\alpha,\beta}$-stable objects in $\Coh^{\beta}(\PP^3)$ for all
$(\alpha,\beta)$ on $W$.
\end{pro}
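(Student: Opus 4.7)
The plan is to follow Schmidt's template for twisted cubics \cite[Section 5]{Sch2020}, adapted to $v=(1,0,-2,2)$. Since $\nu$-stability depends only on $\ch_{\leq 2}$, I first verify by direct calculation that the two listed unordered pairs have matching codimension-$\leq 2$ Chern characters: $\ch^\beta_{\leq 2}(\OO_{\PP^3}(-1)) = \ch^\beta_{\leq 2}(\I_P(-1))$ and $\ch^\beta_{\leq 2}(\I_{P/V}(-2)) = \ch^\beta_{\leq 2}(\OO_V(-2))$, so both pairs define the same numerical wall. Setting $\nu_{\alpha,\beta}(\OO_{\PP^3}(-1)) = \nu_{\alpha,\beta}(\I_Y)$ and simplifying yields $\alpha^2 + (\beta + \tfrac{5}{2})^2 = \tfrac{9}{4}$, which is the claimed semicircle $W$. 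I also compute $\overline{\Delta}_H(v) = 4 > 0$, which by Proposition \ref{BGtilt} constrains the size of potential numerical walls.

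For uniqueness I would enumerate numerical walls in the region $\beta < 0$. A numerical destabilizer $v = v_1 + v_2$ has both pieces satisfying $\ch_1^\beta(v_i) \geq 0$ on the wall and Bogomolov $\overline{\Delta}_H(v_i) \geq 0$. Parametrizing $v_1 = (r_1, c_1, d_1)$ with $r_1, c_1 \in \ZZ$ and $d_1 \in \tfrac{1}{2}\ZZ$, the two Bogomolov inequalities combined with $v_1 + v_2 = (1, 0, -2)$ and the nestedness of semicircular walls centered on the $\beta$-axis cut the candidate list down to a short finite set. A case analysis separated by the value of $r_1$ (and hence $r_2 = 1 - r_1$) shows that each surviving candidate either reproduces $W$ or lies outside the region $\alpha > 0$, $\beta < 0$.

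It remains to verify $\nu_{\alpha,\beta}$-stability of the four sheaves on $W$ and confirm that $W$ is an actual wall. For $\OO_{\PP^3}(-1)$ and $\I_P(-1)$ one computes $\overline{\Delta}_H = 0$, which forces these torsion-free sheaves to be $\nu$-stable throughout $\Coh^\beta(\PP^3)$ (a destabilizing short exact sequence would require both sub- and quotient Bogomolov discriminants to vanish, which rules out nontrivial destabilization). For the pure torsion sheaves $\OO_V(-2)$ and $\I_{P/V}(-2)$, each of rank one on its planar support, stability in $\Coh^\beta$ reduces to a direct check that any subobject of matching slope must be trivial. With semistability of the four pieces in hand, the short exact sequences \eqref{eq:extension1} and \eqref{eq:extension2} recorded at the beginning of Section \ref{sec:walls} realize $W$ as an actual wall in the sense of Definition \ref{numwallsactuwalls}. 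The main obstacle will be the enumeration in the middle step: Schmidt's machinery provides the template, but the casework for $v = (1,0,-2,2)$ must be redone, and a few Bogomolov-saturated candidates require extra care to rule out.
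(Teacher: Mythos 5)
Your overall strategy is the same as the paper's (Schmidt's template: matching truncated Chern characters, Bogomolov plus a rank-by-rank enumeration of numerical destabilizers, then realizing the wall via the sequences \eqref{eq:extension1} and \eqref{eq:extension2}), but there is one genuine gap in the middle. Your enumeration, even if completed, only pins down the \emph{numerical} wall and the possible Chern characters $\ch^{-2}(\F)=(1,1,\tfrac12,*)$, $\ch^{-2}(\G)=(0,1,-\tfrac12,*)$ of the destabilizing sub- and quotient objects. The Proposition asserts more: that the wall is defined by \emph{exactly} the two listed pairs of objects. To get that you must classify the actual $\nu$-semistable objects realizing those invariants --- the paper invokes Schmidt's Lemma 5.4 (valid at integral $\beta$, hence the restriction to $\beta_0=-2$) to show they are precisely $\I_Z(-1)$ and $\I_{Z'/V}(-2)$ for finite subschemes $Z,Z'$, and then uses the third Chern character, $\ch_3^{-2}(\F)+\ch_3^{-2}(\G)=\ch_3^{-2}(\E)=-\tfrac23$, to force $\mathrm{length}(Z)+\mathrm{length}(Z')=1$. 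This last computation is what excludes, say, $\{\OO_{\PP^3}(-1),\OO_V(-2)\}$ or pairs with longer $Z$, and it produces exactly the two families in the statement. Your proposal never engages with either step, so as written it proves only that $W$ is the unique wall and that the listed pairs define it, not that no other pairs do.

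A secondary concern: your stability check for the torsion pieces is too optimistic. The sheaf $\OO_V(-2)$ is \emph{not} tilt-stable everywhere --- it has its own wall, the semicircle $\alpha^2+(\beta+\tfrac52)^2=(\tfrac12)^2$ coming from $0\to\OO_{\PP^3}(-2)\to\OO_V(-2)\to\OO_{\PP^3}(-3)[1]\to 0$ in $\Coh^\beta(\PP^3)$ for $-3<\beta<-2$ --- so ``any subobject of matching slope must be trivial'' is false in general and must be argued on $W$ specifically (conveniently, $W$ has radius $\tfrac32>\tfrac12$ about the same center). The paper handles this by checking stability at all integral $\beta$, where the torsion-pair membership forces $\ch_1^\beta$ of any quotient to vanish; some such device is needed. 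Similarly, $\overline{\Delta}_H(\I_P(-1))=0$ alone does not immediately give stability of the non-reflexive sheaf $\I_P(-1)$; the cleaner route is the paper's reduction exhibiting $\I_Z(-1)\subset\OO_{\PP^3}(-1)$ in $\Coh^\beta(\PP^3)$ with quotient invisible to $\nu_{\alpha,\beta}$.
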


The wall $W$ and the hyperbola $\nu_{\alpha,\beta}(v)=0$, intersecting
at $(\alpha,\beta)=(3/2, -5/2)$, are shown in Figure \ref{fig:nuwalls}.
Note that we visualize the $\alpha$-axis as the vertical one.

We first prove the final claim in the proposition. Here is a slightly
more general statement:

\begin{lem}\label{lem:tiltstability}
\leavevmode
\begin{enumerate}
   \item Let $Z\subset \PP^3$ be a finite, possibly empty subscheme.
   Then the ideal sheaf $\I_Z(-1)$ is a $\nu_{\alpha,\beta}$-stable
   object in $\Coh^{\beta}(\PP^3)$ for all $\alpha>0$ and $\beta<-1$.
   \item Let $V\subset \PP^3$ be a plane and $Z\subset V$ be a finite,
   possibly empty subscheme. Then the relative ideal sheaf
   $\I_{Z/V}(-2)$ is a $\nu_{\alpha,\beta}$-stable object in
   $\Coh^{\beta}(\PP^3)$ for all $(\alpha,\beta)\in \RR_{>0}\times \RR$
   such that
   \begin{equation*}
      \alpha^2 + (\beta + \tfrac{5}{2})^2 > (\tfrac{1}{2})^2.
   \end{equation*}
\end{enumerate}
\end{lem}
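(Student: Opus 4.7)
The overall strategy is to embed both $\I_Z(-1)$ and $\I_{Z/V}(-2)$ into natural reference sheaves---$\OO_{\PP^3}(-1)$ and $\OO_V(-2)$, respectively---whose $\nu_{\alpha,\beta}$-stability I establish directly, and then transfer stability across the inclusion by exploiting that the cokernel is $0$-dimensional torsion with vanishing $H^2\ch_1^\beta$ and $H\ch_2^\beta$.

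For part (1), I first verify $\I_Z(-1) \in \Coh^\beta(\PP^3)$: since $\I_Z(-1)$ is $\mu$-stable with $\mu_\beta(\I_Z(-1)) = -1-\beta > 0$ for $\beta<-1$, it lies in $\mathcal{T}_\beta$. The short exact sequence $0 \to \I_Z(-1) \to \OO_{\PP^3}(-1) \to \OO_Z \to 0$ remains exact in $\Coh^\beta$ because $\OO_Z$ is $0$-dimensional torsion, hence automatically in $\mathcal{T}_\beta$. Next I claim $\OO_{\PP^3}(-1)$ is $\nu_{\alpha,\beta}$-stable throughout $\alpha>0,\,\beta<-1$: a subobject $\F$ in $\Coh^\beta$ of the torsion-free sheaf $\OO(-1) \in \mathcal{T}_\beta$ must itself be a subsheaf, hence of the form $\I_W(-1)$ for some subscheme $W\subset \PP^3$; when $\dim W \leq 1$ the quotient $\OO_W$ has $\nu = +\infty$ and does not destabilize, and when $\dim W \geq 2$ the would-be numerical wall fails to be actual in the region $\beta<-1$ because the kernel $\OO(-1-\deg W)$ leaves $\mathcal{T}_\beta$ before the wall begins. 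Given any short exact sequence $0 \to \F \to \I_Z(-1) \to \Q \to 0$ in $\Coh^\beta$ with both terms nonzero, the composition $\F \hookrightarrow \OO_{\PP^3}(-1)$ is a proper subobject. Since the quotients $\OO_{\PP^3}(-1)/\F$ and $\Q$ differ only by the torsion sheaf $\OO_Z$, they share the same $\nu_{\alpha,\beta}$-slope; stability of $\OO_{\PP^3}(-1)$ then yields $\nu(\F) < \nu(\OO_{\PP^3}(-1)/\F) = \nu(\Q)$.

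Part (2) follows the same pattern with $\OO_V(-2)$ playing the role of $\OO_{\PP^3}(-1)$. Here $\I_{Z/V}(-2)$ is pure $2$-dimensional torsion and automatically lies in $\mathcal{T}_\beta$, and the inclusion $\I_{Z/V}(-2) \hookrightarrow \OO_V(-2)$ with cokernel $\OO_Z$ is exact in $\Coh^\beta$. The substantive step is tilt-stability of $\OO_V(-2)$ outside the semicircle $\alpha^2+(\beta+\tfrac{5}{2})^2 = \tfrac{1}{4}$. For this I perform a wall analysis: the Koszul-type presentation $0 \to \OO_{\PP^3}(-3) \to \OO_{\PP^3}(-2) \to \OO_V(-2) \to 0$ yields, on the range of $\beta$ where $\OO(-3) \in \mathcal{T}^\perp_\beta$ and $\OO(-2) \in \mathcal{T}_\beta$, a short exact sequence $0 \to \OO(-2) \to \OO_V(-2) \to \OO(-3)[1] \to 0$ in $\Coh^\beta$; equating the corresponding $\nu_{\alpha,\beta}$-slopes pinpoints exactly the claimed semicircle. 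To exclude larger walls I combine the nested-semicircle structure of $\nu$-walls with Proposition \ref{BGtilt}: the small discriminant $\overline{\Delta}_H(\OO_V(-2)) = 1$ restricts Chern invariants of candidate destabilizers to finitely many numerical possibilities, each to be checked directly. Transfer of stability from $\OO_V(-2)$ to $\I_{Z/V}(-2)$ then proceeds exactly as in part (1).

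The main obstacle is the wall analysis for $\OO_V(-2)$: even with $\overline{\Delta}_H = 1$ sharply constraining destabilizers, one must separately handle subobjects arising from genuine sheaves in $\mathcal{T}_\beta$ and from shifts $\G[1]$ of sheaves in $\mathcal{T}^\perp_\beta$, verifying in each case that the associated semicircle either fails to intersect the claimed region or lies strictly inside the one of radius $\tfrac{1}{2}$. Once this enumeration is complete, the stability transfer and the $\Coh^\beta$-membership checks in both parts reduce to bookkeeping with $\ch^\beta$.
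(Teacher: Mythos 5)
Your reduction to the case $Z=\emptyset$ --- embedding $\I_Z(-1)$ into $\OO_{\PP^3}(-1)$ (resp.\ $\I_{Z/V}(-2)$ into $\OO_V(-2)$) and using that the quotients differ only by the $0$-dimensional sheaf $\OO_Z$, which is invisible to $\nu_{\alpha,\beta}$ --- is exactly the paper's argument and is fine. The problems are in the two stability claims for the reference sheaves. First, your argument for $\OO_{\PP^3}(-1)$ rests on the assertion that a subobject $\F\subset\OO_{\PP^3}(-1)$ in $\Coh^\beta(\PP^3)$ ``must itself be a subsheaf, hence of the form $\I_W(-1)$.'' This is false: from the long exact sequence of sheaf cohomologies one only gets that $\F$ is a sheaf and that $\F\to\OO_{\PP^3}(-1)$ has kernel $\coho^{-1}(\G)\in\mathcal T^\perp_\beta$, which need not vanish; $\F$ can have rank $>1$. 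The paper avoids this entirely by citing \cite[Proposition 7.4.1]{BMT2014}: $\overline{\Delta}_H(\OO_{\PP^3}(-1))=0$ forces tilt-stability everywhere. Your gap here is repairable by that citation, but as written the step fails.

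Second, and more seriously, the stability of $\OO_V(-2)$ outside the semicircle of radius $\tfrac12$ is the substantive content of part (2), and you only outline a plan for it. Your claim that $\overline{\Delta}_H(\OO_V(-2))=1$ together with Proposition \ref{BGtilt} ``restricts Chern invariants of candidate destabilizers to finitely many numerical possibilities'' is not justified: at the top point $\beta=-\tfrac52$ of any candidate wall the constraint on $\bigl(\ch_0(\F),H^2\ch_1(\F)\bigr)=(r,c)$ is the single linear equation $2c+5r=1$, which has infinitely many integer solutions, and one must actually prove that Bogomolov positivity of both $\F$ and $\G$ combined with $\alpha^2>0$ excludes all but the radius-$\tfrac12$ wall --- a nontrivial case analysis over all $r$ that you declare ``to be checked directly'' but do not carry out (nor do you invoke the stronger wall inequality $\overline{\Delta}(\F)+\overline{\Delta}(\G)\le\overline{\Delta}(\E)$ that would help). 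The paper's route is different and decisively simpler: since all numerical walls for $\ch(\OO_V(-2))$ are semicircles centered at $(0,-\tfrac52)$, any wall of radius $>\tfrac12$ must meet the vertical lines $\beta=-2$ and $\beta=-3$, so it suffices to prove stability at \emph{integer} $\beta$; there an integrality argument ($0\le c_\G-\beta r_\G<1$ with both sides integers forces $\ch_1^\beta(\G)=0$, hence $\nu(\G)=\infty$) kills every potential destabilizer at once. Without either that reduction or a completed enumeration, your proof of part (2) is incomplete.
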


\begin{rem}
The condition on $(\alpha,\beta)$ in part (2) is necessary because of a
wall for $\I_{Z/V}(-2)$. For simplicity let $Z$ be empty. There is a
short exact sequence of coherent sheaves
\begin{equation*}
   0 \to \OO_{\PP^3}(-3) \to \OO_{\PP^3}(-2) \to \OO_V(-2) \to 0
\end{equation*}
which yields a short exact sequence
\begin{equation*}
   0 \to \OO_{\PP^3}(-2) \to \OO_V(-2) \to \OO_{\PP^3}(-3)[1] \to 0
\end{equation*}
in $\Coh^\beta(\PP^3)$ when $-3 < \beta < -2$. The condition
$\nu_{\alpha,\beta}(\OO_{\PP^3}(-2)) < \nu_{\alpha,\beta}(\OO_V(-2))$ is
exactly the inequality in (2).
\end{rem}

\begin{proof}[Proof of Lemma \ref{lem:tiltstability}]
The sheaf $\I_Z(-1)$ is $\mu$-stable and satisfies
$\mu_\beta(\I_Z(-1)) = -1 - \beta$. For all $\beta<-1$ it is thus an
object in $\mathcal T_\beta$ and so also in $\Coh^\beta(\PP^3)$. Since
$\I_{Z/V}(-2)$ is a torsion sheaf it too belongs to $\mathcal T_\beta$
and so to $\Coh^\beta(\PP^3)$, for all $\beta$.

We reduce to the situation $Z=\emptyset$. First consider $\I_Z(-1)$ and
assume $\beta<-1$. Note that $\I_Z(-1)$ is a subobject of
$\OO_{\PP^3}(-1)$ also in $\Coh^\beta(\PP^3)$ since the torsion sheaf
$\OO_Z$ belongs to that category and hence
\begin{equation*}
   0 \to \I_Z(-1) \to \OO_{\PP^3}(-1) \to \OO_Z \to 0
\end{equation*}
is a short exact sequence in $\Coh^\beta(\PP^3)$. Suppose
$\OO_{\PP^3}(-1)$ is $\nu_{\alpha,\beta}$-stable. Let $\F\subset
\I_Z(-1)$ be a proper nonzero subobject in $\Coh^\beta(\PP^3)$ with
quotient $\G$. View $\F$ also as a subobject of $\OO_{\PP^3}(-1)$, with
quotient $\G'$. Then $\nu_{\alpha,\beta}$ cannot distinguish between
$\G$ and $\G'$. Thus if $\OO_{\PP^3}(-1)$ is $\nu_{\alpha,\beta}$-stable
then
\begin{equation*}
   \nu_{\alpha,\beta}(\F) < \nu_{\alpha,\beta}(\G') = \nu_{\alpha,\beta}(\G)
\end{equation*}
and so $\I_Z(-1)$ is $\nu_{\alpha,\beta}$-stable as well. The reduction
from $\I_{Z/V}(-2)$ to $\OO_V(-2)$ is completely analogous.

$\nu_{\alpha,\beta}$-stability of the line bundle $\OO_{\PP^3}(-1)$ is a
consequence of $\overline{\Delta}_H(\OO_{\PP^3}(-1)) = 0$, by
\cite[Proposition 7.4.1]{BMT2014}.

The main task is to establish $\nu_{\alpha,\beta}$-stability of
$\OO_V(-2)$ in the region defined in part (2). By point (3) of \cite
[Theorem 3.3]{Sch2020}, the ray $\beta=-\frac{5}{2}$ intersects all
potential semicircular $\nu$-walls for $\ch(\OO_V(-2))$ at their top
point, meaning they must be centered at $(0,-\frac{5}{2})$. All such
semicircles of radius bigger than $\frac{1}{2}$ will intersect the ray
$\beta=-2$ (as well as $\beta=-3$). Thus it suffices to prove that
$\OO_V(-2)$ is $\nu_{\alpha,\beta}$-stable for all $\alpha>0$ and all
\emph{integers} $\beta$.

For such $(\alpha,\beta)$, suppose
\begin{equation*}
   0 \to \F \to \OO_V(-2) \to \G \to 0
\end{equation*}
is a short exact sequence in $\Coh^\beta(\PP^3)$ with $\F\ne 0$. We
claim that $\ch_1^\beta(\G) = 0$. This yields the result, since then
$\nu_{\alpha,\beta}(\G) = \infty$ and so $\OO_V(-2)$ is
$\nu_{\alpha,\beta}$-stable.

Let $r_\F = H^3\ch_0(\F)$ and $c_\F = H^2\ch_1(\F)$, i.e.\ the rank and
first Chern class considered as integers. Also let $r_\G = H^3\ch_0(\G)$
and $c_\G = H^2\ch_1(\G)$. By the short exact sequence we have
\begin{equation*}
   r_\F + r_\G = 0 \quad\text{and}\quad c_\F + c_\G = 1.
\end{equation*}

The induced long exact cohomology sequence of sheaves shows that
$\coho^{-1}(\F) = 0$, so from the short exact sequence
\begin{equation*}
   0 \to \coho^{-1}(\F)[1]  \to \F \to \coho^{0}(\F) \to 0
\end{equation*}
we see that $\F\cong \coho^{0}(\F)$ is a coherent sheaf in
$\mathcal{T}_\beta$. The remaining long exact sequence is
\begin{equation*}
   0 \to \coho^{-1}(\G) \to \F \to \OO_V(-2) \to \coho^0(\G) \to 0
\end{equation*}
The map into $\F$ cannot be an isomorphism, since $\coho^{-1}(\G)$ is in
$\mathcal T^\perp_\beta$ and $\F$ is in $\mathcal T_\beta$ and is
nonzero by assumption. Therefore the map in the middle is nonzero and so
the rightmost sheaf $\coho^0(\G)$ is a proper quotient of $\OO_V(-2)$
and so is a torsion sheaf supported in dimension $\le 1$. Thus only
$\coho^{-1}(\G)$ contributes to $r_\G$ and $c_\G$.

Suppose $r_\F\ne 0$. As $\F \in \mathcal{T}_{\beta}$ and $\coho^{-1}(\G)
\in \mathcal{T}^\perp_{\beta}$ we have 
\begin{equation*}
   \begin{cases}
   \mu_{\beta}(\F)>0\\
   \mu_{\beta}(\coho^{-1}(\G))\leq 0
   \end{cases} \implies
   \begin{cases}
   \dfrac{c_{\F}}{r_\F} -\beta > 0\\
   \dfrac{c_\G}{r_\G} -\beta \leq 0
   \end{cases} \implies
   \begin{cases}
   \dfrac{c_{\G}-1}{r_\G}-\beta > 0\\
   \dfrac{c_\G}{r_\G}-\beta \leq 0
   \end{cases}
\end{equation*}
and since $r_\G=-r_\F$ is negative we get $0 \le c_\G - \beta r_\G < 1$.
Since these are integers we must have $c_\G-\beta r_\G = 0$. Thus
\begin{equation*}
   \ch_1^{\beta}(\G) = (c_\G-\beta r_\G) H = 0
\end{equation*}
as claimed.

If on the other hand $r_\F=0$ then also $\coho^{-1}(\G)$ has rank zero
and hence must be zero as there are no torsion sheaves in
$\mathcal T^{\perp}_\beta$. Thus also $\G=\coho^0(\G)$ is a sheaf, with
vanishing rank and first Chern class. Again $\ch_1^{\beta}(\G)=0$ as
claimed. This completes the proof.
\end{proof}

By explicit computation (see \cite[Theorem 3.3]{Sch2020}), all numerical
tilt walls with respect to $v=(1,0,-2,2)$ in the region $\beta<0$ are
nested semicircles. More precisely, each is centered on the axis
$\alpha=0$ and has top point on the curve $\nu_{\alpha,\beta}(v)=0$,
that is the hyperbola
\begin{equation}\label{eq:hyperbola}
   \beta^2 - \alpha^2 = 4.
\end{equation}
In particular every tilt wall must intersect the ray $\beta=-2$.

We establish in the following lemma that there is at most one tilt
stability wall intersecting the ray $\beta=-2$ for Chern character $v$
and $\beta<0$. We also give the possible Chern characters of sub- and
quotient objects that define it. This lemma is tightly analogous to
Schmidt \cite[Lemma 5.5]{Sch2020}. We use an asterisk $*$ to denote an
unspecified numerical value.
\begin{lem}\label{lem:possiblecherncharacters}
Let $\beta_0=-2$ and let $\alpha>0$ be arbitrary. Suppose there is a short
exact sequence
\begin{equation*}
   0 \to \F \to \E \to \G \to 0
\end{equation*}
of $\nu_{\alpha,\beta_0}$-semistable objects in $\Coh^{\beta_0}(\PP^3)$
with $\ch(\E) = (1,0,-2,*)$ and $\nu_{\alpha,\beta_0}(\F) =
\nu_{\alpha,\beta_0}(\G)$. Then
\begin{equation*}
   \ch^{\beta_0}(\F) = (1,1,\tfrac{1}{2},*)
   \quad\text{and}\quad
   \ch^{\beta_0}(\G) = (0,1,-\tfrac{1}{2},*)
\end{equation*}
or the other way around.
\end{lem}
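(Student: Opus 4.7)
The plan is to enumerate numerically all possible Chern characters of the destabilizing sub- and quotient objects, using the Bogomolov inequality (Proposition~\ref{BGtilt}) together with the integrality built into the lattice $\Lambda$.

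First I would substitute $\beta_0 = -2$ into the formulas of Section~\ref{sec:prelim:constr} to compute $\ch^{\beta_0}(\E) = (1, 2, 0, *)$. Writing $\ch^{\beta_0}(\F) = (r, c, d, *)$, additivity gives $\ch^{\beta_0}(\G) = (1 - r, 2 - c, -d, *)$. Every object of the heart $\Coh^{\beta_0}(\PP^3)$ has nonnegative $H^2\ch_1^{\beta_0}$, so $0 \leq c \leq 2$, while $\Lambda$-integrality forces $r, c \in \ZZ$ and $d \in \tfrac{1}{2}\ZZ$.

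Next I would rule out $c \in \{0, 2\}$: if $c = 0$ then $\nu_{\alpha,\beta_0}(\F) = +\infty$ by the very-weak-stability convention, while $c_\G = 2 > 0$ makes $\nu_{\alpha,\beta_0}(\G)$ finite, contradicting the equal-slope hypothesis; the case $c = 2$ is symmetric. Hence $c = c_\G = 1$, and $\nu_{\alpha,\beta_0}(\F) = \nu_{\alpha,\beta_0}(\G)$ rearranges to
\begin{equation*}
   d = \tfrac{\alpha^2}{4}(2r - 1),
\end{equation*}
so $d$ and $2r - 1$ share the same (nonzero) sign.

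The heart of the argument is then to combine this with the two Bogomolov inequalities $\overline{\Delta}_H(\F) = 1 - 2rd \geq 0$ and $\overline{\Delta}_H(\G) = 1 + 2(1-r)d \geq 0$, that is $rd \leq 1/2$ and $(r - 1)d \leq 1/2$, together with the quantization $|d| \geq 1/2$. For $r \geq 2$ the first inequality yields $0 < d \leq 1/(2r) \leq 1/4$, contradicting $d \geq 1/2$; for $r \leq -1$ the second yields $0 < -d \leq 1/(2|r-1|) \leq 1/4$, again contradicting $|d| \geq 1/2$. The remaining values $r \in \{0, 1\}$ pin down $d = -1/2$ and $d = 1/2$ respectively (and incidentally $\alpha^2 = 2$), producing the two Chern characters of the statement with the roles of $\F$ and $\G$ swapped. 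The main obstacle is the bookkeeping: tracking which of the two Bogomolov inequalities gives the decisive bound for each sign of $2r - 1$, and systematically invoking $\Lambda$-integrality to collapse the a~priori one-parameter family of numerical walls to the single unordered pair in the conclusion.
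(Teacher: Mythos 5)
Your proposal is correct and follows essentially the same route as the paper: compute $\ch^{\beta_0}(\E)=(1,2,0,*)$, bound $0\le c\le 2$ from positivity of $H^2\ch_1^{\beta_0}$ on the heart, eliminate $c\in\{0,2\}$ via the infinite-slope convention, and then combine the equal-slope relation $\alpha^2=4d/(2r-1)$ with the Bogomolov inequalities for $\F$ and $\G$ and half-integrality of $d$ to force $(r,d)\in\{(1,\tfrac12),(0,-\tfrac12)\}$. The only difference is cosmetic bookkeeping in the case split on $r$.
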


\begin{proof}
Keep $\beta_0=-2$ throughout. We compute $\ch^{\beta_0}(\E) =
(1,2,0,*)$. Let $\ch^{\beta_0}(\F) = (r,c,d,*)$ with $r,c\in \ZZ$ and
$d\in \frac{1}{2}\ZZ$. Then $\ch^{\beta_0}(\G) = (1-r,2-c,-d,*)$.

Since the (very weak) stability function $Z^{\textrm{tilt}}$ sends
effective classes to the upper half plane $\mathbb H \cup \{0\}$ and
$Z^{\textrm{tilt}}(\E) = Z^{\textrm{tilt}}(\F) + Z^{\textrm{tilt}}(\G)$
we have
\begin{equation*}
   0 \le \Im Z^{\textrm{tilt}}(\F) \le \Im Z^{\textrm{tilt}}(\E).
\end{equation*}
Since $\Im Z^{\textrm{tilt}} = H\ch_1^{\beta_0}$ this gives $0 \le c \le 2$.

If $c=0$ then $\nu_{\alpha,\beta_0}(\F) = \infty$ and
$\nu_{\alpha,\beta_0}(\G) < \infty$, which is a contradiction. Similarly
if $c=2$ then $\nu_{\alpha,\beta_0}(\F) < \infty$ and
$\nu_{\alpha,\beta_0}(\G) = \infty$, again a contradiction. Therefore
$c=1$.

With $c=1$ we compute
\begin{equation*}
   \nu_{\alpha,\beta_0}(\F) = d - \tfrac{1}{2}\alpha^2r
   \quad\text{and}\quad
   \nu_{\alpha,\beta_0}(\G) = -d - \tfrac{1}{2}\alpha^2(1-r)
\end{equation*}
and so the condition $\nu_{\alpha,\beta_0}(\F) =
\nu_{\alpha,\beta_0}(\G)$ says
\begin{equation}\label{eq:positive}
   \alpha^2 = \frac{4d}{2r-1}
\end{equation}
so this expression must be strictly positive.

Suppose $r\ge 1$ and apply the Bogomolov inequality (Proposition
\ref{BGtilt}) to $\F$:
\begin{equation*}
   0 \le \overline{\Delta}_H(\F) = 1 - 2rd
   \quad\implies\quad d \le \frac{1}{2r}
\end{equation*}
When $r\ge 1$ this gives $d \le \frac{1}{2}$. On the other hand the
positivity of \eqref{eq:positive} gives $d>0$ and as $d$ is a half
integer this leaves only the possibility $d=\frac{1}{2}$ and $r=1$.

Similarly suppose $r\le 0$ and apply the Bogomolov inequality to $\G$:
\begin{equation*}
   0 \le \overline{\Delta}_H(\G) = 1 + 2(1-r)d
   \quad\implies\quad d \ge -\frac{1}{2(1-r)}
\end{equation*}
When $r\le 0$ this gives $d\ge -\tfrac{1}{2}$. On the other hand the
positivity of \eqref{eq:positive} gives $d<0$ and as $d$ is a half
integer this leaves only the possibility $d=-\frac{1}{2}$ and $r=0$.
\end{proof}

\begin{proof}[Proof of Proposition \ref{prop:tiltwall}]	
Assume there is a tilt stability wall for $v=(1,0,-2,2)$, i.e. there is
a short exact sequence 
\begin{equation*}
   0 \to \F \to \E \to \G \to 0
\end{equation*}
of $\nu_{\alpha,\beta}$-semistable objects in $\Coh^{\beta}(\PP^3)$ with
$\ch(\E) = (1,0,-2,2)$ and $\nu_{\alpha,\beta}(\F) =
\nu_{\alpha,\beta}(\G)$. As already pointed out, the same conditions
then hold for some $(\alpha,\beta_0)$ with $\beta_0=-2$. Then by Lemma
\ref{lem:possiblecherncharacters}, up to swapping $\F$ and $\G$, we have
\begin{align}
   \ch^{\beta_0}(\F) &= (1,1,\tfrac{1}{2},*)\label{eq:invariant1}\\
   \ch^{\beta_0}(\G) &= (0,1,-\tfrac{1}{2},*).\label{eq:invariant2}
\end{align}
Given any pair $\F,\G$ of such objects, write out the condition
$\nu_{\alpha,\beta}(\F) = \nu_{\alpha,\beta}(\G)$ on $(\alpha,\beta)$ to
obtain the equation for the wall in question; this yields the semicircle
as claimed. Thus we have proved that there is at most one tilt-wall and
found its equation.

A further result of Schmidt \cite[Lemma 5.4]{Sch2020} (which requires
$\beta$ to be integral, and so applies for $\beta_0=-2$) says that the
only $\nu_{\alpha,\beta_0}$-semistable objects in
$\Coh^{\beta_0}(\PP^3)$ with the invariants \eqref{eq:invariant1} and
\eqref{eq:invariant2} are
\begin{align*}
   \F &\cong \I_Z(-1)\\
   \G &\cong \I_{Z'/V}(-2)
\end{align*}
for a finite subscheme $Z\subset \PP^3$, a plane $V\subset \PP^3$ and a
finite subscheme $Z'\subset V$ (where $Z$ and $Z'$ are allowed to be
empty). Let $n$ and $n'$ denote the lengths of $Z$ and $Z'$,
respectively. Again for $\beta_0=-2$ we compute
\begin{align*}
   \ch^{\beta_0}_3(\F) &= \ch^{\beta_0}_3(\I_Z(-1)) = \tfrac{1}{6} - n \\
   \ch^{\beta_0}_3(\G) &= \ch^{\beta_0}_3(\I_{Z'/V}(-2)) = \tfrac{1}{6} - n'
\end{align*}
and moreover $\ch^{\beta_0}_3(\E) = - \frac{2}{3}$. Thus from
$\ch^{\beta_0}_3(\E) = \ch^{\beta_0}_3(\F) + \ch^{\beta_0}_3(\G)$ we
find
\begin{equation*}
   n + n' = 1
\end{equation*}
and so either $Z$ is empty and $Z'$ is a point, or $Z$ is a point and
$Z'$ is empty. This proves that only the two listed pairs of semistable
objects $\F,\G$ may occur in a short exact sequence defining the wall.

To finish the proof it only remains to show that both pairs of objects
listed do in fact realize the wall. By Lemma \ref{lem:tiltstability},
the sheaves $\I_Z(-1)$ and $\I_{Z'/V}(-2)$ are in $\Coh^\beta(\PP^3)$
and are $\nu_{\alpha, \beta}$-semistable (in fact $\nu_{\alpha,
\beta}$-stable) for all $(\alpha,\beta)$ on the semicircle. Also, the
ideal $\E=\I_Y$ of any $Y\in \hilb^{2m+2}(\PP^3)$ is an object in
$\Coh^{\beta}(\PP^3)$ (when $\beta < 0$) and since any ideal is
$\mu$-stable it is $\nu_{\alpha,\beta}$-stable for $\alpha\gg 0$ (by
Proposition \ref{largevoltilt}). Hence it is $\nu_{\alpha,\beta}$-stable
outside the semicircle and at least $\nu_{\alpha,\beta}$-semistable on
the semicircle. Thus, short exact sequences of the types
\eqref{eq:extension1} and \eqref{eq:extension2}  define the wall and we
are done.
\end{proof}

\subsection{$\lambda$-stability walls}\label{sec:walls:brdgstability}
Next we apply Schmidt's Theorem \ref{thm:schmidt} to the single
$\nu_{\alpha,\beta}$-wall found in Proposition \ref{prop:tiltwall}; this
yields two $\lambda_{\alpha,\beta,s}$-walls. 

We set up notation first: for $(\alpha,\beta,s) \in
\RR_{>0}\times\RR\times\RR_{>0}$ we have the doubly tilted category
$\mathcal{A}^{\alpha,\beta}(\PP^3)$ and the slope function
$\lambda_{\alpha,\beta,s}$. Once and for all we fix an arbitrary value
$s>0$ and view the $(\alpha,\beta)$-plane $\RR_{>0}\times\RR$ as
parametrizing both $\nu_{\alpha,\beta}$-stability and
$\lambda_{\alpha,\beta,s}$-stability; as before we restrict to
$\beta<0$. Walls and chambers are taken with respect to the Chern
character $v=(1,0,-2,2)$.

Write $P_v\subset\RR_{>0}\times\RR$ for the open subset defined by
$\nu_{\alpha,\beta}(v)>0$ and $\beta<0$; this is the region to the left
of the hyperbola \eqref{eq:hyperbola} in Figure \ref{fig:lambdawalls}.
Theorem \ref{thm:schmidt} addresses walls in $P_v$ close to the boundary
hyperbola.

\begin{pro}\label{prop:lambda-walls}
There are exactly two $\lambda_{\alpha,\beta,s}$-walls with respect to
$v=(1,0,-2,2)$ in $P_v$ whose closure intersect the hyperbola
\eqref{eq:hyperbola}. They are defined exactly by the two pairs of
objects listed in Proposition \ref{prop:tiltwall}.
\end{pro}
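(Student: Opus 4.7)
The approach is to apply Schmidt's comparison theorem (Theorem \ref{thm:schmidt}) to transfer information between the unique $\nu$-stability wall $W$ of Proposition \ref{prop:tiltwall} and the $\lambda$-walls in $P_v$. The only point where $W$ meets the hyperbola \eqref{eq:hyperbola} is its apex $(\alpha_0,\beta_0) = (3/2, -5/2)$. First I would verify the hypotheses of Theorem \ref{thm:schmidt} at this point: direct computation gives $\overline{\Delta}_H(v) = 0 - 2 \cdot 1 \cdot (-2) = 4 > 0$ and $H^2\ch_1^{\beta_0}(v) = -\beta_0 = 5/2 > 0$. Then by Theorem \ref{thm:schmidt}(1), any $\lambda$-wall in $P_v$ whose closure meets the hyperbola must be defined by a triangle that also defines a $\nu$-wall through its limit point on the hyperbola, forcing that point to be $(\alpha_0,\beta_0)$. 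Since Proposition \ref{prop:tiltwall} says the only $\nu$-wall is $W$ and its defining unordered pairs are exactly the two listed, the defining pair $\{\F,\G\}$ of any such $\lambda$-wall must be one of those two.

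For existence I would apply Theorem \ref{thm:schmidt}(2) to each of the two pairs separately. The $\nu_{\alpha_0,\beta_0}$-stability of $\F$ and $\G$ is given by Lemma \ref{lem:tiltstability}. It remains to exhibit points $(\alpha,\beta)$ arbitrarily close to $(\alpha_0,\beta_0)$ on the set $\{\lambda_{\alpha,\beta,s}(\F) = \lambda_{\alpha,\beta,s}(\G)\}$ satisfying $\nu_{\alpha,\beta}(\F)>0$ and $\nu_{\alpha,\beta}(\G)>0$. The key observation is that along $W$ the additivity of $Z^{\mathrm{tilt}}$ forces $\nu(\F) = \nu(\G) = \nu(v)$, and this common value is strictly positive on $W \cap P_v$. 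Clearing denominators in the equation $\lambda(\F)=\lambda(\G)$ produces a polynomial equation in $(\alpha,\beta)$ whose zero locus contains a branch through $(\alpha_0,\beta_0)$ entering $P_v$; along this branch the $\nu$-positivity of $\F$ and $\G$ persists by continuity. Theorem \ref{thm:schmidt}(2) then yields an actual $\lambda$-wall on the $\nu$-positive side.

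Finally the two resulting $\lambda$-walls are distinct because the defining pairs differ in $\ch_3$: a direct computation gives $\ch_3(\I_P(-1)) = -7/6$ and $\ch_3(\OO_V(-2)) = 19/6$ for pair (1), while $\ch_3(\OO_{\PP^3}(-1)) = -1/6$ and $\ch_3(\I_{P/V}(-2)) = 13/6$ for pair (2). Since $\ch_0$, $\ch_1$ and $\ch_2$ agree between the two splittings of $v$ but $\lambda_{\alpha,\beta,s}$ depends nontrivially on $\ch_3^\beta$, the two wall equations cut out distinct loci near $(\alpha_0,\beta_0)$. The main technical obstacle is the local analysis in the existence step: at $(\alpha_0,\beta_0)$ both the numerators and denominators of $\lambda(\F)$ and $\lambda(\G)$ vanish on $W$, so the $\lambda$-wall equation is degenerate there and its local structure requires an asymptotic expansion in the spirit of Schmidt's analysis for twisted cubics, to identify the correct branch entering $P_v$ rather than escaping outside it.
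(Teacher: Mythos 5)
Your overall strategy is the same as the paper's: verify $\overline{\Delta}_H(v)>0$ and $\ch_1^{\beta_0}(v)>0$, use Theorem \ref{thm:schmidt}(1) plus Proposition \ref{prop:tiltwall} to reduce to the two candidate pairs, and use Theorem \ref{thm:schmidt}(2) together with the stability statement in Lemma \ref{lem:tiltstability} to show both candidates are realized. The uniqueness half and the hypothesis checks are fine.

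The gap is in the existence step. You assert that along the branch of $\{\lambda_{\alpha,\beta,s}(\F)=\lambda_{\alpha,\beta,s}(\G)\}$ through $(\alpha_0,\beta_0)$ ``the $\nu$-positivity of $\F$ and $\G$ persists by continuity.'' But at $(\alpha_0,\beta_0)$ one has $\nu_{\alpha_0,\beta_0}(\F)=\nu_{\alpha_0,\beta_0}(\G)=\nu_{\alpha_0,\beta_0}(v)=0$: the base point lies \emph{on} the boundaries of the open regions $\{\nu(\F)>0\}$, $\{\nu(\G)>0\}$ and $P_v$, so continuity gives nothing. Whether the branch enters the intersection of these three regions is a genuinely local question governed by tangent directions, and a priori the branch could be tangent to, or fall on the wrong side of, one of the boundary curves. (Your observation that $\nu(\F)=\nu(\G)=\nu(v)>0$ on $W\cap P_v$ concerns the $\nu$-wall $W$, which is a different curve from the $\lambda$-wall locus, so it does not help.) You correctly flag this local analysis as ``the main technical obstacle,'' but you never carry it out, and it is precisely the content of the proof: the paper computes by implicit differentiation that the boundary curves $\nu_{\alpha,\beta}(v)=0$, $\nu_{\alpha,\beta}(\F)=0$ and $\nu_{\alpha,\beta}(\G)=0$ have slopes $-5/3$, $-1$ and $\infty$ (the line $\beta=-5/2$) at $(\alpha_0,\beta_0)$, with the positive region to the left in each case, so it suffices that each $\lambda$-wall has slope $>-1$ there; differentiating the cleared equation $(\Re Z(\F))(\Im Z(\G))=(\Re Z(\G))(\Im Z(\F))$ then gives slopes $-\bigl(\tfrac{27s}{16}+1\bigr)^{-1}\in(-1,0)$ for $W_1$ and $\bigl(\tfrac{27s}{4}+1\bigr)^{-1}\in(0,1)$ for $W_2$. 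Without this computation (or an equivalent asymptotic expansion) the hypothesis of Theorem \ref{thm:schmidt}(2) is not verified and existence of the two walls is not established. As a byproduct the slope computation also shows the two walls are distinct (one has negative, the other positive slope), making your separate $\ch_3$ argument unnecessary.
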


This means that the two walls are
\begin{equation}\label{eq:W1}
   W_1 = \{
   (\alpha,\beta) \in P_v \st
   \lambda_{\alpha,\beta,s}(\OO_{\PP^3}(-1)) = \lambda_{\alpha,\beta,s}(\I_{Q/V}(-2))
   \}
\end{equation}
and
\begin{equation}\label{eq:W2}
   W_2 = \{
   (\alpha,\beta) \in P_v \st
   \lambda_{\alpha,\beta,s}(\I_P(-1)) = \lambda_{\alpha,\beta,s}(\OO_V(-2))
   \}
\end{equation}
and the pair of objects defining each wall (close to
$(\alpha_0,\beta_0)$) is unique.

We refrain from writing out the (quartic) equations defining them. They
do depend on $s$, but independently of $s$ they both intersect the
hyperbola \eqref{eq:hyperbola} in
$(\alpha_0,\beta_0)=(\frac{3}{2},-\frac{5}{2})$ and as we will show in
the following proof, $W_1$ has negative slope at $(\alpha_0,\beta_0)$
whereas $W_2$ has positive slope there. Thus $W_1$ lies above $W_2$
($\alpha$ bigger) in the intersection between $P_v$ and a small open
neighborhood of $(\alpha_0,\beta_0)$.

\begin{proof}
We apply Schmidt's Theorem \ref{thm:schmidt}. Firstly, when $\ch(\E) =
v$ we have $\ch_1^\beta(\E) = v_1-\beta v_0 = -\beta > 0$ and
$\overline{\Delta}_H(v)=v_1^2-2v_0v_2=4 > 0$ so the theorem applies. The
first part of the Theorem says that any $\lambda$-wall in $P_v$, having
a point $(\alpha_0,\beta_0)$ with $\nu_{\alpha_0,\beta_0}(v)=0$ in its
closure, must be defined by one of the two pairs $(\F, \G)$ listed in
Proposition \ref{prop:tiltwall}. This leaves $W_1$ and $W_2$ as the only
candidates. Moreover the sub- and quotient objects $\F$ and $\G$
appearing are $\nu_{\alpha_0,\beta_0}$-stable by Lemma
\ref{lem:tiltstability}. Thus the second part of the theorem says that
conversely, $W_1$ and $W_2$ are indeed $\lambda$-walls, provided they
contain points $(\alpha,\beta)$ arbitrarily close to $(\alpha_0,\beta_0)
= (\frac{3}{2},-\frac{5}{2})$ such that $\nu_{\alpha,\beta}(\F)>0$ and
$\nu_{\alpha,\beta}(\G)>0$. It remains to check this last condition.

So let $(\F,\G)$ be one of the pairs $(\OO_{\PP ^3}(-1), \I_{P/V}(-2))$
or $(\I_P(-1), \OO_V(-2))$. The region $P_v$ is bounded by the hyperbola
$\nu_{\alpha,\beta}(v) = 0$ and implicit differentiation readily shows
that this has slope $\frac{d\alpha}{d\beta} = -5/3$ at
$(\alpha_0,\beta_0)$. Similarly $\nu_{\alpha,\beta}(\F)=0$ has slope
$-1$ and $\nu_{\alpha,\beta}(\G)=0$ is just the line $\beta=-5/2$, and
in each case $\nu_{\alpha,\beta}>0$ is the region to the left of these
boundary curves. Thus it suffices to show that our walls have slope
$>-1$ at $(\alpha_0,\beta_0)$. Now each wall $W_i$ is defined by the
condition $\lambda_{\alpha,\beta,s}(\F)=\lambda_{\alpha,\beta,s}(\G)$,
which is equivalent to  
\begin{equation}\label{eq:definingeq}
   (\Re Z(\F))(\Im Z(\G)) = (\Re Z(\G))(\Im Z(\F))
\end{equation}
where $Z=Z_{\alpha,\beta,s}$ is the stability function defined in
\eqref{eq:Z^s} from Section \ref{sec:prelim:constr}. Implicit
differentiation of this equation at $(\alpha_0,\beta_0)$ gives, after
some work, that $W_1$ has slope
\begin{equation}\label{eq:slopeW1}
   - \left(\frac{27s}{16} + 1\right)^{-1} \in (-1,0)	
\end{equation}
and $W_2$ has slope
\begin{equation}\label{eq:slopeW2}
   \left(\frac{27s}{4} + 1\right)^{-1} \in (0,1)
\end{equation}
both of which are $>-1$, and we are done.
\end{proof}

\begin{figure}
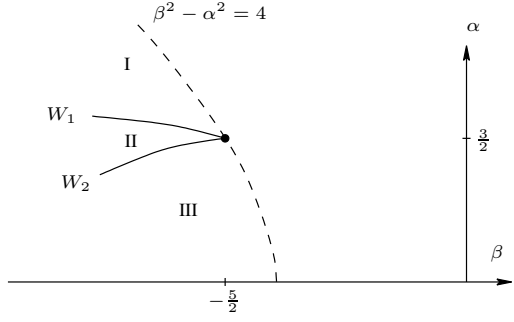

\centering
\begin{minipage}{0.7\textwidth}
\centering
{\pgfkeys{/pgf/fpu/.try=false}%
\ifx\XFigwidth\undefined\dimen1=0pt\else\dimen1\XFigwidth\fi
\divide\dimen1 by 3174
\ifx\XFigheight\undefined\dimen3=0pt\else\dimen3\XFigheight\fi
\divide\dimen3 by 1957
\ifdim\dimen1=0pt\ifdim\dimen3=0pt\dimen1=3946sp\dimen3\dimen1
  \else\dimen1\dimen3\fi\else\ifdim\dimen3=0pt\dimen3\dimen1\fi\fi
\tikzpicture[x=+\dimen1, y=+\dimen3]
{\ifx\XFigu\undefined\catcode`\@11
\def\temp{\alloc@1\dimen\dimendef\insc@unt}\temp\XFigu\catcode`\@12\fi}
\XFigu3946sp
\ifdim\XFigu<0pt\XFigu-\XFigu\fi
\pgfdeclarearrow{
  name = xfiga2,
  parameters = {
    \the\pgfarrowlinewidth \the\pgfarrowlength \the\pgfarrowwidth\ifpgfarrowopen o\fi},
  defaults = {
	  line width=+7.5\XFigu, length=+120\XFigu, width=+60\XFigu},
  setup code = {
    \dimen7 2.6\pgfarrowlength\pgfmathveclen{\the\dimen7}{\the\pgfarrowwidth}
    \dimen7 2\pgfarrowwidth\pgfmathdivide{\pgfmathresult}{\the\dimen7}
    \dimen7 \pgfmathresult\pgfarrowlinewidth
    \pgfarrowssettipend{+\dimen7}
    \pgfarrowssetbackend{+-1.25\pgfarrowlength}
    \dimen9 -\pgfarrowlength\advance\dimen9 by-0.5\pgfarrowlinewidth
    \pgfarrowssetlineend{+\dimen9}
    \dimen9 -\pgfarrowlength\advance\dimen9 by-0.5\pgfarrowlinewidth
    \pgfarrowssetvisualbackend{+\dimen9}
    \pgfarrowshullpoint{+\dimen7}{+0pt}
    \pgfarrowsupperhullpoint{+-1.25\pgfarrowlength}{+0.5\pgfarrowwidth}
    \pgfarrowssavethe\pgfarrowlinewidth
    \pgfarrowssavethe\pgfarrowlength
    \pgfarrowssavethe\pgfarrowwidth
  },
  drawing code = {\pgfsetdash{}{+0pt}
    \ifdim\pgfarrowlinewidth=\pgflinewidth\else\pgfsetlinewidth{+\pgfarrowlinewidth}\fi
    \pgfpathmoveto{\pgfqpoint{-1.25\pgfarrowlength}{-0.5\pgfarrowwidth}}
    \pgfpathlineto{\pgfqpoint{0pt}{0pt}}
    \pgfpathlineto{\pgfqpoint{-1.25\pgfarrowlength}{0.5\pgfarrowwidth}}
    \pgfpathlineto{\pgfqpoint{-\pgfarrowlength}{0pt}}
    \pgfpathclose
    \ifpgfarrowopen\pgfusepathqstroke\else\pgfsetfillcolor{.}
	\ifdim\pgfarrowlinewidth>0pt\pgfusepathqfillstroke\else\pgfusepathqfill\fi\fi
  }
}
\clip(588,-2746) rectangle (3762,-789);
\tikzset{inner sep=+0pt, outer sep=+0pt}
\pgfsetbeveljoin
\pgfsetlinewidth{+7.5\XFigu}
\pgfsetstrokecolor{black}
\draw (1951,-1649)--(1948,-1648)--(1940,-1646)--(1928,-1643)--(1910,-1638)--(1887,-1632)
  --(1861,-1625)--(1834,-1618)--(1805,-1610)--(1778,-1603)--(1752,-1597)--(1727,-1591)
  --(1704,-1585)--(1682,-1581)--(1661,-1576)--(1640,-1572)--(1619,-1568)--(1599,-1565)
  --(1579,-1562)--(1559,-1559)--(1539,-1556)--(1516,-1553)--(1492,-1549)--(1466,-1546)
  --(1438,-1543)--(1407,-1540)--(1374,-1536)--(1339,-1532)--(1302,-1529)--(1266,-1525)
  --(1230,-1521)--(1198,-1518)--(1171,-1516)--(1150,-1514)--(1136,-1512)--(1128,-1511)
  --(1124,-1511);
\draw (1951,-1649)--(1950,-1649)--(1947,-1650)--(1939,-1651)--(1925,-1653)--(1905,-1655)
  --(1880,-1659)--(1851,-1663)--(1821,-1668)--(1790,-1673)--(1760,-1678)--(1731,-1683)
  --(1705,-1688)--(1681,-1692)--(1658,-1697)--(1637,-1702)--(1616,-1707)--(1596,-1712)
  --(1576,-1718)--(1558,-1723)--(1540,-1729)--(1521,-1735)--(1501,-1742)--(1480,-1750)
  --(1458,-1759)--(1434,-1768)--(1407,-1779)--(1379,-1790)--(1350,-1802)--(1319,-1815)
  --(1288,-1828)--(1259,-1840)--(1233,-1852)--(1210,-1861)--(1193,-1869)--(1181,-1874)
  --(1174,-1877)--(1171,-1878);
\pgfsetfillcolor{black}
\pgftext[base,left,at=\pgfqpointxy{929}{-1964}] {\fontsize{7}{8.4}\usefont{T1}{ptm}{m}{n}$W_2$}
\pgftext[base,left,at=\pgfqpointxy{840}{-1538}] {\fontsize{7}{8.4}\usefont{T1}{ptm}{m}{n}$W_1$}
\pgftext[base,left,at=\pgfqpointxy{1327}{-1706}] {\fontsize{7}{8.4}\usefont{T1}{ptm}{m}{n}II}
\pgftext[base,left,at=\pgfqpointxy{1664}{-2136}] {\fontsize{7}{8.4}\usefont{T1}{ptm}{m}{n}III}
\pgftext[base,left,at=\pgfqpointxy{1322}{-1224}] {\fontsize{7}{8.4}\usefont{T1}{ptm}{m}{n}I}
\filldraw  (1950,-1650) circle [radius=+23];
\pgfsetarrows{[line width=7.5\XFigu, width=37\XFigu, length=73\XFigu]}
\pgfsetarrowsend{xfiga2}
\draw (3450,-2550)--(3450,-1050);
\draw (600,-2550)--(3750,-2550);
\pgfsetarrowsend{}
\draw (1950,-2525)--(1950,-2575);
\draw (3425,-1650)--(3475,-1650);
\pgfsetdash{{+60\XFigu}{+60\XFigu}}{++0pt}
\draw (2269,-2549)--(2269,-2546)--(2269,-2539)--(2268,-2527)--(2267,-2512)--(2266,-2494)
  --(2265,-2475)--(2263,-2456)--(2261,-2438)--(2259,-2420)--(2256,-2403)--(2252,-2386)
  --(2249,-2368)--(2244,-2349)--(2240,-2333)--(2235,-2317)--(2230,-2299)--(2225,-2280)
  --(2219,-2260)--(2213,-2239)--(2206,-2217)--(2198,-2194)--(2191,-2171)--(2183,-2147)
  --(2175,-2124)--(2166,-2100)--(2158,-2078)--(2150,-2055)--(2142,-2034)--(2134,-2013)
  --(2127,-1993)--(2119,-1974)--(2112,-1957)--(2105,-1940)--(2098,-1923)--(2090,-1906)
  --(2083,-1888)--(2074,-1870)--(2066,-1852)--(2057,-1834)--(2047,-1815)--(2037,-1796)
  --(2027,-1776)--(2016,-1757)--(2005,-1737)--(1994,-1718)--(1982,-1698)--(1970,-1679)
  --(1958,-1659)--(1945,-1639)--(1932,-1619)--(1919,-1599)--(1907,-1582)--(1895,-1564)
  --(1882,-1545)--(1869,-1526)--(1855,-1506)--(1840,-1485)--(1825,-1464)--(1810,-1442)
  --(1794,-1420)--(1777,-1398)--(1761,-1375)--(1744,-1352)--(1727,-1330)--(1711,-1308)
  --(1694,-1286)--(1678,-1265)--(1663,-1244)--(1648,-1225)--(1633,-1206)--(1619,-1188)
  --(1606,-1170)--(1593,-1154)--(1581,-1139)--(1569,-1124)--(1552,-1103)--(1537,-1084)
  --(1521,-1066)--(1506,-1048)--(1491,-1031)--(1476,-1013)--(1460,-995)--(1443,-977)
  --(1426,-959)--(1410,-942)--(1396,-927)--(1384,-915)--(1376,-906)--(1371,-901)
  --(1369,-899);
\pgftext[base,left,at=\pgfqpointxy{3600}{-2400}] {\fontsize{7}{8.4}\usefont{T1}{ptm}{m}{n}$\beta$}
\pgftext[base,left,at=\pgfqpointxy{3450}{-975}] {\fontsize{7}{8.4}\usefont{T1}{ptm}{m}{n}$\alpha$}
\pgftext[base,left,at=\pgfqpointxy{1844}{-2700}] {\fontsize{7}{8.4}\usefont{T1}{ptm}{m}{n}$-\tfrac{5}{2}$}
\pgftext[base,left,at=\pgfqpointxy{3507}{-1690}] {\fontsize{7}{8.4}\usefont{T1}{ptm}{m}{n}$\tfrac{3}{2}$}
\pgftext[base,left,at=\pgfqpointxy{1500}{-900}] {\fontsize{7}{8.4}\usefont{T1}{ptm}{m}{n}$\beta^2-\alpha^2=4$}
\endtikzpicture}%
\caption{The two walls $W_1$ and $W_2$ (solid) for $\lambda$-stability
separating three chambers, together with the hyperbola (dashed) from
Equation \ref{eq:hyperbola}.}\label{fig:lambdawalls}
\end{minipage}
\end{figure}

We are now in position to prove Theorem \ref{thm:main1}. By Proposition
\ref{prop:lambda-walls} there exists an open (connected) neighborhood $N
\subset \RR_{>0}\times \RR$ around the $\beta<0$ branch of the hyperbola
\eqref{eq:hyperbola}, such that the only $\lambda$-walls in $N\cap P_v$
are $W_1$ and $W_2$, defined in \eqref{eq:W1} and \eqref{eq:W2}.
Moreover it follows from the slopes \eqref{eq:slopeW1} and
\eqref{eq:slopeW2} that, after shrinking $N$ further if necessary, $W_1$
lies above ($\alpha$ bigger) $W_2$ throughout $N\cap P_v$. Thus the two
walls separate $N$ into three chambers, which we label I, II and III in
order of decreasing $\alpha$. 

\begin{proof}[Proof of Theorem \ref{thm:main1} (I)]
By Proposition \ref{prop:tiltwall} there is a single semicircular wall
(in the region $\beta<0$) for $\nu$-stability. It follows from Theorem
\ref{thm:refine} that the class of $\lambda_{\alpha,\beta,s}$-stable
objects in $\mathcal A^{\alpha,\beta}(\PP^3)$ for $(\alpha,\beta)$ in
chamber I coincides with the class of $\nu_{\alpha,\beta}$-stable
objects in $\Coh^{\beta}(\PP^3)$ for $(\alpha,\beta)$ outside the single
$\nu$-wall (up to shrinking $N$ even further if necessary).

Moreover, for $\alpha$ sufficiently big, the $\nu$-stable objects in
$\Coh^{\beta}(\PP^3)$ are exactly the $\mu$-stable coherent sheaves
(Proposition \ref{largevoltilt}). For Chern character $v=(1,0,-2,2)$
these are the ideals $\I_Y$ with $Y\in \hilb^{2m+2}(\PP^3)$.
\end{proof}

\begin{proof}[Proof of Theorem \ref{thm:main1} (II)]
Let $\E$ be $\lambda_{\alpha,\beta,s}$-stable for $(\alpha,\beta)$ in
chamber II. Since semistability is a closed property, $\E$ is semistable
on the wall $W_1$. If $\E$ is stable on the wall, then it is also stable
in chamber I hence it is an ideal sheaf in $\hilb^{2m+2}(\PP^3)$ by part
(I). Such an ideal remains stable on the wall if and only if it is not
an extension of the type \eqref{eq:extension1}, that is if and only if
it is the ideal of a nonplanar subscheme. This is case (II)(i) in the
Theorem.
	
If on the other hand $\E$ is stable in chamber II, but strictly
semistable on $W_1$, then by Proposition \ref{prop:lambda-walls} it is a
nonsplit extension of the pair
\begin{equation}\label{eq:firstpair}
   \big(\OO_{\PP^3}(-1),\I_{P/V}(-2)\big)
\end{equation} 
and we determine the direction of the extension (which object is the
subobject and which is the quotient) as follows: we claim that
\begin{equation}\label{eq:lambda-inequality}
   \lambda_{\alpha,\beta,s}(\I_{P/V}(-2))
   < \lambda_{\alpha,\beta,s}(\OO_{\PP^3}(-1))
\end{equation}
for all $(\alpha,\beta)$ in chamber II sufficiently close to
$(\alpha_0,\beta_0)$. Granted this, it follows that for $\E$ to be
stable in chamber II it must be a nonsplit extension as in case (II)(ii)
in the Theorem. Conversely it follows from Proposition
\ref{prop:crossing} that every such nonsplit extension is indeed stable
in chamber II. To verify
\eqref{eq:lambda-inequality} we let
\begin{equation}\label{eq:Phi}
   \Phi(\alpha,\beta)=\Re Z(\F)\Im Z(\G)-\Re Z(\G)\Im Z(\F)
\end{equation}
with $Z = Z_{\alpha,\beta,s}$,  $\F = \OO_{\PP^3}(-1)$ and $\G =
\I_{P/V}(-2)$. Thus $W_1$ is defined by $\Phi(\alpha,\beta)=0$ and
\eqref{eq:lambda-inequality} is equivalent to $\Phi(\alpha,\beta)<0$. It
thus suffices to check that the partial derivative of $\Phi$ with
respect to $\alpha$ is positive at $(\alpha_0,\beta_0)$. An explicit
computation yields in fact
\begin{equation*}
   \frac{\partial \Phi}{\partial \alpha}(\alpha_0,\beta_0) =2+\frac{27s}{8} > 0.
\end{equation*}

It remains only to show uniqueness of the nonsplit extensions
$\F_{P,V}$, that is
\begin{equation*}
   \dim\Ext^1_{\PP^3}(\OO_{\PP^3}(-1),\I_{P/V}(-2))=1.
\end{equation*}
But this space is $H^1(\I_{P/V}(-1))$, which is isomorphic to
$H^0(k(P)) = k$ via the short exact sequence
\begin{equation*}
   0 \to \I_{P/V}(-1) \to \OO_V(-1) \to k(P) \to 0.
\end{equation*}
\end{proof}

\begin{proof}[Proof of Theorem \ref{thm:main1} (III)]
Let $\E$ be $\lambda_{\alpha,\beta,s}$-stable for $(\alpha,\beta)$ in
chamber III. Since semistability is a closed property, $\E$ is
semistable on the wall $W_2$. If $\E$ is stable on $W_2$, then it is
stable in chamber II. This means two things: first, by part (II) of the
Theorem $\E$ is either an ideal sheaf of a nonplanar subscheme or a
nonsplit extension $\F_{P,V}$ as in case (II)(ii). Second, to remain
stable on $W_2$ the object $\E$ cannot be in a short exact sequence of
the type \eqref{eq:extension2} ruling out ideal sheaves of plane conics
union a point. Also the sheaves $\F_{P,V}$ sit in short exact sequences
of this type, as we show in Lemma \ref{lem:diagramF} below (the vertical
short exact sequence in the middle), and so are ruled out as well. Hence
$\E$ is an ideal sheaf of a disjoint pair of lines as claimed in
(III)(i). 

If on the other hand $\E$ is strictly semistable on $W_2$, then by
Proposition \ref{prop:lambda-walls} $\E$ is a nonsplit extension (in
either direction) of the pair
\begin{equation*}
   \big(\I_{P}(-1),\OO_V(-2)\big).
\end{equation*}
Now we claim that 
\begin{equation*}
   \lambda_{\alpha,\beta,s}(\OO_V(-2)<\lambda_{\alpha,\beta,s}(\I_{P}(-1))
\end{equation*}
for all $(\alpha,\beta)$ in chamber III sufficiently close to
$(\alpha_0,\beta_0)$. We prove this as in part II above, by partial
differentiation of $\Phi$ defined in \eqref{eq:Phi}, this time with $\F
= \I_{P}(-1)$ and $\G = \OO_V(-2)$. We find
\begin{equation*}
   \frac{\partial \Phi}{\partial \alpha}(\alpha_0,\beta_0)
   = \frac{1}{2}+\frac{27s}{8} > 0.
\end{equation*}
As before we conclude that $\E$ is a nonsplit extension as in (III)(ii)
and by Proposition \ref{prop:crossing} all such extensions are stable.

It remains to verify uniqueness of the extensions $\G_{P,V}$, i.e.
\begin{equation*}
   \dim\Ext^1_{\PP^3}(\I_{P}(-1),\OO_{V}(-2))=1
\end{equation*}
when $P\in V$. For this first apply $\Hom(-, \OO_V(-1))$ to the short
exact sequence
\begin{equation*}
   0 \to \I_P \to \OO_{\PP^3} \to k(P) \to 0
\end{equation*}
to obtain a long exact sequence which together with the vanishing of
$H^1(\OO_V(-1))$ and $H^2(\OO_V(-1))$ gives an isomorphism
\begin{equation*}
   \Ext^1_{\PP^3}(\I_{P},\OO_{V}(-1)) \iso \Ext^2(k(P), \OO_V(-1))
\end{equation*}
and ignoring twists, as these are not seen by $k(P)$, the right hand
side is Serre dual to $\Ext^1(\OO_V, k(P))$. This is one dimensional as
is seen by applying $\Hom(-, k(P))$ to the sequence
\begin{equation*}
   0 \to \OO_{\PP^3}(-1) \to \OO_{\PP^3} \to \OO_V \to 0.
\end{equation*}
\end{proof}

\subsection{The special sheaves}

Let $\F = \F_{P,V}$ and $\G = \G_{P,V}$ denote sheaves given by nonsplit
extensions of the form \eqref{eq:FpV} and \eqref{eq:GpV}, respectively.
The definition through (unique) nonsplit extensions is indirect and it
is useful to have alternative constructions available. We give such
constructions here and compute the spaces of first order infinitesimal
deformations.

\begin{lem}\label{lem:diagramF}
There is a commutative diagram with exact rows and columns as follows:
\begin{equation*}
   \begin{tikzcd}
   && 0 \ar[d] & 0 \ar[d] \\
   && \I_{P}(-1) \ar[d]\ar[r,equal] & \I_{P}(-1) \ar[d]\\
   0 \ar[r] &  \I_{P/V}(-2) \ar[d,equal]\ar[r] & \F \ar[d]\ar[r] & \OO_{\PP^3}(-1) \ar[d]\ar[r] & 0 \\
   0 \ar[r] & \I_{P/V}(-2) \ar[r] & \OO_V(-2) \ar[d]\ar[r] & k(P) \ar[d]\ar[r] & 0 \\
   && 0 & 0
   \end{tikzcd}
\end{equation*}
\end{lem}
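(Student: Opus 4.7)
The plan is to realize $\F_{P,V}$ explicitly as a fibre product; the diagram then reads off from that description. Two of the four small sequences are completely canonical: the bottom row is the twist by $\OO(-2)$ of $0 \to \I_{P/V} \to \OO_V \to k(P) \to 0$, and the right column is the twist by $\OO(-1)$ of $0 \to \I_P \to \OO_{\PP^3} \to k(P) \to 0$. The remaining task is to construct the middle column $0 \to \I_P(-1) \to \F \to \OO_V(-2) \to 0$ and to check commutativity of the four small squares.

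To this end I would first identify $\F$ with the fibre product $\OO_{\PP^3}(-1)\times_{k(P)}\OO_V(-2)$ taken with respect to the two canonical surjections. Applying $\Hom(\OO_{\PP^3}(-1),-)$ to the bottom row yields an exact sequence
\begin{equation*}
\Hom(\OO_{\PP^3}(-1),k(P)) \to \Ext^1(\OO_{\PP^3}(-1),\I_{P/V}(-2)) \to \Ext^1(\OO_{\PP^3}(-1),\OO_V(-2)).
\end{equation*}
The rightmost group equals $H^1(V,\OO_V(-1))$, which vanishes since $V\iso\PP^2$; the middle group is one-dimensional by the $\Ext^1$ computation at the end of the proof of Theorem \ref{thm:main1}(II); and the leftmost group is also one-dimensional. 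Hence the first map is an isomorphism, and the nonsplit extension class defining $\F$ is the pullback of the bottom row along an (essentially unique) nonzero morphism $\OO_{\PP^3}(-1)\to k(P)$. By the standard construction of this pullback, this exhibits $\F$ as the claimed fibre product.

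With this identification the rest of the diagram is formal. Since both structure maps $\OO_{\PP^3}(-1)\to k(P)$ and $\OO_V(-2)\to k(P)$ are surjective, the fibre product fits in the two short exact sequences
\begin{equation*}
0 \to \I_{P/V}(-2) \to \F \to \OO_{\PP^3}(-1) \to 0, \qquad 0 \to \I_P(-1) \to \F \to \OO_V(-2) \to 0,
\end{equation*}
in which each left term is the kernel of the opposite projection. These supply the middle row (recovering the defining extension of $\F_{P,V}$) and the new middle column. Commutativity of the four squares and exactness along the two new columns/rows are immediate from the universal property of the fibre product, or can alternatively be obtained as a quick snake-lemma chase. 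The only step carrying genuine content is the Ext vanishing $\Ext^1(\OO_{\PP^3}(-1),\OO_V(-2))=0$ that produces $\F$ as a fibre product; after that the diagram assembles mechanically.
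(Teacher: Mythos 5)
Your proof is correct and follows the same core strategy as the paper: both realize $\F$ as the fibre product of the two canonical surjections $\OO_{\PP^3}(-1)\to k(P)$ and $\OO_V(-2)\to k(P)$, after which the four squares, the middle column and the exactness claims are formal. The one step where you genuinely diverge is the identification of this fibre product with the nonsplit extension $\F_{P,V}$. The paper builds the fibre product first and then checks nonsplitness of the middle row directly: if it split, the middle column twisted by $\OO_{\PP^3}(1)$ would read $0\to\I_P\to\I_{P/V}(-1)\oplus\OO_{\PP^3}\to\OO_V(-1)\to 0$, and taking global sections gives a contradiction. You instead apply $\Hom(\OO_{\PP^3}(-1),-)$ to the bottom row and use $\Ext^1(\OO_{\PP^3}(-1),\OO_V(-2))=H^1(\OO_V(-1))=0$ together with the two one-dimensionality statements to conclude that the connecting map $\Hom(\OO_{\PP^3}(-1),k(P))\to\Ext^1(\OO_{\PP^3}(-1),\I_{P/V}(-2))$ is an isomorphism, so the unique nonsplit class is the pullback of the bottom row along a nonzero map to $k(P)$, i.e.\ the fibre product. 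Both verifications are sound; yours has the small advantage of explaining structurally why the fibre product cannot split (the connecting map is injective), at the cost of invoking the $\Ext^1$ dimension count from the proof of Theorem \ref{thm:main1}(II), whereas the paper's check is self-contained.
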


\begin{proof}
Up to identifying the skyscraper sheaf $k(P)$ with any of its twists,
there are canonical short exact sequences as in the bottom row and the
rightmost column. The diagram can then be completed by letting $\F$ be
the fiber product as laid out by the square in the bottom right corner.
It remains only to verify that the middle row is nonsplit. But if it
were split the middle column twisted by $\OO_{\PP^3}(1)$ would be a
short exact sequence of the form
\begin{equation*}
   0 \to \I_P \to \I_{P/V}(-1) \oplus  \OO_{\PP^3} \to \OO_V(-1) \to 0.
\end{equation*}
Taking global sections this yields a contradictory left exact sequence
in which all terms vanish except $H^0(\OO_{\PP^3}) = k$.
\end{proof}

\begin{pro}\label{pro:dimExtF}
We have $\dim \Ext^1(\F, \F) = 11$.
\end{pro}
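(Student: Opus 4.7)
The plan is to apply $\Hom(-,\F)$ to the short exact sequence
\begin{equation*}
   0 \to \F \to \OO_V(-2) \oplus \OO_{\PP^3}(-1) \to k(P) \to 0
\end{equation*}
implicit in Lemma~\ref{lem:diagramF}: indeed $\F$ there appears as the fiber product of the two surjections $\OO_V(-2) \onto k(P)$ and $\OO_{\PP^3}(-1) \onto k(P)$ forming the bottom-right square of the diagram, and this gives the displayed Mayer--Vietoris sequence. The resulting long exact sequence reduces the problem to computing $\Ext^i$ of each of the three outer terms against $\F$.

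First I would observe that $\Ext^i(\OO_{\PP^3}(-1),\F) = H^i(\F(1))$ vanishes for all $i$: twisting \eqref{eq:FpV} by $\OO_{\PP^3}(1)$ and taking cohomology, the relevant connecting map $k \iso H^0(\OO_{\PP^3}) \to H^1(\I_{P/V}(-1)) \iso k$ is Yoneda product with the nonzero extension class of \eqref{eq:FpV}, hence an isomorphism, which forces the vanishing. The other two groups $\Ext^i(\OO_V(-2),\F)$ and $\Ext^i(k(P),\F)$ I would compute routinely by applying $\Hom(-,\F)$ to \eqref{eq:FpV} and chasing long exact sequences using the standard resolution of $\OO_V(-2)$ on $\PP^3$ and the cohomology of the relevant twists of $\I_{P/V}$; the dimensions come out as $(0,8,0,0)$ and $(0,1,3,3)$ respectively for $i = 0,1,2,3$.

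The final step exploits the non-splitness of the displayed sequence: both $\OO_V(-2)$ and $\OO_{\PP^3}(-1)$ are pure sheaves, so $\Hom(k(P),\, \OO_V(-2)\oplus \OO_{\PP^3}(-1)) = 0$ and no splitting can exist. Consequently the connecting homomorphism $\Hom(\F,\F) \iso k \to \Ext^1(k(P),\F) \iso k$ sends $\id_\F$ to the nonzero class of the displayed sequence, and so is an isomorphism; this in turn forces the next map $\Ext^1(k(P),\F) \to \Ext^1(\OO_V(-2)\oplus \OO_{\PP^3}(-1),\F) \iso k^8$ to vanish, and the long exact sequence collapses to a short exact sequence $0 \to k^8 \to \Ext^1(\F,\F) \to k^3 \to 0$, giving $\dim \Ext^1(\F,\F) = 11$. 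The main obstacle is simply the bookkeeping in the two intermediate $\Ext$ computations, but each reduces straightforwardly to standard long exact sequences.
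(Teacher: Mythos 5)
Your proof is correct, and it takes a genuinely different route from the paper's. The paper applies $\Hom(-,\F)$ to the \emph{middle row} of the diagram in Lemma~\ref{lem:diagramF}, i.e.\ to the defining extension \eqref{eq:FpV}, reduces to bounding $\Ext^1(\I_{P/V}(-2),\F)$, and in fact only proves the inequality $\dim\Ext^1(\F,\F)\le 11$ this way, deferring the reverse inequality to the identification of $\Ext^1(\F,\F)$ with a tangent space of the $11$-dimensional moduli space $\MII$. You instead use the Mayer--Vietoris sequence $0\to\F\to\OO_V(-2)\oplus\OO_{\PP^3}(-1)\to k(P)\to 0$ attached to the fiber-product square in the same lemma, which is a legitimate consequence of that square since both maps to $k(P)$ are surjective. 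I checked your intermediate dimensions and they are all right: $\Ext^i(\OO_{\PP^3}(-1),\F)=H^i(\F(1))=0$ for all $i$ (your cup-product argument for the connecting map is the correct one, and recovers the paper's own vanishing of $H^1(\F(1))$ and $H^2(\F(1))$); $\Ext^i(\OO_V(-2),\F)=(0,8,0,0)$, which follows from $h^\bullet(\F(2))=(4,0,0,0)$, $h^\bullet(\F(3))=(12,0,0,0)$ and $\Hom(\OO_V(-2),\F)=0$; and $\Ext^i(k(P),\F)=(0,1,3,3)$. The last of these is the one place deserving more care than ``routine'': $\F$ is not locally free at $P$, so one should either compute the local $\mathcal{E}\mathit{xt}$'s of the stalk $\F_P=\ker(\OO_{V,P}\oplus\OO_{\PP^3,P}\to k(P))$ and apply Serre duality, or apply $\Hom(k(P),-)$ (not $\Hom(-,\F)$, as you wrote) to \eqref{eq:FpV} and track the connecting maps; either way the numbers come out as you state, and they are consistent with $\chi(k(P),\F)=-1$. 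Your endgame with the connecting homomorphism $\Hom(\F,\F)\to\Ext^1(k(P),\F)$ is also correct (injectivity already follows from $\Hom(\OO_V(-2)\oplus\OO_{\PP^3}(-1),\F)=0$). The payoff of your route is that it yields the exact value $11$ directly, together with $\Hom(\F,\F)=k$, $\Ext^2(\F,\F)=k^3$ and $\Ext^3(\F,\F)=0$ as byproducts, whereas the paper's written argument only establishes the upper bound.
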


\begin{proof}
We will actually only prove that the dimension is at most $11$. The
opposite inequality may be shown by similar techniques, although it
follows from viewing $\Ext^1(\F, \F)$ as a Zariski tangent space to the
$11$-dimensional moduli space $\MII$ studied in the next section.

Apply $\Hom(-, \F)$ to the middle row in the diagram in Lemma
\ref{lem:diagramF}. This yields a long exact sequence
\begin{equation*}
   \cdots
   \to H^1(\F(1))
   \to \Ext^1(\F, \F)
   \to \Ext^1(\I_{P/V}(-2), \F)
   \to H^2(\F(1))
   \to \cdots
\end{equation*}
and from the middle column of the diagram we compute
$H^1(\F(1)) = H^2(\F(1)) = 0$. Thus we proceed to show that
$\dim \Ext^1(\I_{P/V}(-2), \F) \le 11$.

Apply $\Hom(\I_{P/V}(-2), -)$ to the middle row in the diagram.
This yields a long exact sequence:
\begin{equation*}
   \cdots
   \to \Ext^1(\I_{P/V}, \I_{P/V})
   \to \Ext^1(\I_{P/V}(-2), \F)
   \to \Ext^1(\I_{P/V}(-1), \OO_{\PP^3})
   \to \cdots
\end{equation*}
The space on the right is Serre dual to $H^2(\I_{P/V}(-5)) \iso
H^2(\OO_V(-5))$, which again on $V$ is Serre dual to $H^0(\OO_V(2))$.
This has dimension $6$. At least heuristically, the space on the left
should have dimension $5$, as it may be viewed as a tangent space to the
incidence variety $I\subset\PP^3\times\check\PP^3$ seen as a moduli
space for the sheaves $\I_{P/V}$. More directly we may apply
$\Hom(\I_{P/V}, -)$ to the Koszul complex on $V$
\begin{equation*}
   0 \to \OO_V(-2) \to \OO_V(-1)^{\oplus 2} \to \I_{P/V} \to 0
\end{equation*}
to obtain a long exact sequence
\begin{equation*}
   \cdots
   \to \underbrace{\Ext^1(\I_{P/V}, \OO_V(-1))^{\oplus 2}}_{2\cdot 2}
   \to \Ext^1(\I_{P/V}, \I_{P/V})
   \to \underbrace{\Ext^2(\I_{P/V}, \OO_V(-2))}_1
   \to \cdots
\end{equation*}
where the indicated dimensions may be computed by applying
$\Hom(\I_{P/V}(d), -)$ (for $d=1,2$) to the sequence
\begin{equation*}
   0 \to \OO_{\PP^3}(-1) \to \OO_{\PP^3} \to \OO_V \to 0.
\end{equation*}
We skip further details. It follows then that $\dim \Ext^1(\I_{P/V},
\I_{P/V}) \le 5$ and so $\dim \Ext^1(\I_{P/V}(-2), \F)$ is at most
$5+6=11$.
\end{proof}

\begin{lem}\label{lem:diagramG}
There is a commutative diagram with exact rows and columns as follows:
\begin{equation*}
   \begin{tikzcd}
   & & 0 \ar[d] & 0 \ar[d] \\
   & & \OO_{\PP^3}(-2) \ar[d] \ar[r, equal] & \OO_{\PP^3}(-2) \ar[d] \\
   0 \ar[r] & \OO_V(-2) \ar[d, equal] \ar[r] & \G \ar[d] \ar[r] & \I_P(-1) \ar[d] \ar[r] & 0 \\
   0 \ar[r] & \OO_V(-2) \ar[r] & \Omega^1_V \ar[r] \ar[d] & \I_{P/V}(-1) \ar[r] \ar[d] & 0 \\
   & & 0 & 0
   \end{tikzcd}
\end{equation*}
\end{lem}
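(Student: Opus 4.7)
The plan is to follow the fiber-product-style argument of Lemma \ref{lem:diagramF}, but in order to avoid having to verify non-splitness of the middle row \emph{a posteriori}, I would take the defining non-split extension $\G=\G_{P,V}$ (the middle row) as the starting point and build the rest of the diagram around it.

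First, the rightmost column: it is the twist by $\OO_{\PP^3}(-1)$ of the canonical sequence $0\to \OO_{\PP^3}(-1)\to \I_P\to \I_{P/V}\to 0$ arising from $\I_V=\OO_{\PP^3}(-1)\subseteq \I_P$ (using $P\in V$). For the bottom row, work on $V\iso \PP^2$: pick homogeneous coordinates with $P=(1:0:0)$, so that $\I_{P/V}$ has the Koszul resolution $0\to \OO_V(-2)\to \OO_V(-1)^{\oplus 2}\to \I_{P/V}\to 0$. Embed this resolution into the Euler sequence $0\to \Omega^1_V\to \OO_V(-1)^{\oplus 3}\to \OO_V\to 0$ using the last two factors of $\OO_V(-1)^{\oplus 3}$ and $\I_{P/V}\hookrightarrow \OO_V$. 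The snake lemma then delivers $0\to \Omega^1_V/\OO_V(-2)\to \OO_V(-1)\to k(P)\to 0$, whose kernel term is $\I_{P/V}(-1)$ (kernel of evaluation at $P$); this identifies $\Omega^1_V/\OO_V(-2)\iso \I_{P/V}(-1)$ and yields the bottom row. The extension is non-split because $\Omega^1_V$ is locally free while $\OO_V(-2)\oplus \I_{P/V}(-1)$ is not locally free at $P$. A standard calculation using the Koszul resolution of $\I_{P/V}$ gives $\Ext^1(\I_{P/V}(-1),\OO_V(-2))=k$, so \emph{any} non-split extension of $\I_{P/V}(-1)$ by $\OO_V(-2)$ is isomorphic to $\Omega^1_V$.

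For the middle column, apply $\Hom(\OO_{\PP^3}(-2),-)$ to the middle row: the obstruction to lifting $\OO_{\PP^3}(-2)\hookrightarrow \I_P(-1)$ lives in $\Ext^1(\OO_{\PP^3}(-2),\OO_V(-2))=H^1(\OO_V)=0$, so a lift $\iota\colon \OO_{\PP^3}(-2)\to \G$ exists, and it is injective because its composition with $\G\to \I_P(-1)$ is. Apply the snake lemma to the map of short exact sequences whose top row is $0\to 0\to \OO_{\PP^3}(-2)\xrightarrow{\id}\OO_{\PP^3}(-2)\to 0$ and whose bottom row is the middle row, with vertical arrows $0$, $\iota$, and the canonical inclusion. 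This produces an exact sequence $0\to \OO_V(-2)\to \Q\to \I_{P/V}(-1)\to 0$ with $\Q=\G/\iota(\OO_{\PP^3}(-2))$, and $\Q\twoheadrightarrow \I_{P/V}(-1)$ factors the composite $\G\twoheadrightarrow \I_P(-1)\twoheadrightarrow \I_{P/V}(-1)$, so the diagram built so far commutes. To identify $\Q\iso \Omega^1_V$ it suffices, by the classification of extensions above, to rule out the split case: a splitting $r\colon \Q\to \OO_V(-2)$, composed with $\G\twoheadrightarrow \Q$, would retract the inclusion $\OO_V(-2)\hookrightarrow \G$ of the middle row (directly from the snake-induced map $\OO_V(-2)\to \Q$), contradicting the defining non-splitness of $\G_{P,V}$.

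The main difficulty is the final identification of $\Q$ with $\Omega^1_V$: it requires combining the vanishing $\Ext^1(\OO_{\PP^3}(-2),\OO_V(-2))=0$ to obtain the lift, the one-dimensionality of $\Ext^1(\I_{P/V}(-1),\OO_V(-2))$ to reduce the question to the split-or-not dichotomy, and the non-splitness of $\G_{P,V}$ to rule out the split case for $\Q$.
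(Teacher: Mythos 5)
Your construction of the bottom row and the rightmost column, the existence and injectivity of the lift $\iota$, the snake-lemma sequence $0\to\OO_V(-2)\to\Q\to\I_{P/V}(-1)\to 0$ for $\Q=\G/\iota(\OO_{\PP^3}(-2))$, and the argument that this sequence is non-split are all correct. The gap is in the final identification $\Q\iso\Omega^1_V$. Your computation ``$\Ext^1(\I_{P/V}(-1),\OO_V(-2))=k$'' via the Koszul resolution on $V$ computes $\Ext^1_V$, which classifies extensions of $\OO_V$-modules; but $\Q$ is a priori only a sheaf on $\PP^3$, and the relevant group $\Ext^1_{\PP^3}(\I_{P/V}(-1),\OO_V(-2))$ is \emph{two}-dimensional. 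Indeed, applying $\Hom_{\PP^3}(-,\OO_V(-2))$ to $0\to\I_{P/V}(-1)\to\OO_V(-1)\to k(P)\to 0$ yields
\begin{equation*}
   0 \to \Ext^1(\OO_V(-1),\OO_V(-2)) \to \Ext^1(\I_{P/V}(-1),\OO_V(-2))
   \to \Ext^2(k(P),\OO_V(-2)) \to 0,
\end{equation*}
and both outer terms are one-dimensional. The extra classes correspond to extensions that are not annihilated by the linear form $\ell$ cutting out $V$; such an extension is non-split (a split one would be an $\OO_V$-module) yet is not isomorphic to $\Omega^1_V$, so ``non-split $\Rightarrow$ $\iso\Omega^1_V$'' fails. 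This is not hypothetical in your setup: the lift $\iota$ is only unique up to the torsor action of $\Hom(\OO_{\PP^3}(-2),\OO_V(-2))=k$, and a direct check (say in the fiber-product model $\G=\Omega^1_V\times_{\I_{P/V}(-1)}\I_P(-1)$) shows that for all but one choice of lift the resulting $\Q$ satisfies $\ell\cdot\Q\neq 0$, hence $\Q\not\iso\Omega^1_V$. Your argument does nothing to single out the good lift.

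Two ways to close the gap. You could show that the obstruction class of $\Q$ in $\Hom(\I_{P/V}(-2),\OO_V(-2))$ varies non-constantly (affinely) with the lift, so that some lift makes $\Q$ an $\OO_V$-module, after which your one-dimensional $\Ext^1_V$ together with non-splitness does finish the identification. Alternatively, and this is what the paper does, reverse the logic: construct the bottom row and right column first, define $\G$ as the fiber product $\Omega^1_V\times_{\I_{P/V}(-1)}\I_P(-1)$ (which produces the middle row and middle column for free), verify that the middle row is non-split --- if it split, the middle column would give $H^1(\Omega^1_V(-1))\iso H^1(\I_P(-2))=k$, contradicting $H^1(\Omega^1_V(-1))=0$ from the Euler sequence --- and then conclude $\G\iso\G_{P,V}$ from the uniqueness of the non-split extension \eqref{eq:GpV}.
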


\begin{proof}
From the Euler sequence
\begin{equation}\label{eq:euler}
   0 \to \Omega^1_V \to \OO_V(-1)^{\oplus 3} \to \OO_V \to 0
\end{equation}
on $V\iso\PP^2$ it follows that $\Omega^1_V(2)$ has a unique section (up
to scale) vanishing at $P\in V$. This leads to the short exact sequence
in the bottom row. Moreover there is a canonical short exact sequence as
in the rightmost column. The rest of the diagram can then be formed by
taking $\G$ to be the fiber product as laid out by the bottom
right square. It just remains to verify that the middle row is indeed
nonsplit. But if it were split the middle column would be a short exact
sequence of the form
\begin{equation*}
   0 \to \OO_{\PP^3}(-2) \to \OO_V(-2) \oplus \I_P(-1)
   \to \Omega^1_V \to 0.
\end{equation*}
This sequence implies that $H^1(\Omega^1_V(-1))$ is isomorphic to
$H^1(\I_P(-2))$, which is one dimensional. But the Euler sequence
shows that in fact $H^1(\Omega^1_V(-1)) = 0$.
\end{proof}

\begin{pro}\label{pro:dimExtG}
We have $\dim \Ext^1(\G, \G) = 8$.
\end{pro}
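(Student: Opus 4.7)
The plan is to mirror the strategy of Proposition \ref{pro:dimExtF}: establish the upper bound $\dim \Ext^1(\G, \G) \le 8$ by chaining long exact sequences drawn from the diagram of Lemma \ref{lem:diagramG}, and defer the matching lower bound to the moduli space $\MIII$, whose Zariski tangent space at $\G$ is $\Ext^1(\G, \G)$ and which contains the $8$-dimensional open locus of pairs of disjoint lines $\mathcal S \setminus F$.

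First I would apply $\Hom(-, \G)$ to the middle column $0 \to \OO_{\PP^3}(-2) \to \G \to \Omega^1_V \to 0$ of Lemma \ref{lem:diagramG}. The resulting long exact sequence contains
$\Ext^1(\Omega^1_V, \G) \to \Ext^1(\G, \G) \to \Ext^1(\OO_{\PP^3}(-2), \G) = H^1(\G(2))$,
and the rightmost term vanishes: twisting the same column by $\OO_{\PP^3}(2)$ and using $H^1(\Omega^1_V(2)) = 0$ (immediate from the Euler sequence on $V \iso \PP^2$) shows $H^1(\G(2)) = 0$. Hence $\Ext^1(\G, \G)$ is a quotient of $\Ext^1(\Omega^1_V, \G)$, and it suffices to bound the latter by $8$.

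Next I would apply $\Hom(\Omega^1_V, -)$ to the middle row $0 \to \OO_V(-2) \to \G \to \I_P(-1) \to 0$, yielding
$\Ext^1(\Omega^1_V, \OO_V(-2)) \to \Ext^1(\Omega^1_V, \G) \to \Ext^1(\Omega^1_V, \I_P(-1))$.
The leftmost term vanishes: applying $\Hom(-, \OO_V(-2))$ to the Euler sequence on $V$ reduces it to $\Ext^*_{\PP^3}(\OO_V(d), \OO_V(-2))$ for $d \in \{0, -1\}$, and each of these vanishes because the Koszul resolution $0 \to \OO_{\PP^3}(-1) \to \OO_{\PP^3} \to \OO_V \to 0$ computes them in terms of $H^*(\OO_V(e))$ for $e \in \{-1, -2\}$, which are all zero on $\PP^2$.

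The main calculation is to show $\dim \Ext^1(\Omega^1_V, \I_P(-1)) \le 8$. Applying $\Hom(-, \I_P(-1))$ to the Euler sequence on $V$, routine computations via the same Koszul resolution for $\OO_V$ and the ideal sequence $0 \to \I_P \to \OO_{\PP^3} \to k(P) \to 0$ give $\Ext^1(\OO_V, \I_P(-1)) \iso H^1(\I_P(-1)) = k$, $\Ext^1(\OO_V(-1), \I_P(-1)) \iso \Ext^1(\OO_V, \I_P) \iso H^0(\I_P(1)) = k^3$, and $\Ext^2(\OO_V, \I_P(-1)) = 0$. The crucial input is $\Hom(\Omega^1_V, \I_P(-1)) = 0$, which holds because $\Omega^1_V$ is a torsion sheaf on $\PP^3$ whereas $\I_P(-1) \hookrightarrow \OO_{\PP^3}(-1)$ is torsion-free. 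The long exact sequence then collapses to $0 \to k \to k^9 \to \Ext^1(\Omega^1_V, \I_P(-1)) \to 0$, giving dimension exactly $8$, and combining with the previous steps yields $\dim \Ext^1(\G, \G) \le 8$. The main obstacle in the plan is the last step: it hinges on the torsion-freeness observation for $\Hom(\Omega^1_V, \I_P(-1))$ together with careful bookkeeping of the three nested long exact sequences, though each individual calculation is routine.
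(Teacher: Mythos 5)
Your overall architecture (bound $\Ext^1(\G,\G)$ from above via the diagram of Lemma \ref{lem:diagramG}, defer the lower bound to the tangent space of $\MIII$) is reasonable and the last move matches what the paper does for $\F$. However, there is a genuine error in your second step: $\Ext^1_{\PP^3}(\Omega^1_V, \OO_V(-2))$ does not vanish --- it is $3$-dimensional. The slip is in the list of twists: for $d=-1$ you need $\Ext^*_{\PP^3}(\OO_V(-1),\OO_V(-2))=\Ext^*_{\PP^3}(\OO_V,\OO_V(-1))$, and the Koszul resolution $0\to\OO_{\PP^3}(-1)\to\OO_{\PP^3}\to\OO_V\to 0$ expresses this in terms of $H^*(\OO_V(-1))$ and $H^*(\OO_V)$, not $H^*(\OO_V(-1))$ and $H^*(\OO_V(-2))$; since $H^0(\OO_V)=k$, one gets $\Ext^1_{\PP^3}(\OO_V(-1),\OO_V(-2))\iso k$. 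Feeding this into the Euler sequence gives
\begin{equation*}
0=\Ext^1(\OO_V,\OO_V(-2))\to\Ext^1(\OO_V(-1),\OO_V(-2))^{\oplus 3}\iso k^3\to\Ext^1(\Omega^1_V,\OO_V(-2))\to\Ext^2(\OO_V,\OO_V(-2))=0,
\end{equation*}
so $\Ext^1(\Omega^1_V,\OO_V(-2))\iso k^3$. With the correct value your chain of inequalities only yields $\dim\Ext^1(\Omega^1_V,\G)\le 3+8=11$ (and indeed the paper computes $\dim\Ext^1(\Omega^1_V,\G)=11$ exactly, by applying $\Hom(-,\G)$ to the Euler sequence), hence only $\dim\Ext^1(\G,\G)\le 11$.

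The reason you nonetheless land on $8$ is that your two steps contain compensating inaccuracies. In step 1 you keep only the surjectivity of $\Ext^1(\Omega^1_V,\G)\onto\Ext^1(\G,\G)$ and discard its kernel, which is exactly $3$-dimensional: the six-term sequence reads $0\to\Hom(\Omega^1_V,\G)\to\Hom(\G,\G)\to H^0(\G(2))\to\Ext^1(\Omega^1_V,\G)\to\Ext^1(\G,\G)\to 0$ with dimensions $0,1,4,\ast,\ast,0$, so the kernel is $k^4/k\iso k^3$. In step 2 you then subtract $3$ in the wrong place by zeroing out $\Ext^1(\Omega^1_V,\OO_V(-2))$. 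The repair is to retain the full six-term sequence, which gives $\dim\Ext^1(\G,\G)=\dim\Ext^1(\Omega^1_V,\G)-3$, and to combine your (correct) computation $\Ext^1(\Omega^1_V,\I_P(-1))\iso k^8$ with $\Ext^1(\Omega^1_V,\OO_V(-2))\iso k^3$ to get $\dim\Ext^1(\Omega^1_V,\G)\le 11$, whence $\dim\Ext^1(\G,\G)\le 8$. This corrected route is then a mild variant of the paper's proof, which evaluates $\Ext^1(\Omega^1_V,\G)=11$ via the Euler sequence rather than via the middle row.
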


\begin{proof}
We will be using the short exact sequence
\begin{equation}\label{eq:G-Omega}
   0 \to \OO_{\PP^3}(-2) \to \G \to \Omega^1_V \to 0
\end{equation}
which sits as the middle column in Lemma \ref{lem:diagramG}. As
preparation we observe that all (dimensions of) $H^i(\Omega^1_V(d))$ may
be computed from the Euler sequence, and this enables us to compute
several $H^i(\G(d))$ from \eqref{eq:G-Omega}. We use these results
freely below without writing out further details.

Apply $\Hom(-, \G)$ to \eqref{eq:G-Omega} to produce a long exact
sequence
\begin{align*}
   0 &\to \underbrace{\Hom(\Omega^1_V, \G)}_0
   \to \underbrace{\Hom(\G, \G)}_1
   \to \underbrace{H^0(\G(2))}_4 \\
   &\to \Ext^1(\Omega_V^1, \G)
   \to \Ext^1(\G, \G)
   \to \underbrace{H^1(\G(2))}_0
   \to \cdots
\end{align*}
in which we have indicated some of the dimensions: $H^i(\G(2))$ are
computed from \eqref{eq:G-Omega} as sketched above, and since $\G$ is
simple we have $\Hom(\G, \G) = k$. For the same reason
\eqref{eq:G-Omega} is nonsplit, which implies
$\Hom(\Omega^1_V, \G) = 0$. It thus remains to see that the dimension of
$\Ext^1(\Omega^1_V, \G)$ is $11$.

Next apply $\Hom(-, \G)$ to the Euler sequence. This gives a long
exact sequence
\begin{align*}
   \cdots &\to \underbrace{\Hom(\Omega^1_V, \G)}_0
   \to \underbrace{\Ext^1(\OO_V, \G)}_1
   \to \underbrace{\Ext^1(\OO_V, \G(1))^{\oplus 3}}_{4\cdot 3}\\
   &\to \Ext^1(\Omega^1_V, \G)
   \to \underbrace{\Ext^2(\OO_V, \G)}_0
   \to \cdots
\end{align*}
where again dimensions have been indicated: the vanishing of
$\Hom(\Omega^1_V, \G)$ has already been noted, and there remain
several spaces of the form $\Ext^i(\OO_V, \G(d))$. These may be
computed from $H^i(\G(d))$ and the long exact sequence resulting
from applying $\Hom(-, \G(d))$ to
\begin{equation*}
   0 \to \OO_{\PP^3}(-1) \to \OO_{\PP^3} \to \OO_V \to 0.
\end{equation*}
It follows that $\dim \Ext^1(\Omega^1_V, \G) = 11$ and we are done.
\end{proof}


\section{Moduli spaces and universal families}\label{sec:modulispace}

By the classification of stable objects in chamber II, the moduli space
$\mathcal M^{\mathrm{II}}$ is at least obtained as a set from
$\hilb^{2m+2}(\PP^3) = \mathcal C \cup \mathcal S$ by just replacing the
divisor $E\subset \mathcal C$, parametrizing conics union a point inside
a plane, with the incidence variety $I$, parametrizing just pairs
$(P,V)$ of a point $P$ inside a plane $V$. Similarly, the moduli space
$\mathcal M^{\mathrm{III}}$ of stable objects in chamber III is obtained
from $\mathcal{M}^{\mathrm{II}}$ set-theoretically by removing
$\mathcal{M}^{\mathrm{II}}\setminus \mathcal{S}$ and replacing the
divisor $F \subset \mathcal S$, parametrizing pairs of incident lines
with a spatial embedded point at the intersection, with the incidence
variety $I$. We shall carry out each of these replacements as a
contraction, i.e.\ a blow-down, and prove that this indeed yields
$\mathcal M^{\mathrm{II}}$ and $\mathcal M^{\mathrm{III}}$, essentially
by writing down a universal family for each case.

\subsection{The contraction $\mathcal C \to \mathcal C'$}\label{sec:modulispace:contraction}

Recall that $\mathcal{C}$ is isomorphic to the blow-up of
$\PP^3\times\hilb^{2m+1}(\mathbb{P}^3)$ along the universal curve
$\mathcal{Z}$, where $\hilb^{2m+1}(\mathbb{P}^3)$ is the Hilbert scheme
of plane conics in $\PP ^3$ \cite{Lee2000}. The exceptional divisor $E'$
is comprised of plane conics with an embedded point. 

It is helpful to keep an eye at the following diagram
\begin{equation}\label{eq:Ccontr}
   \begin{tikzcd}
   \mathcal C \ar[r,"b"]
   & \PP^3\times\hilb^{2m+1}(\PP^3) \ar[d,"\pr_1"] \ar[r,"\pr_2"]
   & \hilb^{2m+1}(\PP^3) \ar[d,"\pi"]\\
   & \PP^3 & \check \PP^3
   \end{tikzcd}
\end{equation}
where $\pi$ sends a conic $C\in \hilb^{2m+1}(\PP^3)$ to the plane $V\in
\check\PP^3$ it spans and $b$ is the blowup along the universal family
$\mathcal{Z}$ of conics.

\begin{rem}\label{rem:localcontr}
It will sometimes be useful to resort to explicit computation in local
coordinates. For this let $U\subset \check\PP^3$ be the affine open
subset of planes $V\subset \PP^3$ with equation of the form
\begin{equation}\label{eq:Clocal1}
   x_3 = c_0x_0 + c_1x_1 + c_2x_2.
\end{equation}
Furthermore the $\PP^5$ of symmetric $3\times 3$ matrices $(s_{ij})$
parametrizes plane conics
\begin{equation}\label{eq:Clocal2}
   \sum_{0 \le i,j \le 2} s_{ij} x_ix_j = 0
\end{equation}
so that $\res{\hilb^{2m+1}(\PP^3)}{U} \iso \PP^5\times U$ with universal
family defined by the two equations \eqref{eq:Clocal1} and
\eqref{eq:Clocal2}. This is also the center for the blowup $b$, and we
note that it is nonsingular.
\end{rem}

\begin{lem}\label{lem:P5bundle}
$\pi$ is a Zariski locally trivial $\PP^5$-bundle. More precisely, let
$I\subset\PP^3\times\check\PP^3$ be the incidence variety and let
\begin{equation*}
   \E = \pr_{2*}(\res{\pr_1^*\OO_{\PP^3}(2)}{I}).
\end{equation*}
Then $\E$ is locally free of rank $6$ and $\hilb^{2m+1}(\PP^3)\iso
\PP(\E^\vee)$ over $\check\PP^3$.
\end{lem}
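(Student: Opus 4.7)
The plan is two-fold: first establish that $\E$ is locally free of rank $6$, and then identify $\hilb^{2m+1}(\PP^3)$ with $\PP(\E^\vee)$ by matching their functors of points over $\check\PP^3$. Zariski local triviality of $\pi$ will then be automatic because the projectivisation of a locally trivial vector bundle is Zariski locally a product.

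For local freeness I would argue by cohomology and base change. The second projection $\pr_2\colon I\to \check\PP^3$ is the universal plane; it is a Zariski locally trivial $\PP^2$-bundle, hence flat and proper, so the restriction $\pr_1^*\OO_{\PP^3}(2)|_I$ is flat over $\check\PP^3$. The fibre over a plane $V$ is $\OO_V(2)$, whose cohomology has constant dimension $h^0(V,\OO_V(2))=6$ and vanishing $h^i$ for $i\ge 1$. Grauert (or the semicontinuity theorem together with the vanishing of the higher direct images) therefore gives that $\E=\pr_{2*}(\pr_1^*\OO_{\PP^3}(2)|_I)$ is locally free of rank $6$ and that its formation commutes with arbitrary base change $T\to \check\PP^3$.

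To identify $\hilb^{2m+1}(\PP^3)$ with $\PP(\E^\vee)$, I would use the universal property of the latter: a $T$-point of $\PP(\E^\vee)$ is a pair $(f\colon T\to \check\PP^3,\ \mathcal L\hookrightarrow f^*\E)$ where $\mathcal L$ is a line subbundle. On the other side, since the Hilbert polynomial $2m+1$ forces each fibre to span a unique plane, a $T$-point of $\hilb^{2m+1}(\PP^3)$ amounts to a morphism $f\colon T\to \check\PP^3$ together with a flat family $\mathcal Z\subset T\times_{\check\PP^3}I$ of degree-two divisors in the fibres of the pulled-back universal plane. Such $\mathcal Z$ is cut out on $T\times_{\check\PP^3}I$ by a section of $\pr_1^*\OO_{\PP^3}(2)|_I$ twisted by a line bundle pulled back from $T$, and pushing forward along the flat map to $T$ produces a line subbundle of $f^*\E$. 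Conversely, a line subbundle $\mathcal L\hookrightarrow f^*\E$ corresponds by adjunction to such a twisted section, whose vanishing locus is $T$-flat of the correct Hilbert polynomial, hence defines a $T$-point of the Hilbert scheme. These constructions are mutually inverse, giving the desired isomorphism over $\check\PP^3$.

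The step I expect to be the main obstacle is verifying that the correspondence is really an isomorphism of functors (and not just a bijection on points): in particular one must check that the $\mathcal Z$ produced from a line subbundle is flat over $T$ with Hilbert polynomial $2m+1$, and that no extra twisting conventions sneak in. Here the base-change statement from cohomology and base change is essential, and the explicit affine charts of Remark \ref{rem:localcontr} provide a convenient sanity check: on the open set $U\subset\check\PP^3$ considered there, $\E|_U$ is manifestly the trivial bundle of quadratic forms in $x_0,x_1,x_2$, and $\hilb^{2m+1}(\PP^3)|_U\iso \PP^5\times U = \PP(\E^\vee|_U)$ as asserted.
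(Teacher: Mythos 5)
Your proposal is correct and follows essentially the same route as the paper, which only sketches the argument: it likewise identifies the fibre of $\E$ over $V$ with $H^0(V,\OO_V(2))$, invokes the vanishing of higher cohomology so that base change applies, and declares the identification $\hilb^{2m+1}(\PP^3)\iso\PP(\E^\vee)$ straightforward. You have simply written out the functor-of-points details that the authors refrain from giving, and your conventions (lines in the fibres of $\E$, i.e.\ line subbundles) match theirs.
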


Here $\PP(\E^\vee)$ denotes the projective bundle parametrizing
lines in the fibers of $\E$. Starting with the observation that
the fiber of $\E$ over $V\in\check\PP^3$ is $H^0(V, \OO_V(2))$
(note that $H^1(V, \OO_V(2))=0$, so base change in cohomology applies)
the Lemma is straight forward and we refrain from writing out details.

Now let $E\subset \mathcal C$ be the locus of planar $Y\in \mathcal C$.
The condition on a disjoint union $Y = C \cup \{P\}$ to be in $E$ is
just that $P$ is in the plane $V$ spanned by $C$. For a conic with an
embedded point the condition $Y\subset V$ also singles out the scheme
structure at the embedded point. View $E$ as a variety over the
incidence variety $I\subset\PP^3\times\check\PP^3$ via the morphism
$(\id_{\PP^3}\times\pi)\circ b$ (refer to Figure \ref{conics} for simple
illustrations of the types of elements in $E$ and $E'$. In fact, we find
it helpful for keeping track of the various divisors introduced in this
section.)

\begin{pro}\label{prop:negative-one}
$E$ is a Zariski locally trivial $\PP^5$-bundle over the incidence
variety $I\subset \PP^3\times\check\PP^3$. The restriction
$\res{\OO_{\mathcal C}(E)}{\PP^5}$ to a fiber is isomorphic to
$\OO_{\PP^5}(-1)$.
\end{pro}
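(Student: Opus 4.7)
The plan is to exhibit $E$ as the strict transform under $b$ of the smooth hypersurface $I':=(\id_{\PP^3}\times\pi)^{-1}(I)\subset X:=\PP^3\times\hilb^{2m+1}(\PP^3)$, whose points are pairs $(P,C)$ with $P\in V_C$. Since $I\subset\PP^3\times\check\PP^3$ is a smooth bidegree-$(1,1)$ hypersurface and $\id\times\pi$ is smooth (by Lemma~\ref{lem:P5bundle}), $I'$ is smooth of codimension one in $X$. The center $\mathcal Z$ of the blowup sits inside $I'$ and has codimension one there, so it is a smooth Cartier divisor in $I'$. Blowing up a smooth variety along a Cartier divisor is an isomorphism, so the strict transform of $I'$ is itself isomorphic to $I'$. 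Granting the identification of $E$ with this strict transform, we obtain $E\iso I'=I\times_{\check\PP^3}\hilb^{2m+1}(\PP^3)$, and the Zariski locally trivial $\PP^5$-bundle structure of $E\to I$ follows by pulling back the structure from Lemma~\ref{lem:P5bundle}.

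For the line bundle, smoothness of $I'$ along $\mathcal Z$ forces the multiplicity of $I'$ along the center to be one, so
\begin{equation*}
   \OO_{\mathcal C}(E)\iso b^*\OO_X(I')\otimes\OO_{\mathcal C}(-E_b),
\end{equation*}
where $E_b$ is the exceptional divisor of $b$. I would restrict this identity to a fiber $F\iso\PP^5$ of $E\to I$ over a point $(P_0,V_0)$. Under $E\iso I'$ the fiber corresponds to $\{P_0\}\times\pi^{-1}(V_0)$. Since $I\subset\PP^3\times\check\PP^3$ has bidegree $(1,1)$, both tensor factors of $\OO_X(I')=\pr_1^*\OO_{\PP^3}(1)\otimes\pr_2^*\pi^*\OO_{\check\PP^3}(1)$ restrict trivially to $F$. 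Meanwhile $F\cap E_b$ corresponds to the conics in $V_0$ through $P_0$, cut out by the linear condition $s(P_0)=0$ inside $\PP(H^0(V_0,\OO_{V_0}(2)))$, hence a hyperplane $\PP^4\subset F$. Thus $\OO_{\mathcal C}(-E_b)|_F\iso\OO_{\PP^5}(-1)$, and combining with the trivial first factor we conclude $\OO_{\mathcal C}(E)|_F\iso\OO_{\PP^5}(-1)$.

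The main obstacle I anticipate is the set-theoretic identification of $E$ with the strict transform of $I'$, particularly over $\mathcal Z$. Away from $\mathcal Z$ the blowup $b$ is an isomorphism and the planar condition on $Y=C\sqcup\{P\}$ is exactly $P\in V_C$, so the two subvarieties agree there. Over $\mathcal Z$, the exceptional $\PP^1$ parametrizes normal directions to $C$ at $P$, which correspond to embedded-point structures by Example~\ref{ex:embedded}; the strict transform picks out the single direction lying in $N_{\mathcal Z/I'}$. The subtlety is to verify, by inspection of the local equations $\{P\in V_C\}$ and $\{s_C(P)=0\}$ cutting out $\mathcal Z\subset I'\subset X$, that this distinguished direction is the one tangent to $V_C$, i.e.\ precisely the planar embedded point structure. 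Once this local matching is settled, the rest of the argument is formal.
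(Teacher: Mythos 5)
Your proposal is correct and follows essentially the same route as the paper: identify $E$ with the strict transform of $\overline E=(\id_{\PP^3}\times\pi)^{-1}(I)$, note that blowing up the smooth $\overline E$ along the Cartier divisor $\mathcal Z$ is an isomorphism (giving the $\PP^5$-bundle structure by pullback from Lemma~\ref{lem:P5bundle}), and then restrict $E=b^*\overline E-E'$ to a fiber, where the pullback term is trivial and the exceptional divisor cuts out the hyperplane of conics through the given point. The subtlety you flag about the strict transform selecting the planar embedded-point structure over $\mathcal Z$ is handled in the paper only implicitly (in the discussion preceding the proposition), and your sketch of how to resolve it via the local equations matches the paper's local-coordinate computation.
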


\begin{figure}[!htb]\centering
\includegraphics[width=0.6\textwidth]{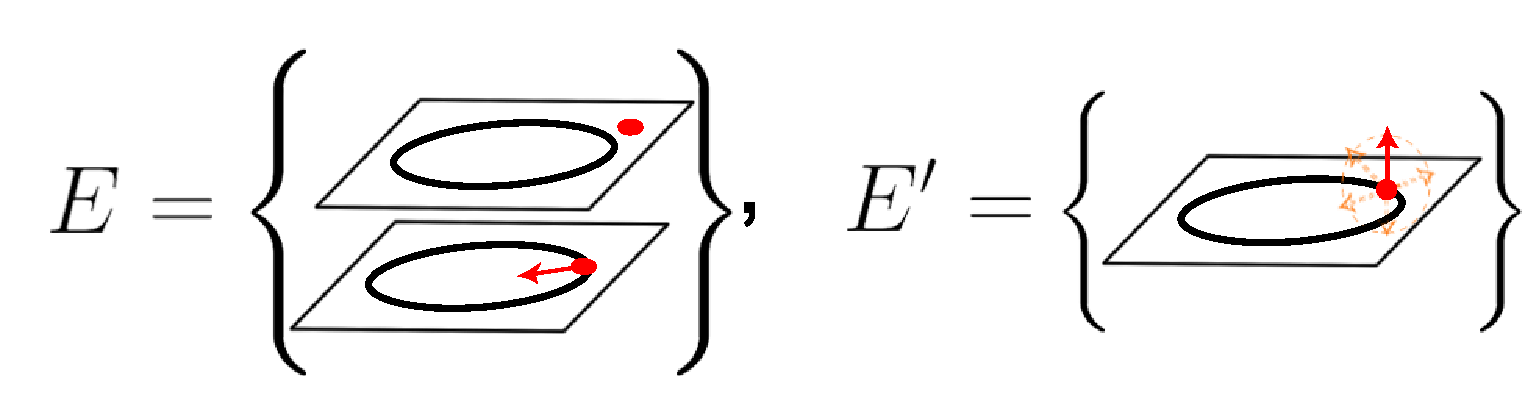}
\caption{A Circle represents a conic contained in a plane that is shown
as a parallelogram, and a red dot is a point, possibly embedded in the
conic. The arrow is the direction vector at an embedded point. Note that
in the left illustration, the arrow is strictly contained in the plane.} 
\label{conics}
\end{figure}

Before giving the proof, we harvest our application:

\begin{cor}\label{cor:Artincontraction}
There exist a smooth algebraic space $\mathcal C'$, a morphism
$\phi\colon \mathcal C \to \mathcal C'$ and a closed embedding $I\subset
\mathcal C'$, such that $\phi$ restricts to an isomorphism from
$\mathcal{C}\setminus E$ to $\mathcal{C'}\setminus I$ and to the given
projective bundle structure $E \to I$. Moreover $\phi$ is the blowup of
$\mathcal C'$ along $I$.
\end{cor}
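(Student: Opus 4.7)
The plan is to apply Artin's contractibility criterion for algebraic spaces, with Proposition \ref{prop:negative-one} supplying the necessary numerical input. Set-theoretically $\mathcal C'$ should be the pushout of $\mathcal C \hookleftarrow E \twoheadrightarrow I$: collapse each $\PP^5$-fiber of $E\to I$ to its image in $I$ and leave $\mathcal C\setminus E$ untouched. The task is to realize this pushout as a smooth algebraic space and identify the resulting map with a blowup.

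First I would verify the hypotheses of Artin's contractibility theorem. The map $E\to I$ is a smooth proper surjection with connected, equidimensional fibers, and the critical numerical input is that the conormal sheaf $\OO_{\mathcal C}(-E)|_E$ is relatively ample over $I$, which is immediate from $\OO_{\mathcal C}(E)|_{\PP^5}\cong \OO_{\PP^5}(-1)$. This is precisely the situation in which Artin's theorem produces a proper birational morphism $\phi\colon \mathcal C \to \mathcal C'$ to an algebraic space, contracting $E$ onto $I$ along the projective bundle structure and restricting to an isomorphism away from $E$.

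For smoothness of $\mathcal C'$ and the identification of $\phi$ as a blowup, I would work \'etale-locally around a point $i\in I$. Using the Zariski-local triviality of $E\to I$ and shrinking if necessary, we may trivialize both the $\PP^5$-bundle and the line bundle $\OO_{\mathcal C}(-E)|_E$, which on each fiber is $\OO_{\PP^5}(1)$. This reproduces the standard local model of the exceptional divisor of the blowup of a smooth ambient variety along a smooth codimension-$6$ subvariety. Combined with the known smoothness of $\mathcal C$, this forces $\mathcal C'$ to be \'etale-locally smooth of the expected dimension, and it identifies $\phi$ with the local blowup of $\mathcal C'$ along $I$. The closed embedding $I\subset\mathcal C'$ and the universal property of the blowup then yield the global blowup description.

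The main obstacle is the glueing step: verifying that the local blowup models descend to a separated algebraic space and that Artin's representability conditions are met at the points of $I$. Once this is in hand, all remaining claims, namely the restriction of $\phi$ to an isomorphism on $\mathcal C\setminus E$, the restriction to $E\to I$ on the exceptional locus, and the identification of $\phi$ with the blowup along $I$, follow formally from the uniqueness of the pushout and the functorial nature of the blowup construction.
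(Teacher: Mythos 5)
Your proposal follows the same main line as the paper: Proposition \ref{prop:negative-one} supplies the hypothesis ($E\to I$ a $\PP^5$-bundle with $\OO_{\mathcal C}(E)$ restricting to $\OO_{\PP^5}(-1)$ on fibres), and Artin's contractibility criterion for algebraic spaces produces the contraction $\phi\colon\mathcal C\to\mathcal C'$, which is an isomorphism off $E$ and restricts to the bundle map on $E$. Where you diverge is in establishing smoothness of $\mathcal C'$ and the blowup identification: the paper obtains the latter from Lascu's theorem (a contraction with smooth source, smooth target and smooth image of the contracted locus is automatically a blowup), whereas you propose a direct \'etale-local comparison with the standard blowup model. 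That route is viable, but the step ``trivialize the $\PP^5$-bundle and the line bundle $\OO_{\mathcal C}(-E)|_E$, hence recover the standard local model'' is not formal: those data only determine the \emph{first} infinitesimal neighbourhood of $E$ in $\mathcal C$. To identify an actual \'etale or formal neighbourhood with that of the exceptional divisor of a blowup --- equivalently, to see via the theorem on formal functions that the completion of $\OO_{\mathcal C'}$ along $I$ is $\prod_n\sym^n$ of a rank-$6$ locally free sheaf on $I$, whence smoothness --- one must control all higher infinitesimal neighbourhoods, using that $\phi_*(\I_E^n/\I_E^{n+1})$ is locally free and the relevant $R^1\phi_*$ vanish because the fibrewise restrictions are $\OO_{\PP^5}(n)$. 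That vanishing is the real content of the Moishezon/Fujiki--Nakano/Artin results, so the honest options are to carry out this computation explicitly or to cite it as the paper does; by contrast, the gluing and separatedness issues you single out as the main obstacle are comparatively mild, being handled once and for all by Artin's theorem.
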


It is well known that the condition verified in Proposition
\ref{prop:negative-one} implies the contractibility of $E/I$ in the
above sense. In the category of analytic spaces this is the Moishezon
\cite{Moi1967} or Fujiki--Nakano \cite{Nak1970, FN1971} criterion. In
the category of algebraic spaces the contractibility is due to Artin
\cite[Corollary 6.11]{Art1970}, although the statement there lacks the
identification with a blowup. Lascu \cite[Théorème 1]{Las1969} however
shows that once the contracted space $\mathcal C'$ as well as the image
$I \subset \mathcal C'$ of the contracted locus are both smooth, it does
follow that the contracting morphism is a blowup. Strictly speaking
Lascu works in the category of varieties, but our $\mathcal C'$ turns
out to be a variety anyway:

\begin{rem}
The algebraic space $\mathcal C'$ is in fact a projective variety. We
prove this in Section \ref{sec:mori} using Mori theory. As the arguments
there and in the present section are largely independent we separate the
statements.
\end{rem}

We also point out that the smooth contracted space $\mathcal C'$ is
unique once it exists: in general, suppose $\phi\colon X\to U$ and
$\psi\colon X\to V$ are proper birational morphisms between irreducible
separated algebraic spaces (say, of finite type over $k$) with $U$ and
$V$ normal. Moreover assume that $\phi(x_1)=\phi(x_2)$ if and only if
$\psi(x_1)=\psi(x_2)$. Let $\Gamma$ denote the image of
$(\phi,\psi)\colon X \to U\times V$. Then each of the projections from
$\Gamma$ to $U$ and $V$ is birational and bijective and hence an
isomorphism by Zariski's Main Theorem (for this in the language of
algebraic spaces we refer to the Stacks Project
\cite[\href{https://stacks.math.columbia.edu/tag/05W7}{Tag
05W7}]{stacks-project}).

\begin{proof}[Proof of Proposition \ref{prop:negative-one}.]
Consider the divisor
\begin{equation*}
   \overline{E}=\{(P,C) \in \PP^3 \times \hilb^{2m+1}(\mathbb{P}^3) \st
   \text{$P$ in the plane spanned by $C$} \}.
\end{equation*} 
It follows from Lemma \ref{lem:P5bundle} that
\begin{equation*}
   \PP^3 \times \hilb^{2m+1}(\mathbb{P}^3)
   \xrightarrow{id_{\PP^3}\times \pi} \PP^3 \times \check\PP^3
\end{equation*}
is a $\PP^5$-bundle, hence its restriction to $\big(id_{\PP^3}\times
\pi\big)^{-1}(I)=\overline{E}$ is a $\PP ^5$-bundle over $I \subset
\mathbb{P}^3 \times \check\PP^3$. 

Now, $E$ is the strict transform of $\overline{E}$, i.e.\ its blow-up
along $\mathcal Z \subset \overline{E}$. But this is a Cartier divisor,
since $\overline{E}$ is smooth, and so $E\iso\overline E$. This proves
the first claim.

Again using that $\overline E$ is smooth, its strict transform $E$
satisfies the linear equivalence
\begin{equation}\label{eq:total-strict}
   E = b^*(\overline E) - E'.
\end{equation}
The term  $b^*(\overline E) = b^*(\id_{\PP^3}\times\pi)^*(I)$ is a
pullback from the base of the $\PP^5$-bundle, so its restriction to any
fiber is trivial. Thus it suffices to see that $E'$ restricted to a
fiber $\PP^5$ is a hyperplane. Now the isomorphism $b\colon E
\iso\overline E$ identifies $E\cap E'\subset E$ with $\mathcal Z\subset
E$. In the local coordinates from Remark \ref{rem:localcontr} the
divisor $E$ is given by equation \eqref{eq:Clocal1} and $\mathcal Z$ is
given by the additional equation \eqref{eq:Clocal2}. Here $(s_{ij})$ are
the coordinates on the fiber $\PP^5$ and clearly \eqref{eq:Clocal2}
defines a hyperplane in each fiber --- it is the linear condition on the
space of plane conics given by passage through a given point.
\end{proof}

\begin{rem}
The locus $\mathcal C\cap \mathcal S$ consists of pairs of intersecting
lines with a \emph{spatial} embedded point at the intersection (and, as
degenerate cases, planar double lines with a spatial embedded point). On
the other hand $E$ consists only of planar objects, so $E$ is disjoint
from $\mathcal S$. Thus we may extend the contraction $\phi$ to a
morphism between algebraic spaces
\begin{equation*}
   (\phi\cup \id)\colon \mathcal C\cup \mathcal S \to \mathcal C'
   \cup \mathcal S
\end{equation*}
which is an isomorphism away from $E$ and restricts to the
$\PP^5$-bundle $E\to I$ as before.
\end{rem}

\subsection{Moduli in chamber II}\label{sec:familychamberII}

In this section we shall modify the universal family on the Hilbert
scheme $\hilb^{2m+2}(\PP^3) = \mathcal C \cup \mathcal S$ in such a way
that we replace its fibers over $E\subset \mathcal C$ with the objects
$\F_{P,V}$ in Theorem \ref{thm:main1}. This family induces a morphism
\begin{equation*}
   \mathcal C \cup \mathcal S \to \MII
\end{equation*}
and we conclude via uniqueness of normal (in this case smooth)
contractions that $\MII$ coincides with $\mathcal C'\cup \mathcal S$.

Here is the construction: let
\begin{equation*}
   \mathcal Y \subset \PP^3 \times \hilb^{2m+2}(\PP^3)
\end{equation*}
be the universal family and let
\begin{equation*}
   \mathcal V \subset \PP^3 \times E
\end{equation*}
be the $E$-flat family whose fiber $\mathcal V_\xi \subset \PP^3$ over a
point $\xi$ mapping to $(P,V)\in I$ is the plane $V$. Clearly $\mathcal
V$ can be written down as a pullback of the universal plane over
$\check\PP^3$. We view $\mathcal V$ as a closed subscheme of
$\PP^3\times\hilb^{2m+2}(\PP^3)$. Then our modified universal family is
the ideal sheaf $\I_{\mathcal Y\cup \mathcal V}$.

\begin{rem}
We emphasize the (to us, at least) unusual situation that $\I_{\mathcal
Y\cup \mathcal V}$ is the ideal of a very nonflat subscheme,
yet as we show below it is flat as a coherent sheaf. Its fibers over
points in $E$ are not ideals at all, but rather the objects $\F_{P,V}$.
\end{rem}

\begin{thm}\label{thm:modulichamberII}
As above let $\mathcal Y$ be the universal family over
$\hilb^{2m+2}(\PP^3)$ and $\mathcal V$ the family of planes in $\PP^3$
parametrized by $E$.
\begin{itemize}
   \item[(i)]
   $\I_{\mathcal Y \cup \mathcal V}$ is flat as a coherent sheaf over
   $\hilb^{2m+2}(\PP^3)$. Its fibers $\I_{\mathcal Y\cup\mathcal V}
   \tensor k(\xi)$ over $\xi\in\hilb^{2m+2}(\PP^3)$ are stable objects
   for stability conditions in chamber II.
   \item[(ii)]
   The morphism $\hilb^{2m+2}(\PP^3) \to \MII$ determined by
   $\I_{\mathcal Y\cup \mathcal V}$ induces an isomorphism $\mathcal C'
   \cup \mathcal S \iso \MII$.
\end{itemize}
\end{thm}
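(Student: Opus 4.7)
The overall plan is to establish (i) by combining a trivial observation away from $E$ with a Tor computation over $E$, and then to derive (ii) from (i) by comparing bijective morphisms of smooth algebraic spaces.

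For (i), away from $E$ the family $\mathcal V$ has no support, so $\I_{\mathcal Y \cup \mathcal V}$ locally coincides with $\I_\mathcal{Y}$; this is $T$-flat with fiber $\I_Y$, and stability in chamber II follows from Theorem~\ref{thm:main1}(II)(i) as $Y$ is nonplanar. Over $E$ I would analyze the short exact sequence
\begin{equation*}
   0 \to \I_{\mathcal Y \cup \mathcal V}
   \to \I_\mathcal{Y}
   \to \I_{\mathcal R/\mathcal V}
   \to 0
\end{equation*}
on $\PP^3 \times \hilb^{2m+2}(\PP^3)$, where $\mathcal R = \mathcal Y \cap \mathcal V$ and $\I_{\mathcal R/\mathcal V}$ is the relative ideal of $\mathcal R \subset \mathcal V$. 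Since $Y \subset V$ for each $\xi \in E$, one has $\mathcal Y|_{\PP^3 \times E} \subset \mathcal V$ as subschemes, so $\mathcal R|_E = \mathcal Y|_{\PP^3 \times E}$; consequently $\I_{\mathcal R/\mathcal V}$ is $E$-flat with fiber $\I_{Y/V} = \I_{P/V}(-2)$. Because $E \subset \hilb^{2m+2}(\PP^3)$ is a smooth Cartier divisor by Proposition~\ref{prop:negative-one}, the Koszul resolution of $\OO_E$ yields $\TTor_1^{\OO_T}(\I_{\mathcal R/\mathcal V}, k(\xi)) = \I_{P/V}(-2)$ for $\xi \in E$, with higher $\TTor$ vanishing. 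The Tor long exact sequence of the displayed SES then forces $\TTor_1^{\OO_T}(\I_{\mathcal Y \cup \mathcal V}, k(\xi)) = 0$, giving flatness, and presents the fiber as an extension
\begin{equation*}
   0 \to \I_{P/V}(-2)
   \to \I_{\mathcal Y \cup \mathcal V} \otimes k(\xi)
   \to \OO_{\PP^3}(-1)
   \to 0,
\end{equation*}
matching \eqref{eq:FpV}. To exclude the split alternative, which would be strictly semistable in chamber II, I would compute $\I_\mathcal{Y} \cap \I_\mathcal{V}$ in a single affine chart of $\mathcal C$ near $E$ using Remark~\ref{rem:localcontr}.

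For (ii), the flat family $\I_{\mathcal Y \cup \mathcal V}$ of chamber-II stable objects induces a morphism $\hilb^{2m+2}(\PP^3) \to \MII$. By the uniqueness in Theorem~\ref{thm:main1}(II)(ii), $\F_{P,V}$ depends only on $(P, V) \in I$, so this morphism is constant on the $\PP^5$-fibers of $E \to I$ and factors through the contraction $\hilb^{2m+2}(\PP^3) \twoheadrightarrow \mathcal C' \cup \mathcal S$ of Section~\ref{sec:modulispace:contraction}. The resulting morphism $\mathcal C' \cup \mathcal S \to \MII$ is bijective on closed points by Theorem~\ref{thm:main1}(II). Both sides are smooth and the tangent spaces $\Ext^1(\E, \E)$ of $\MII$ have dimension $11$ at $[\F_{P,V}]$ by Proposition~\ref{pro:dimExtF} and match the corresponding Hilbert-scheme tangent dimensions ($11$ on $\mathcal C \setminus E$, $8$ on $\mathcal S$) at ideal-sheaf points; a bijective morphism between smooth algebraic spaces of matching tangent dimensions is an isomorphism.

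The main obstacle is the non-splitness verification in (i). The Tor computation is formal once $\I_{\mathcal R/\mathcal V}$ is known to be $E$-flat, but distinguishing the stable $\F_{P,V}$ from the strictly semistable split extension at every $\xi \in E$ requires genuine local work: one either computes $\I_\mathcal{Y} \cap \I_\mathcal{V}$ explicitly in coordinates and tracks the extension class, or argues via openness of stability combined with a non-vanishing check at a convenient reference point.
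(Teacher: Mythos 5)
Your route to flatness and to the short exact sequence
\begin{equation*}
   0\to\I_{P/V}(-2)\to\I_{\mathcal Y\cup\mathcal V}\tensor k(\xi)\to\OO_{\PP^3}(-1)\to 0
\end{equation*}
is sound and is essentially a repackaging of the paper's Lemma \ref{lem:opposite-ses}: you tensor $0\to\I_{\mathcal Y\cup\mathcal V}\to\I_{\mathcal Y}\to\I_{(\mathcal Y\cap\mathcal V)/\mathcal V}\to 0$ with $k(\xi)$, the paper tensors the structure sequence of $\mathcal Y\cup\mathcal V$ with $\OO_{X\times E}$; both hinge on the same $\TTor_1$ computation and on the scheme-theoretic inclusion $\mathcal Y\cap(\PP^3\times E)\subset\mathcal V$. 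Part (ii) also matches the paper's argument (factor through the contraction, invoke Proposition \ref{pro:dimExtF} for smoothness of $\MII$ along the image of $E$, conclude by uniqueness of normal contractions / Zariski's Main Theorem); just note that $\mathcal C'\cup\mathcal S$ is not smooth along $\mathcal C\cap\mathcal S$, so the conclusion should be phrased component by component.

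The genuine gap is exactly where you flag ``the main obstacle'': proving the extension is nonsplit at \emph{every} $\xi\in E$, and neither of your proposed remedies closes it. Upper semicontinuity of $\hom(\F_\xi,\F_\xi)$ makes the nonsplit locus \emph{open} in $E$ and the split locus \emph{closed}, so a check at one convenient reference point only yields nonsplitness on a dense open subset; and since a fiber $\PP^5$ of $E\to I$ breaks into several orbits under the stabilizer of $(P,V)$ --- conic disjoint from $P$, embedded point at a smooth point of $C$, at a node, on a double line --- no single reference point (even with equivariance) covers $E$. The alternative, computing $\I_{\mathcal Y}\cap\I_{\mathcal V}$ ``in a single affine chart,'' is not carried out and would still have to treat these strata separately. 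The paper's actual mechanism, which constitutes the bulk of its proof of (i), is to rule out the split extension by producing a surjection $\I_{\mathcal Y\cup\mathcal V}\tensor k(\xi)\onto\OO_V(-2)$, which $\OO_{\PP^3}(-1)\oplus\I_{P/V}(-2)$ cannot admit (Lemma \ref{lem:nosurjectionO(-2)}); the surjection comes from a first-order deformation $T=\Spec k[t]/(t^2)$ transverse to $E$ via the colon-ideal construction of Lemma \ref{lem:quotientexists1}, and one must exhibit such a $T$ and verify $(\I:t)/(t)=\I_{C/V}$ in local coordinates in each of the four strata. Without this (or an equivalent argument valid on all of $E$), part (i), and hence part (ii), remains unproved.
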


We begin by showing that the fibers $\I_{\mathcal Y\cup \mathcal V}
\tensor k(\xi)$ over $\xi\in E$ sit in a short exact sequence of the
type \eqref{eq:FpV}. The mechanism producing such a short exact sequence
is quite general. Note that when $Y = \mathcal Y_\xi$ is a conic with a
(possibly embedded) point $P$ in a plane $V = \mathcal V_\xi$, we have
$\I_{Y/V} \iso \I_{P/V}(-2)$ and $\I_V \iso \OO_{\PP^3}(-1)$.

\begin{lem}\label{lem:opposite-ses}
Let $X$ be a projective scheme, $\mathcal Y\subset X\times S$ an
$S$-flat subscheme, $E\subset S$ a Cartier divisor and $\mathcal
V\subset E\times S$ an $E$-flat subscheme such that $\mathcal Y \cap
(E\times S) \subset \mathcal V$. Let $\xi\in E$. Then there is a short
exact sequence
\begin{equation*}
   0 \to \I_{\mathcal Y_\xi/\mathcal V_\xi}
   \to \I_{\mathcal Y\cup\mathcal V} \tensor k(\xi)
   \to \I_{\mathcal V_\xi} \to 0.
\end{equation*}
In particular if $S$ is integral in a neighborhood of $E$ then
$\I_{\mathcal Y\cup\mathcal V}$ is flat over $S$.
\end{lem}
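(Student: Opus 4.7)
The plan is to obtain the short exact sequence by applying the long exact $\mathrm{Tor}$ sequence to the following identity of ideal sheaves on $X \times S$:
\begin{equation*}
   0 \to \I_{\mathcal Y \cup \mathcal V} \to \I_{\mathcal Y} \to \J \to 0,
   \qquad \J := \I_{(\mathcal Y \cap \mathcal V)/\mathcal V}.
\end{equation*}
The identity follows from $\I_{\mathcal Y \cup \mathcal V} = \I_{\mathcal Y}\cap\I_{\mathcal V}$ together with $\I_{\mathcal Y}/(\I_{\mathcal Y}\cap\I_{\mathcal V}) \cong (\I_{\mathcal Y}+\I_{\mathcal V})/\I_{\mathcal V}$. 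The two hypothesized inclusions $\mathcal V \subset X\times E$ and $\mathcal Y \cap (X\times E)\subset \mathcal V$ together force $\mathcal Y \cap \mathcal V = \mathcal Y\cap(X\times E) = \mathcal Y\times_S E$, so $\J$ is the ideal inside $\mathcal V$ of a flat family over $E$. In particular $\J$ is $\OO_E$-flat, since both $\OO_{\mathcal V}$ and $\OO_{\mathcal Y\cap\mathcal V}$ are.

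Next, apply $-\tensor_{\OO_S} k(\xi)$. The ideal $\I_{\mathcal Y}$ is $\OO_S$-flat because $\OO_{\mathcal Y}$ is, so $\mathrm{Tor}_1^{\OO_S}(\I_{\mathcal Y},k(\xi))=0$ and $\I_{\mathcal Y}\tensor k(\xi) = \I_{\mathcal Y_\xi}$. For $\J$, I would work locally on $S$ with the two-term $\OO_S$-free resolution $0\to\OO_S\xrightarrow{t}\OO_S\to\OO_E\to 0$, where $t$ is a local equation for $E$; this yields $\mathrm{Tor}_0^{\OO_S}(\OO_E,k(\xi)) = \mathrm{Tor}_1^{\OO_S}(\OO_E,k(\xi)) = k(\xi)$. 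Since $\J$ is $\OO_E$-flat, the functor $\J\tensor_{\OO_E}-$ is exact and commutes with $\mathrm{Tor}_*^{\OO_S}(\OO_E,-)$, so
\begin{equation*}
   \mathrm{Tor}_1^{\OO_S}(\J, k(\xi))
   \;\cong\; \J\tensor_{\OO_E} k(\xi)
   \;\cong\; \I_{\mathcal Y_\xi/\mathcal V_\xi}.
\end{equation*}
The long exact Tor sequence therefore becomes
\begin{equation*}
   0 \to \I_{\mathcal Y_\xi/\mathcal V_\xi} \to \I_{\mathcal Y\cup\mathcal V}\tensor k(\xi) \to \I_{\mathcal Y_\xi} \to \I_{\mathcal Y_\xi/\mathcal V_\xi} \to 0,
\end{equation*}
whose rightmost map, upon unwinding definitions, is the canonical surjection induced by $\mathcal Y_\xi\subset\mathcal V_\xi$. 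Its kernel is $\I_{\mathcal V_\xi}$, so splitting this four-term sequence in two gives the desired extension.

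For the flatness assertion, assume $S$ is integral near $E$. By the standard criterion it suffices to check that $n\mapsto\chi(X,(\I_{\mathcal Y\cup\mathcal V}\tensor k(\xi))(n))$ is independent of $\xi$. For $\xi\notin E$ the subscheme $\mathcal V$ has empty fiber and the fiber of $\I_{\mathcal Y\cup\mathcal V}$ equals $\I_{\mathcal Y_\xi}$; for $\xi\in E$, Euler-characteristic additivity applied to the extension just produced, combined with $0\to\I_{\mathcal V_\xi}\to\I_{\mathcal Y_\xi}\to\I_{\mathcal Y_\xi/\mathcal V_\xi}\to 0$, again gives $\chi(\I_{\mathcal Y_\xi}(n))$. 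This is constant in $\xi$ by $S$-flatness of $\mathcal Y$, whence flatness follows.

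The main obstacle is the identification of $\mathrm{Tor}_1^{\OO_S}(\J, k(\xi))$: the crucial point is that $\J$ is scheme-theoretically supported on the Cartier divisor $X\times E$ (so the local parameter $t$ acts as zero) while simultaneously being $\OO_E$-flat, which is exactly the hypothesis making base change through $\OO_E$ applicable.
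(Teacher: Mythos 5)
Your argument is correct, and it reaches the short exact sequence by a genuinely different (though closely parallel) route from the paper. The paper tensors $0\to\I_{\mathcal Y\cup\mathcal V}\to\OO_{X\times S}\to\OO_{\mathcal Y\cup\mathcal V}\to 0$ with $\OO_{X\times E}$ and computes $\TTor_1^{X\times S}(\OO_{\mathcal Y\cup\mathcal V},\OO_{X\times E})$ by hand as the $f$-torsion of $\OO_{\mathcal Y\cup\mathcal V}$, $f$ a local equation for $E$, via the colon-ideal identity $(I_{\mathcal Y}\cap I_{\mathcal V}:f)=(I_{\mathcal Y}:f)=I_{\mathcal Y}$ --- the last equality being where $S$-flatness of $\mathcal Y$ enters. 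This yields a relative exact sequence on $X\times E$ first, which is only then restricted to the fiber over $\xi$. You instead start from $0\to\I_{\mathcal Y\cup\mathcal V}\to\I_{\mathcal Y}\to\J\to 0$ and tensor directly with $k(\xi)$, computing $\TTor_1^{\OO_S}(\J,k(\xi))$ by flat base change through $\OO_E$; flatness of $\mathcal Y$ enters instead through the $\OO_E$-flatness of $\J$ and the vanishing of $\TTor_1^{\OO_S}(\I_{\mathcal Y},k(\xi))$. Amusingly, the same module $\J\cong\I_{\mathcal Y}/(\I_{\mathcal Y}\cap\I_{\mathcal V})\cong\I_{(\mathcal Y\times_S E)/\mathcal V}$ appears in both arguments, once as a torsion submodule and once as a cokernel. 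Your route trades the explicit local computation for the change-of-rings isomorphism $\TTor_i^{\OO_S}(\J,-)\cong\J\tensor_{\OO_E}\TTor_i^{\OO_S}(\OO_E,-)$, at the small cost of having to identify the last map of the resulting four-term sequence and split the sequence at $\I_{\mathcal V_\xi}$ --- a step you handle correctly, since under the embedding $\J\tensor k(\xi)\hookrightarrow\OO_{\mathcal V_\xi}$ (valid because $\OO_{\mathcal Y_\xi}$ has no higher Tor against $k(\xi)$ over $\OO_E$) that map is just restriction of functions to $\mathcal V_\xi$, with kernel $\I_{\mathcal V_\xi}$. The paper's version has the mild advantage of producing the sequence in families over $E$ before specializing; yours stays entirely at the level of a single fiber. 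The deduction of flatness from constancy of Hilbert polynomials over an integral base is identical in the two arguments.
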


\begin{proof}
Observe that the last claim is implied by the first: outside of $E$ the
ideal $\I_{\mathcal Y\cup \mathcal V}$ agrees with $\I_{\mathcal Y}$,
which is flat. For $\xi\in E$ we have $\mathcal Y_\xi
\subset \mathcal V_\xi$ and so a short exact sequence
\begin{equation*}
   0 \to \I_{\mathcal V_\xi}
   \to \I_{\mathcal Y_\xi}
   \to \I_{\mathcal Y_\xi/\mathcal V_\xi} \to 0.
\end{equation*}
The short exact sequence in the statement has the same sub and quotient
objects in opposite roles, so the Hilbert polynomial of
$\I_{\mathcal Y\cup\mathcal V} \tensor k(\xi)$ agrees with that of
$\I_{\mathcal Y_\xi}$. Thus the Hilbert polynomial of the fibers of
$\I_{\mathcal Y\cup\mathcal V}$ is constant; over an integral base this
implies flatness.

Begin with the short exact sequence
\begin{equation*}
   0 \to \I_{\mathcal Y\cup\mathcal  V} \to \OO_{X\times S} \to
   \OO_{\mathcal Y\cup\mathcal  V} \to 0
\end{equation*}
and tensor with $\OO_{X\times E}$ to obtain the exact sequence
\begin{equation}\label{eq:tor-sequence}
   0 \to
   \TTor_1^{X\times S}(\OO_{\mathcal Y\cup \mathcal V}, \OO_{X\times E})
   \to \res{\I_{\mathcal Y\cup \mathcal V}}{E}
   \to \OO_{X\times E}
   \to \OO_{\mathcal V}
   \to 0.
\end{equation}
The kernel of the rightmost map is clearly the ideal $\I_{\mathcal
V} \subset \OO_{X\times E}$. To compute the Tor-sheaf on the left use
the short exact sequence
\begin{equation*}
   0 \to \OO_S(-E) \to \OO_S \to \OO_E \to 0.
\end{equation*}
Pull this back to $X\times S$ and tensor with $\OO_{\mathcal Y\cup
\mathcal V}$ to see that $\TTor_1^{X\times S}(\OO_{\mathcal Y\cup
\mathcal V}, \OO_{X\times E})$ is isomorphic to the kernel of the
homomorphism
\begin{equation*}
   \OO_{\mathcal Y\cup \mathcal V}(-\mathit{pr}_2^*E)
   \to \OO_{\mathcal Y\cup \mathcal V}
\end{equation*}
which locally is multiplication by an equation for $E$. Thus
\begin{equation*}
   \TTor_1^{X\times S}(\OO_{\mathcal Y\cup \mathcal V}, \OO_{X\times E})
   \iso \J(-\mathit{pr}_2^*E)
\end{equation*}
where $\J\subset \OO_{\mathcal Y\cup \mathcal V}$ is the ideal
locally consisting of elements annihilated by an equation for $E$.

We compute $\J$ in an open affine subset $\Spec A \subset X\times S$ in
which $\mathcal Y$ and $\mathcal V$ are given by ideals $I_{\mathcal Y}$
and $I_{\mathcal V}$ respectively and $f\in A$ is (the pullback of) a
local equation for $E$. Thus $\J$ corresponds to $(I_{\mathcal Y}\cap
I_{\mathcal V} : f)/(I_{\mathcal Y}\cap I_{\mathcal V})$. Now $f\in
I_{\mathcal V}$ since $\mathcal V\subset X\times E$. This implies that
for $g\in A$ the condition $fg\in I_{\mathcal Y}$ is equivalent to
$fg\in I_{\mathcal Y}\cap I_{\mathcal V}$ and so $(I_{\mathcal Y}\cap
I_{\mathcal V} : f) = (I_{\mathcal Y} : f)$. Moreover the latter equals
$I_{\mathcal Y}$, since $\mathcal Y$ is flat over $S$, so that
multiplication by the non-zero-divisor $f$ remains injective after
tensor product with $\OO_{\mathcal Y}$, that is $A/I_{\mathcal Y}$. Thus
$\J$ is locally
\begin{align*}
   (I_{\mathcal Y}\cap I_{\mathcal V} :f)/(I_{\mathcal Y}\cap I_{\mathcal V})
   &= I_{\mathcal Y}/(I_{\mathcal Y}\cap I_{\mathcal V})\\
   &\iso (I_{\mathcal Y}+I_{\mathcal V})/I_{\mathcal V}
   = I_{\mathcal{Y}_E}/I_{\mathcal V}
\end{align*}
where we write $\mathcal{Y}_E$ for the restriction $\mathcal Y \cap
(X\times E) = \mathcal Y \cap \mathcal V$. This shows
\begin{equation*}
   \TTor_1^{X\times S}(\OO_{\mathcal Y\cup \mathcal V}, \OO_{X\times E})
   \iso \I_{\mathcal{Y}_E/\mathcal{V}}(-\mathit{pr}_2^*E)
\end{equation*}
and \eqref{eq:tor-sequence} gives
the short exact sequence
\begin{equation*}
   0 \to \I_{\mathcal{Y}_E/\mathcal{V}}(-\mathit{pr}_2^*E)
   \to \I_{\mathcal Y\cup \mathcal V}\vline_E
   \to \I_{\mathcal V}
   \to 0
\end{equation*}
on $X\times E$. Finally restrict to the fiber over a point $\xi\in E$:
since $\mathcal{Y}_E$ and $\mathcal V$ are both $E$-flat this yields the
short exact sequence in the statement.
\end{proof}

Lemma \ref{lem:opposite-ses} does not guarantee that the short exact
sequence obtained is nonsplit. Showing this in the case at hand requires
some work. Our strategy is to exhibit a certain quotient sheaf
$\I_{\mathcal Y \cup \mathcal V}\tensor k(\xi) \onto \Q$ and check that
the split extension $\I_{\mathcal V_\xi} \oplus \I_{\mathcal Y_\xi /
\mathcal V_\xi}$ admits no surjection onto $\Q$. In fact $\Q =
\OO_V(-2)$ will work:

\begin{lem}\label{lem:nosurjectionO(-2)}
Let $V\subset\PP^3$ be a plane and $P\in V$ a point. Then there is no
surjection from $\OO_{\PP^3}(-1) \oplus \I_{P/V}(-2)$ onto $\OO_V(-2)$.
\end{lem}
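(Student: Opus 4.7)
My plan is to analyze such a hypothetical surjection componentwise and show that each component can only contribute to a proper subsheaf of $\OO_V(-2)$, so their sum cannot be surjective.

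First I would suppose for contradiction that a surjection
\begin{equation*}
   \phi\colon \OO_{\PP^3}(-1) \oplus \I_{P/V}(-2) \onto \OO_V(-2)
\end{equation*}
exists, and split it as $\phi = (\phi_1, \phi_2)$. For $\phi_1 \in \Hom_{\PP^3}(\OO_{\PP^3}(-1), \OO_V(-2))$, pulling back to $V$ (using that $\OO_V(-2)$ is supported on $V$) gives $H^0(V, \OO_V(-1)) = 0$, so $\phi_1 = 0$. Thus the image of $\phi$ equals the image of $\phi_2$.

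Next I would compute $\Hom_{\PP^3}(\I_{P/V}(-2), \OO_V(-2))$. Since both sheaves are supported on $V$, this equals $\Hom_V(\I_{P/V}, \OO_V)$. Applying $\Hom_V(-, \OO_V)$ to the short exact sequence
\begin{equation*}
   0 \to \I_{P/V} \to \OO_V \to k(P) \to 0
\end{equation*}
on $V \iso \PP^2$ and using that $\OO_V$ is torsion free (so $\Hom_V(k(P), \OO_V) = 0$) and that $\Ext^1_V(k(P), \OO_V) = 0$ (only $\Ext^2$ is nonzero for a skyscraper on a smooth surface, by local duality), I obtain $\Hom_V(\I_{P/V}, \OO_V) \iso \Hom_V(\OO_V, \OO_V) = k$, generated by the natural inclusion.

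Therefore $\phi_2$ is a scalar multiple of the twisted inclusion $\I_{P/V}(-2) \hookrightarrow \OO_V(-2)$, and its image is either $0$ or $\I_{P/V}(-2)$. In either case the image of $\phi$ is a proper subsheaf of $\OO_V(-2)$, contradicting surjectivity. I do not anticipate any real obstacle; the only step requiring a little care is the $\Ext$ computation on $V$, which can alternatively be done by noting that $\I_{P/V}$ is torsion free of rank one so any nonzero map to $\OO_V$ has cokernel supported on a finite set, and a direct length count at $P$ forces the image to be $\I_{P/V}$ itself.
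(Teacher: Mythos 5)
Your proof is correct and follows essentially the same route as the paper's: the paper likewise observes that $\Hom(\OO_{\PP^3}(-1),\OO_V(-2))=0$ and that $\Hom(\I_{P/V}(-2),\OO_V(-2))=k$ is generated by the (nonsurjective) inclusion. You have merely filled in the details of these two $\Hom$ computations, which the paper asserts without proof.
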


\begin{proof}
Just note that $\Hom(\I_{P/V}(-2), \OO_V(-2)) = k$ is generated by the
(nonsurjective) inclusion, whereas
$\Hom(\OO_{\PP^3}(-1), \OO_V(-2)) = 0$.
\end{proof}

We will produce the required quotient sheaf by the following
construction, which depends on the choice of a tangent direction at
$\xi$ in $S$:

\begin{lem}\label{lem:quotientexists1}
With notation as in Lemma \ref{lem:opposite-ses}, let
$T = \Spec k[t]/(t^2)$ and let $T\subset S$ be a closed embedding such
that $T\cap E$ is the reduced point $\{\xi\}$. Let $Y = \mathcal Y_\xi$
and $V = \mathcal V_\xi$.

Define a subscheme $Y'\subset Y$ by the ideal 
\begin{equation*}
   (\I : t)/(t) \subset \OO_V
\end{equation*}
where $\I \subset \OO_{V \times T}$ is the ideal of
$\mathcal Y \cap (V \times T)$. Then there is a surjection
\begin{equation*}
   \I_{\mathcal Y\cup \mathcal V} \tensor k(\xi)
   \onto \I_{Y'/V}.
\end{equation*}
\end{lem}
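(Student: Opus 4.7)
The plan is to construct the surjection as the natural restriction of the inclusion $\I_{\mathcal Y\cup \mathcal V}\subset \OO_{X\times S}$ down to the fiber $\OO_{V\times T}$, composed with the $\OO_V$-module isomorphism $\OO_V\iso \OO_V\cdot t$, $a\mapsto at$ (informally, ``division by $t$'').

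First I would interpret $Y'$ geometrically as (the $t$-twist of) the ideal in $V\times T$ of the scheme-theoretic intersection $W = (\mathcal Y\cup \mathcal V)\cap (V\times T)$. Because $\mathcal V\subset X\times E$ and $E\cap T = \{\xi\}$ is reduced, $\mathcal V\cap (V\times T)$ is the fiber $V = \mathcal V_\xi$ sitting in $V\times T$ with ideal $(t)\subset\OO_{V\times T}$, while $\mathcal Y\cap (V\times T)$ has ideal $\I$ by definition. In the dual-number ring $\OO_V[t]/(t^2)$ every ideal is determined by its reduction $A = \I \pmod{t}$ and by $B = (\I:t)/(t)$, with $A\subset B$, and a direct computation gives $\I\cap (t) = B\cdot t = \I_{Y'/V}\cdot t$. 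Under $a\mapsto at$ this is $\I_{Y'/V}$. The projection $\OO_{X\times S}\twoheadrightarrow\OO_{V\times T}$ then yields a natural arrow $\I_{\mathcal Y\cup\mathcal V}\to \I_{Y'/V}\cdot t$, and it factors through $\otimes_{\OO_S}k(\xi)$: the image sits in $\OO_V\cdot t$ and is thus annihilated by $t$, while the hypothesis $T\cap E = \{\xi\}$ reduced forces $\mathfrak m_\xi$ to map into $(t)\subset \OO_T$.

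The main obstacle is to show that the map is actually \emph{onto} $\I_{Y'/V}$, rather than merely landing inside it. The subtle point is that the image of the intersection $\I_\mathcal{Y}\cap \I_\mathcal{V}$ in $\OO_{V\times T}$ could \emph{a priori} be strictly smaller than the intersection $\I\cap (t)$ of the separate images, since base change does not commute with intersection of ideals in general. To close this gap I would exploit the $S$-flatness of $\mathcal Y$ together with the inclusion $\mathcal Y\cap (X\times E)\subset \mathcal V$. Working locally, where $\I_\mathcal{V} = (h, f)$ is a regular sequence ($h$ an equation for $V$, $f$ for $E$), $S$-flatness makes $\bar f$ a non-zero-divisor in $\OO_\mathcal{Y}$, and the inclusion over $E$ yields a factorization $\bar h = \bar f\bar\beta$ in $\OO_\mathcal{Y}$, i.e.\ $h = \alpha + f\beta$ with $\alpha\in \I_\mathcal{Y}$. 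A Koszul argument on $(h, f)$ then shows that the kernel of $\OO_\mathcal{Y}^2\to \OO_\mathcal{Y}$, $(a, b)\mapsto \bar h a + \bar f b$, is cyclic generated by $(1, -\bar\beta)$; lifting back to $R^2$ gives the decomposition $\I_\mathcal{Y}\cap \I_\mathcal{V} = \alpha\cdot R + \I_\mathcal{V}\cdot \I_\mathcal{Y}$. Computing the images of these two summands under the restriction map yields $(\bar\beta_0)\cdot t$ and $\I_{Y/V}\cdot t$ respectively, where $\bar\beta_0 = \beta|_{V,\xi}\in \OO_V$, and a direct check identifies their sum with $B\cdot t = \I_{Y'/V}\cdot t$, completing the proof.
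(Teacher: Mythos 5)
Your route is genuinely different from the paper's. The paper identifies $\I_{Y'/V}$ with the relative ideal of $\mathcal Y_T\cup(V\times\{\xi\})$ inside $\mathcal Y_T\cup(V\times T)$ via the isomorphism $(I+J)/I\iso J/(I\cap J)$, together with the same observation you make that multiplication by $t$ identifies $(\I:t)/(t)$ with $\I\cap(t)$; the surjection is then the composite of the restriction $\I_{\mathcal Y\cup\mathcal V}\tensor\OO_T\onto\I_{(\mathcal Y\cup\mathcal V)|_T}$ with the quotient map onto that relative ideal. The difficulty you flag --- that forming the ideal of a scheme-theoretic union need not commute with base change --- is in fact also the delicate point of the published argument (hidden in the identification $(\mathcal Y\cup\mathcal V)|_T=\mathcal Y_T\cup(V\times\{\xi\})$), and your local decomposition $I_{\mathcal Y}\cap I_{\mathcal V}=R\alpha+I_{\mathcal Y}I_{\mathcal V}$ is correct and attacks it head-on: since $\bar f$ is a non-zero-divisor on $\OO_{\mathcal Y}$ by flatness and $\bar h=\bar f\bar\beta$, the kernel you describe is indeed cyclic, and since products of ideals (unlike intersections) do commute with restriction, the image of $I_{\mathcal Y}I_{\mathcal V}$ is $\I_{Y/V}\cdot t$ as you say.

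The gap is the final ``direct check''. What must be verified is $\I_{Y/V}+(\bar\beta_0)=\I_{Y'/V}$, i.e.\ $A+(\bar\beta_0)=B$. The inclusion $\subseteq$ is automatic --- it merely restates that the image of $I_{\mathcal Y}\cap I_{\mathcal V}$ lands inside $\I\cap(t)$ --- so the reverse inclusion is precisely the surjectivity you set out to prove, and as written the argument begs the question at exactly this point. The statement is true, but it needs a genuine further input relating $\beta$ to the deformation defining $Y'$. One way to close it: describe the $T$-flat family by its normal vector $\nu\in\Hom(I_Y,\OO_Y)$, so that $\I_{\mathcal Y_T}$ consists of the elements $g+g't$ with $g\in I_Y$ and $g'\equiv\nu(g)\bmod I_Y$ (plus $tI_Y$); using $I_Y\cap I_V=(h_0)$ and $\OO_V$-linearity of $\nu$ one gets $B=A+(\nu(h_0))$, while restricting $h=\alpha+f\beta$ to $X\times T$ forces $\bar\beta_0\equiv-\nu(h_0)\bmod A$, whence equality. (A related small point: for the image of $R\alpha$ to be $(\bar\beta_0)\cdot t$ rather than $((h_1-\bar\beta_0))\cdot t$ you must normalize the lift $h$ so that its restriction to $X\times T$ is a $t$-independent equation of $V$; this is possible since $I_E+I_T=\mathfrak m_\xi$, but an arbitrary lift contributes an extra term $h_1=(h|_{V\times T})/t$.) With these points supplied your proof is complete, and it makes the base-change subtlety more explicit than the published one.
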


\begin{rem}
Since $t^2=0$ we trivially have $t \in (\I:t)$. Since also $\I \subset
(\I : t)$ we furthermore have $Y'\subset Y$. If we extend $T$ to an
actual one parameter family of objects $Y_t$, we may think of $Y'$ as
the limit of $Y_t \cap V$ as $t\to 0$, in other words it is the part of
$Y$ that remains in $V$ as we deform along our chosen direction.
\end{rem}

\begin{proof}
Let $\mathcal Y_T \subset \PP^3\times T$ denote the restriction of
$\mathcal Y$ to $T$. We \emph{claim} that $\I_{Y'/V}$ is isomorphic to
the relative ideal of $\mathcal Y_T \cup (V\times \{\xi\})$ in
$\mathcal Y_T \cup (V\times T)$. Assuming this, there are surjections
\begin{equation*}
   \I_{\mathcal Y \cup \mathcal V}\tensor \OO_T
   \onto \I_{\mathcal Y_T \cup (V\times \{\xi\})}
   \onto \I_{Y'/V}
\end{equation*}
(the middle term is the ideal of $\res{(\mathcal Y\cup \mathcal V)}{T} =
\mathcal Y_T \cup (V \times\{\xi\})$ as a subscheme of $\PP^3\times T$).
Restriction to the fiber over $\xi$ gives the surjection in the
statement.

To prove the claim, we first observe that for any two subschemes $A$ and
$B$ of some ambient scheme, there is an isomorphism
\begin{equation*}
   \I_{(A\cap B)/A} \iso \I_{B/(A\cup B)}
\end{equation*}
between the relative ideal sheaves; this is the identity $(I+J)/I \iso
J/(I\cap J)$ between quotients of ideals. Apply this to
\begin{equation*}
   A = V \times T,\quad
   B = \mathcal Y_T \cup (V\times \{\xi\})
\end{equation*}
so that
\begin{align*}
   A \cup B &=
   \mathcal Y_T \cup (V \times T) \\
   A \cap B &=
   (\mathcal Y_T \cup (V \times \{\xi\})) \cap (V\times T)
   = (\mathcal Y \cap (V\times T)) \cup (V\times \{\xi\}).
\end{align*}
The \emph{claim} as stated thus says that  $\I_{Y'/V}$ is isomorphic to
$\I_{B/A\cup B}$, and we are free to replace the latter by
$\I_{(A\cap B)/A}$.

Next let $\Spec R$ be an affine open subset in $V$ and $I\subset
R[t]/(t^2)$ the ideal defining $\mathcal Y \cap (V\times T)$ there.
Locally the ideal $\I_{(A\cap B)/A}$ is then $I\cap (t) \subset
R[t]/(t^2)$. Now multiplication with $t$ is an isomorphism of
$R[t]/(t^2)$-modules
\begin{equation*}
   (I:t)/(t) \iso I\cap (t).
\end{equation*}
The left hand side is precisely $\I_{Y'/V}$ in the open subset
$\Spec R$.
\end{proof}

\begin{proof}[Proof of Theorem \ref{thm:modulichamberII} (i)]
By Lemma \ref{lem:opposite-ses} it suffices to show that the fiber of
$\I_{\mathcal Y \cup \mathcal V} \tensor k(\xi)$ over $\xi\in E$ is
isomorphic to $\F_{P,V}$. Moreover, for such $\xi$, the same Lemma
yields a short exact sequence
\begin{equation*}
   0 \to \I_{P/V}(-2) \to
   \I_{\mathcal Y \cup \mathcal V} \tensor k(\xi)
   \to \OO_{\PP^3}(-1) \to 0
\end{equation*}
and since $\F_{P,V}$ is the unique such nonsplit extension it is enough
to show that the above extension is nonsplit. In view of Lemma
\ref{lem:nosurjectionO(-2)} this follows once we can show the existence
of a surjection
\begin{equation*}
   \I_{\mathcal Z \cup \mathcal W} \tensor k(\xi) \onto \OO_V(-2).
\end{equation*}
For this it suffices, in the notation of Lemma
\ref{lem:quotientexists1}, to choose $T\subset \hilb^{2m+2}(\PP^3)$ such
that the subscheme $Y'\subset V$ is a conic.

\emph{Nondegenerate case.} First assume $Y = \mathcal Y_\xi$ is a
disjoint union $Y = C\cup\{P\}$ of a conic $C\subset V$ and a point
$P\in V$. Consider the one parameter family $Y_t = C \cup \{P_t\}$ in
which the conic part $C$ is fixed while the point $P_t$ travels along a
line intersecting $V$ in the point $P$. In suitable affine coordinates
we may take $V$ to be the plane $z=0$ in $\Aff^3 = \Spec k[x,y,z]$, the
point $P$ to be the origin and $C$ to be given by some quadric $q =
q(x,y)$ not vanishing at $P$. Let the one parameter family over $\Spec
k[t]$ consist of the union of $C$ with the point $P_t = (0,0,t)$. This
is given by the ideal
\begin{equation*}
   (q, z) \cap (x,y,z-t) = (xq, yq, (z-t)q, xz,yz,(z-t)z).
\end{equation*}
Now restrict to $T = \Spec k[t]/(t^2)$ and intersect the family with
$V\times T$. The resulting subscheme is defined by the ideal 
\begin{equation*}
   I = (xq, yq, (z-t)q, xz,yz,(z-t)z) + (z)
   = (xq, yq, t q, z)
\end{equation*}
and $(I:t)/(t) = (q, z)$ which defines $C\subset V$. Thus $Y' = C$ and
we are done.

\emph{Embedded point with nonsingular support.} Suppose $Y$ is a conic
$C\subset V$ with an embedded point supported at a point $P$ in which
$C$ is nonsingular, where the normal direction corresponding to the
embedded point is along $V$. Then take the one parameter family in which
$C$ and the supporting point $P$ is fixed and the embedded structure
varies in the $\PP^1$ of normal directions. In suitable affine
coordinates we may take $V$ to be the plane $z=0$ in $\Aff^3 = \Spec
k[x,y,z]$, $P$ to be the origin and $C$ given by a quadric $q = q(x,y)$
vanishing at $P$ and with, say, linear term $y$. Take the one parameter
family of $C$ with an embedded point given by
\begin{equation*}
   (q, z) \cap (x,y^2,z-ty) = (xq, yq, zq, z-tq)
\end{equation*}
(the equality requires some computation). After intersection with
$V\times T$ this gives
\begin{equation*}
   I=(xq,yq,tq, z)
\end{equation*}
and $(I:t)/(t) = (q,z)$. This is $C$.

\emph{Embedded point at a singularity.} Let $C\subset V$ be the union of
two distinct lines intersecting in $P$ and consider a planar embedded
point at $P$. Despite the singularity, there is still a $\PP^1$ of
embedded points at $P$. We take this to be our one parameter family,
i.e.\ we deform the embedded point structure away from the planar one.

In local coordinates we take $P$ to be the origin in $\Aff^3$ and $C$ to
be the union of the $x$- and $y$-axes in the $xy$-plane $V$. Then
\begin{equation*}
   (xy,z)(x,y,z) + (z-txy) = (xy^2,x^2y,z-txy)
\end{equation*}
is our one parameter family of embedded points at the origin, with $t=0$
corresponding to the planar embedded point. The intersection with
$V\times T$ is given by
\begin{equation*}
   I=(xy^2,x^2y,z,t xy)
\end{equation*}
and $(I:t)/(t) = (xy,z)$. This is $C$.

\emph{Embedded point in a double line.} Let $C\subset V$ be a planar
double line together with a planar embedded point at $P\in C$ and take
the one parameter family of embedded points in $P$.

In local coordinates we take $P$ to be the origin in $\Aff^3$ and $C$ to
be $V(z,y^2)$. Then
\begin{equation*}
   (z,y^2)(x,y,z) + (z-ty^2) = (y^3,xy^2,z-ty^2)
\end{equation*}
is our one parameter family of embedded points at the origin, with $t=0$
corresponding to the planar embedded point. The intersection with
$V\times T$ is given by
\begin{equation*}
   I=(xy^2,y^3,z,t y^2)
\end{equation*}
and $(I:t)/(t)=(z,y^2)$. This is $C$.
\end{proof}

\begin{proof}[Proof of Theorem \ref{thm:modulichamberII} (ii)]
The morphism $\hilb^{2m+2}(\PP^3) \to \MII$ is clearly an isomorphism
away from $E$, and it sends $\xi\in E$ (lying over $(P,V)\in I$) to
$\F_{P,V}$, which determines and is uniquely determined by $(P,V)$.
Moreover $\MII$ is smooth at these points by Proposition
\ref{pro:dimExtF}. The claim follows from uniqueness of normal
contractions.
\end{proof}

\subsection{Moduli in chamber III}\label{sec:chamberIII}

In this section we show that the moduli space $\MIII$ is a contraction
of $\mathcal S$. The argument parallels that for $\MII$ closely.

Let $F\subset \mathcal S$ be as in Notation \ref{notn:divisors}. Thus an
element $Y\in \mathcal S$ is either a pair of intersecting lines with a
spatial embedded point at the intersection, or as degenerate cases, a
planar double line with a spatial embedded point. It is in a natural way
a $\PP^2$-bundle over the incidence variety
$I\subset \PP^3\times\check\PP^3$ via the map
\begin{equation*}
   F \to I
\end{equation*}
that sends $Y$ to the pair $(P,V)$ consisting of the support $P\in Y$ of
the embedded point and the plane $V$ containing $Y\setminus\{P\}$. In
parallel with Proposition \ref{prop:negative-one} one may show that
$\OO_{\mathcal S}(F)$ restricts to $\OO_{\PP^2}(-1)$ in the fibers of
$F/I$ and so there is a contraction
\begin{equation}\label{eq:S-contraction}
   \psi\colon \mathcal S\to\mathcal S'
\end{equation}
to a smooth algebraic space $\mathcal S'$, such that $\psi$ is an
isomorphism away from $F$ and restricts to the $\PP^2$-bundle $F\to I$.
However, in this case we can be much more concrete thanks to the work of
Chen--Coskun--Nollet \cite{CCN2011}, where birational models for
$\mathcal S$ are studied in detail (and in greater generality: moduli
spaces for pairs of codimension two linear subspaces of projective
spaces in arbitrary dimension). The following proposition is
\cite[Theorem 1.6 (4)]{CCN2011}; we sketch a simple and slightly
different argument here.

\begin{pro}\label{prop:grassmann}
There is a contraction as in \eqref{eq:S-contraction} where
$\mathcal S'$ is the Grassmannian $G(2,6)$ of lines in $\PP^5$.
\end{pro}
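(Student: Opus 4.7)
The plan is to exhibit a concrete morphism $\chi\colon \mathcal{S}\to G(2,6)$ that contracts exactly the divisor $F$ to a subvariety isomorphic to $I$ in the same way as the abstract $\psi$, and then invoke the uniqueness of smooth (normal) contractions recorded just after Corollary \ref{cor:Artincontraction} to conclude $\mathcal{S}'\iso G(2,6)$.

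First I would construct $\chi$ via the Pl\"ucker embedding $G(2,4)\hookrightarrow \PP^5$. On $\hilb^2(\PP^5)$ there is a natural \emph{span} morphism to $G(2,6)$: for a length-$2$ subscheme $Z\subset\PP^5$, the evaluation $H^0(\PP^5,\OO(1))\to H^0(Z,\OO_Z(1))$ has a $4$-dimensional kernel cutting out a unique line $\langle Z\rangle\subset\PP^5$. This is functorial in flat families and so defines a morphism $\hilb^2(\PP^5)\to G(2,6)$. Restricting along the closed embedding $\mathcal{S}=\hilb^2(G(2,4))\hookrightarrow \hilb^2(\PP^5)$ yields $\chi$.

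Next I would compute the fibers of $\chi$. Let $Q\subset\PP^5$ denote the image of $G(2,4)$. For a line $\ell\subset\PP^5$, either $\ell\not\subset Q$, in which case $\ell\cap Q$ is a length-$2$ subscheme and any $Z\in\chi^{-1}(\ell)$ satisfies $Z\subset\ell\cap G(2,4)=\ell\cap Q$, so $\chi^{-1}(\ell)=\{\ell\cap Q\}$ is a single reduced point; or $\ell\subset Q$, in which case every length-$2$ subscheme of $\ell\iso\PP^1$ lies in $G(2,4)$, so $\chi^{-1}(\ell)=\hilb^2(\ell)\iso\PP^2$. The classical fact underlying the identification with $\psi$ is that lines on $G(2,4)$ are exactly the Schubert pencils $\{L : P\in L\subset V\}$ for $(P,V)\in I$; this gives an isomorphism of $I$ with the closed subvariety of $G(2,6)$ parametrizing lines on $Q$.

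To finish, one compares $\chi$ with the abstract $\psi$: outside $F$, the map $\chi$ is a bijective morphism between smooth $8$-dimensional varieties, hence an isomorphism onto $G(2,6)\setminus I$ by Zariski's Main Theorem; on $F$ it is the $\PP^2$-bundle $F\to I$ over the incidence variety. Applying the uniqueness criterion for normal contractions cited in the excerpt, there is a unique isomorphism $\mathcal{S}'\iso G(2,6)$ intertwining $\psi$ and $\chi$. The main obstacle, both conceptually and technically, is the verification that on $F$ the two $\PP^2$-structures coincide: one must check that sending a length-$2$ subscheme $Z$ of the pencil $\ell_{P,V}$ on $G(2,4)$ to the corresponding subscheme of $\PP^3$ automatically produces the spatial embedded point at $P$, so that $\hilb^2(\ell_{P,V})$ is canonically identified with the fiber of $F\to I$ over $(P,V)$ (which parametrizes line pairs and planar double lines through $P$ in $V$ equipped with their spatial embedded structure).
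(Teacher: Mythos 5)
Your proposal is correct and follows essentially the same route as the paper's own (sketched) proof: both use the span morphism $\hilb^2(Q)\to G(2,n+1)$ for a quadric $Q$, identify its fibers as a point off the locus of lines in $Q$ and as $\hilb^2(\ell)\iso\PP^2$ over a line $\ell\subset Q$, specialize to the Pl\"ucker quadric $G(2,4)\subset\PP^5$ whose lines are exactly the pencils indexed by $I$, and conclude by uniqueness of the contraction. The compatibility of the two $\PP^2$-structures over $I$ that you flag as the main remaining check is immediate from the identification $\mathcal S\iso\hilb^2(G(2,4))$ together with the definition of $F\to I$, and the paper treats it as such.
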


\begin{proof}
First consider an arbitrary quadric $Q\subset \PP^n$. Any finite
subscheme in $Q$ of length $2$, reduced or not, determines a line in
$\PP^n$. This defines a morphism
\begin{equation}\label{eq:hilb2Q}
   \hilb^2(Q) \to G(2, n+1).
\end{equation}
It is clearly an isomorphism away from the locus in $G(2,n+1)$
consisting of lines contained in $Q$. On the other hand, over every
element of $G(2,n+1)$ defining a line contained in $Q$, the fiber is the
$\PP^2$ consisting of length two subschemes of that line.

Apply the above observation to the (Plücker) quadric $Q=G(2,4)$ in
$\PP^5$, so that $\mathcal S \iso \hilb^2(Q)$ (see Section
\ref{sec:twocomponents}). For every plane $V\subset \PP^3$ and every
point $P\in V$, the pencil of lines in $V$ through $P$ defines a line in
$Q=G(2,4)$ and in fact every line is of this form. The fiber of
\eqref{eq:hilb2Q} above such an element of $G(2,5)$ consists of all
pairs of lines in $V$ intersecting at $P$. It follows that
\eqref{eq:hilb2Q} is the required contraction
$\mathcal S \to \mathcal S'$.
\end{proof}

\begin{rem}
Chen--Coskun--Nollet furthermore shows that \eqref{eq:S-contraction} is
a $K$-negative extremal contraction in the sense of Mori theory. In
fact, $\mathcal S$ is Fano and its Mori cone is spanned by two rays.
Either ray is thus contractible; one contraction is
\eqref{eq:S-contraction} and the other is the natural map to the
symmetric square of the Grassmannian of lines in $\PP^3$. This statement
is extracted from Theorem 1.3, Lemma 3.2 and Proposition 3.3 in
\emph{loc.\ cit.} Inspired by this work we return to the Mori cone of
the conics-with-a-point component $\mathcal C$ in Section
\ref{sec:mori}.
\end{rem}

We proceed as for chamber II by modifying the universal family of pairs
of lines in order to identify the moduli space $\MIII$ with the
contracted space $\mathcal S'$. Let
\begin{equation*}
   \mathcal Y \subset \PP^3\times \mathcal S
\end{equation*}
be the restriction of the universal family over $\hilb^{2m+2}(\PP^3)$ to
the component $\mathcal S$. Moreover, there is a flat family over the
incidence variety $I\subset\PP^3\times\check\PP^3$ whose fiber over
$(P,V)$ is the plane $V$ with an embedded point at $P$. Pull this back
to $F$ to define a family
\begin{equation*}
   \mathcal W \subset \PP^3\times F.
\end{equation*}
We argue as in Section \ref{sec:familychamberII} but with the family of
planes $\mathcal V$ replaced by the family of planes with an embedded
point $\mathcal W$.

\begin{thm}\label{thm:modulichamberIII}
Let $\mathcal Y$ and $\mathcal W$ be as above.
\begin{itemize}
   \item[(i)]
   $\I_{\mathcal Y \cup \mathcal W}$ is flat as a coherent sheaf over
   $\mathcal S$. Its fibers $\I_{\mathcal Y\cup\mathcal W} \tensor
   k(\xi)$ over $\xi\in\mathcal S$ are stable objects for stability
   conditions in chamber III.
   \item[(ii)]
   The morphism $\mathcal S \to \MIII$ determined by $\I_{\mathcal Y
   \cup \mathcal W}$ induces an isomorphism $\mathcal S' \iso \MIII$.
\end{itemize}
\end{thm}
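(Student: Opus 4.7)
The plan is to mirror the proof of Theorem \ref{thm:modulichamberII} closely, with $E$ and $\mathcal V$ replaced throughout by $F$ and $\mathcal W$. I will first apply Lemma \ref{lem:opposite-ses}: its hypothesis $\mathcal Y \cap (\PP^3 \times F) \subset \mathcal W$ holds, since for $\xi \in F$ mapping to $(P, V) \in I$, the fiber $\mathcal Y_\xi$ is a pair of incident lines (or a planar double line) carrying a spatial embedded point at $P$, and $\mathcal W_\xi$ is the plane $V$ carrying the same spatial embedded point, so that $\mathcal Y_\xi \subset \mathcal W_\xi$ scheme-theoretically by a direct check as in Example \ref{ex:embedded}. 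The lemma then yields the flatness of $\I_{\mathcal Y \cup \mathcal W}$ over $\mathcal S$, together with a short exact sequence
\begin{equation*}
   0 \to \I_{Y/\mathcal W_\xi} \to \I_{\mathcal Y \cup \mathcal W} \tensor k(\xi) \to \I_{\mathcal W_\xi} \to 0
\end{equation*}
for each $\xi \in F$.

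Next I identify the outer terms. Locally $\I_{\mathcal W_\xi} = \I_V \cdot \mathfrak m_P$, and as $\I_V \iso \OO_{\PP^3}(-1)$ is invertible this product is isomorphic to $\I_P(-1)$ globally. Writing $Y = C \cup \{\text{spatial point at }P\}$ with $C \subset V$ the underlying (possibly degenerate) conic, the relative ideal $\I_{Y/\mathcal W_\xi}$ is generated by the conic equation and identifies with $\I_{C/V} \iso \OO_V(-2)$. Hence each fiber is an extension of $\I_P(-1)$ by $\OO_V(-2)$, and is therefore either the non-split $\G_{P,V}$ or the split sum.

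The main technical step is ruling out the split case. The direct analog of the chamber II strategy via Lemmas \ref{lem:nosurjectionO(-2)} and \ref{lem:quotientexists1} is harder to implement here because $\OO_V(-2) \oplus \I_P(-1)$ admits surjections onto the obvious candidate targets $\I_{Y'/\mathcal W_\xi}$ produced by one-parameter families transverse to $F$. A clean distinguishing criterion is instead provided by the surjection $\G_{P,V} \twoheadrightarrow \Omega^1_V$ from the middle column of Lemma \ref{lem:diagramG}: the split sum cannot surject onto $\Omega^1_V$, since a short computation using the Euler sequence and the Koszul resolution of $k(P)$ gives $\Hom(\I_P(-1), \Omega^1_V) = 0$, forcing any map from the split sum to factor through the rank-$1$ summand $\OO_V(-2) \to \Omega^1_V$, whose image has rank $\le 1$ in a rank-$2$ target. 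I plan to exhibit the required surjection on the fibers of $\I_{\mathcal Y \cup \mathcal W}$ by realizing $\I_{\mathcal Y \cup \mathcal W}|_{\PP^3 \times F}$ as the pullback along $F \to I$ of a universal family $\mathcal G$ on $\PP^3 \times I$, built by performing the fiber-product construction of Lemma \ref{lem:diagramG} in families using the universal plane $\mathcal V$, the universal point $\mathcal P$, and the relative cotangent sheaf $\Omega^1_{\mathcal V/I}$; by construction $\mathcal G$ carries a fiberwise surjection onto $\Omega^1_V$.

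Part (ii) then follows as in the proof of Theorem \ref{thm:modulichamberII}(ii). The classifying morphism $\mathcal S \to \MIII$ determined by $\I_{\mathcal Y \cup \mathcal W}$ is an isomorphism outside $F$ and sends $\xi \in F$ to the isomorphism class of $\G_{P_\xi, V_\xi}$, which depends only on the image $(P_\xi, V_\xi) \in I$. Hence the morphism factors through the contraction $\psi \colon \mathcal S \to \mathcal S'$ as $\mathcal S \to \mathcal S' \to \MIII$. By Proposition \ref{pro:dimExtG}, $\dim \Ext^1(\G_{P,V}, \G_{P,V}) = 8 = \dim \mathcal S'$, so $\MIII$ is smooth at each $\G_{P,V}$. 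Uniqueness of normal contractions, as invoked in Corollary \ref{cor:Artincontraction}, delivers the desired isomorphism $\mathcal S' \iso \MIII$.
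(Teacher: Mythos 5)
Your framework (flatness via Lemma \ref{lem:opposite-ses}, identification of the outer terms of the fiberwise extension, and the deduction of (ii) from smoothness and uniqueness of normal contractions) matches the paper. The problem is the central step: ruling out the split extension. Your proposed criterion --- that the split sum $\OO_V(-2)\oplus\I_P(-1)$ cannot surject onto $\Omega^1_V$ while $\G_{P,V}$ can --- is correct as a statement, but the way you plan to \emph{produce} a surjection $\I_{\mathcal Y\cup\mathcal W}\tensor k(\xi)\onto\Omega^1_V$ is circular. You propose to build a family $\mathcal G$ over $I$ by performing the fiber-product construction of Lemma \ref{lem:diagramG} in families, and then to ``realize'' $\res{\I_{\mathcal Y\cup\mathcal W}}{\PP^3\times F}$ as the pullback of $\mathcal G$. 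But that identification is exactly what is at stake: if the fibers over $F$ happened to be the split extension, they would not be pullbacks of $\mathcal G$. Since the fiber is known only to be \emph{some} extension of $\I_P(-1)$ by $\OO_V(-2)$, exhibiting a surjection onto $\Omega^1_V$ is literally equivalent to proving nonsplitness, so no progress has been made; and unlike ideal-sheaf quotients, $\Omega^1_V$ does not arise naturally as a quotient of the fiber via any degeneration argument, so it is not clear how to supply the missing map independently.

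Your reason for abandoning the chamber-II strategy is also not quite right. The paper does carry that strategy over: it applies Lemma \ref{lem:quotientexists1} to suitable one-parameter families (pairs of skew lines degenerating to the incident pair, and an explicit family for the double-line case) to get a quotient $\I_{Y'/W}$, and then --- this is the extra twist you are missing --- passes to the \emph{further} quotient $\I_{Y'\cap V/V}\iso\I_{Q/V}(-1)$ for a point $Q\in V$ with $Q\ne P$. An elementary computation of $\Hom$'s (the analogue of Lemma \ref{lem:nosurjectionO(-2)}) shows that any map $\OO_V(-2)\oplus\I_P(-1)\to\OO_V(-1)$ has image $\I_{L/V}(-1)$, $\I_{P/V}(-1)$ or $\OO_V(-1)$, never $\I_{Q/V}(-1)$ with $Q\ne P$, so the split sum is excluded. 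You are right that the ``obvious'' targets $\I_{Y'/W}$ by themselves may not discriminate; the fix is the additional intersection with $V$ and a judicious choice of the one-parameter family so that $Y'\cap V$ picks up a point away from $P$, not a change of target to $\Omega^1_V$. As written, your proof of (i) has a genuine gap, and (ii) depends on (i).
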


For $\xi\in F$ lying over $(P,V)$ we have
$\I_{\mathcal Y_\xi/ \mathcal W_\xi} \iso \OO_V(-2)$ and
$\I_{\mathcal W_\xi} \iso \I_P(-1)$. Thus Lemma \ref{lem:opposite-ses}
yields a short exact sequence
\begin{equation*}
   0 \to \OO_V(-2) \to \I_{\mathcal Y \cup \mathcal W}\tensor k(\xi)
   \to \I_P(-1) \to 0
\end{equation*}
and we show that it is nonsplit by exhibiting a certain quotient sheaf
of $\I_{\mathcal Y \cup \mathcal W}\tensor k(\xi)$. This time we use
$\I_{Q/V}(-1)$ where $Q\in V$ is a point distinct from $P$.

\begin{lem}\label{lem:nosurjectionIq/v}
Let $V\subset \PP^3$ be a plane and $P, Q\in V$ two distinct points.
There is no surjection from  $\OO_V(-2)\oplus \I_P(-1)$ to
$\I_{Q/V}(-1)$.
\end{lem}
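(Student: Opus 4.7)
The plan is to argue by contradiction: suppose there is a surjection $\phi = (\phi_1,\phi_2)\colon \OO_V(-2)\oplus \I_P(-1) \onto \I_{Q/V}(-1)$ and analyze each component separately. The desired contradiction will come from showing $\phi_2$ is forced to be zero while $\phi_1$ alone cannot hit all of $\I_{Q/V}(-1)$.

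First I would handle $\phi_1$. Since $\Hom(\OO_V(-2), \I_{Q/V}(-1)) \iso H^0(V, \I_{Q/V}(1))$ is the two-dimensional space of linear forms on $V\iso \PP^2$ vanishing at $Q$, a nonzero $\phi_1$ corresponds to such a form $\ell$ and has image $\I_{L/V}(-1)$, where $L\subset V$ is the line $\{\ell=0\}$ through $Q$. In either case $\operatorname{im}(\phi_1)\subsetneq \I_{Q/V}(-1)$, so surjectivity of $\phi$ would force $\phi_2\neq 0$.

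The heart of the argument is to show $\phi_2=0$. Since $\I_{Q/V}(-1)$ is annihilated by the linear form $f_V$ defining $V$, the map $\phi_2$ factors as an $\OO_V$-linear map out of $\I_P(-1)\tensor_{\OO_{\PP^3}}\OO_V$. Using that $P\in V$ yields a short exact sequence $0\to \OO_{\PP^3}(-1)\to \I_P\to \I_{P/V}\to 0$, a straightforward Tor computation (the only nonvanishing contribution comes from $\I_{P/V}$, on which $f_V$ acts as zero) gives
\begin{equation*}
   0 \to k(P) \to \I_P(-1)\tensor\OO_V \to \I_{P/V}(-1) \to 0,
\end{equation*}
with $k(P)$ exactly the torsion subsheaf of $\I_P(-1)\tensor\OO_V$. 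Since $\I_{Q/V}(-1)$ is torsion-free on $V$, the induced map kills $k(P)$ and factors through a map $\I_{P/V}(-1)\to \I_{Q/V}(-1)$.

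It then remains to verify that $\Hom_V(\I_{P/V}, \I_{Q/V})=0$ when $P\neq Q$. Any such homomorphism extends to the reflexive hulls, both of which are $\OO_V$, so is multiplication by a scalar $\lambda$; if $\lambda\neq 0$ we would have $\I_{P/V}\subset \I_{Q/V}$, which is contradicted by any linear form on $V$ vanishing at $P$ but not at $Q$. Hence $\phi_2=0$, contradicting surjectivity of $\phi$. I expect the only real technical point to be the Tor computation identifying $k(P)$ as the torsion part of $\I_P(-1)\tensor\OO_V$; the rest is a routine reflexive-hull argument.
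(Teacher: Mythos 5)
Your argument is correct. The overall strategy is the same as the paper's --- treat the two summands separately and show that the $\I_P(-1)$ component can never contribute what is needed at $Q$ --- but the key computation is carried out differently. The paper works inside the ambient sheaf $\OO_V(-1)\supset \I_{Q/V}(-1)$: it observes that every nonzero map $\OO_V(-2)\to\OO_V(-1)$ is multiplication by a linear form (image $\I_{L/V}(-1)$) and that $\Hom(\I_P(-1),\OO_V(-1))$ is one-dimensional, generated by restriction to $V$ (image $\I_{P/V}(-1)$), so the image of any map from the direct sum is one of $\I_{L/V}(-1)$, $\I_{P/V}(-1)$ or their sum, and none of these equals $\I_{Q/V}(-1)$ for $Q\neq P$. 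You instead prove the sharper vanishing $\Hom(\I_P(-1),\I_{Q/V}(-1))=0$ head-on, via adjunction to $V$, the identification of $k(P)$ as the torsion of $\I_P(-1)\tensor\OO_V$ (your Tor computation is right: $\TTor_1(k(P),\OO_V)=k(P)$ because the equation of $V$ vanishes at $P$), and the reflexive-hull argument for $\Hom_V(\I_{P/V},\I_{Q/V})=0$. Your route is longer but every step checks out; the paper's shortcut amounts to noting that $\Hom(\I_P,\OO_V)\iso\Hom(\OO_{\PP^3},\OO_V)=k$ because $\Hom(k(P),\OO_V)=\Ext^1(k(P),\OO_V)=0$ by a depth count, which packages your restriction-to-$V$ analysis into one line. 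Either way the conclusion is the same, and your observation that the $\OO_V(-2)$ summand alone has image $\I_{L/V}(-1)\subsetneq\I_{Q/V}(-1)$ correctly finishes the proof.
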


\begin{proof}
Every nonzero homomorphism
\begin{equation*}
   \OO_V(-2) \to \OO_V(-1)
\end{equation*}
has image of the form $\I_{L/V}(-1)$ where $L\subset V$ is a line,
whereas every nonzero homomorphism
\begin{equation*}
   \I_P(-1) \to \OO_V(-1)
\end{equation*}
has image $\I_{P/V}(-1)$. Thus any nonzero homomorphism from the direct
sum of these two sheaves has image $\I(-1)$ where $\I\subset \OO_V$ is
one of $\I_{L/V}$, $\I_{P/V}$ or their sum
\begin{equation*}
   \I_{L/V} + \I_{P/V} =
   \begin{cases}
   \I_{P/V} & \text{if $P\in L$}\\
   \OO_V & \text{otherwise.}
   \end{cases}
\end{equation*}
Thus the image is never $\I_{Q/V}(-1)$ for $Q\ne P$.
\end{proof}

\begin{proof}[Proof of Theorem \ref{thm:modulichamberIII}]
The proof for Theorem \ref{thm:modulichamberII} carries over; we only
need to detail the construction of quotient sheaves via one parameter
families. As before we write down families over $\Aff^1=\Spec k[t]$ and
then restrict to $T = \Spec k[t]/(t^2)$. We then apply Lemma
\ref{lem:quotientexists1}, with $\mathcal W$ in the role of the family
denoted $\mathcal V$ in the Lemma. The outcome of Lemma
\ref{lem:quotientexists1} will be a quotient sheaf of the form
$\I_{Y'/W}$, where $W = \mathcal W_\xi$ is a plane $V$ with an embedded
point at $P$. We end by intersecting with $V$ to produce a further
quotient of the form $\I_{Y'\cap V/V}$. We shall choose one parameter
families such that the latter is isomorphic to $\I_{Q/V}(-1)$ with
$Q\ne P$.

\emph{Distinct lines.} Let $C = L \cup L_0$ be a pair of distinct lines
inside $V$ intersecting at $P$. Choose another plane $V'$ containing
$L_0$ and a point $Q\in L_0$ distinct from $P$. The pencil of lines
$L_t \subset V'$ through $Q$ yields a one parameter family
\begin{equation*}
   Z_t = L \cup L_t
\end{equation*}
of disjoint pairs of lines for $t\ne 0$, with flat limit $Z_0\subset W$
being $C$ with a spatial embedded point at $P$.

In suitable affine coordinates $\Aff^3$ let $V$ be $V(z)$, let $P$ be
the origin and let $C=V(z,xy)$. Then $W = V(xz,yz,z^2)$. Furthermore let
$Q=(0,1,0)$ and $L_t = V(x,z-t(y-1))$. This leads to the family $Z$
defined by the ideal
\begin{equation*}
   (y,z) \cap (x,z-t(y-1))=(xy,xz,(z-t(y-1))y, (z-t(y-1))z)
\end{equation*}
and the intersection with $W\times T$ is given by
\begin{equation*}
   I = (xz,yz,z^2,xy,t y(y-1),t z)
\end{equation*}
Thus $(I:t)/(t) = (z,xy,y(y-1))$, which defines the union of the
$x$-axis and the point $Q$. This is $Y'\subset W$ and thus
$\I_{Y'\cap V/V} = \I_{Y'/V}$ is isomorphic to $\I_{Q/V}(-1)$.

\emph{Double lines.} Let $C\subset V$ be a double line inside the plane
$V$ with $P\in C$. We shall define an explicit one parameter family with
central fiber $Y_0\subset W$ being $C$ with a spatial embedded point at
$P$.

Geometrically, the family is this: let $L\subset V$ be the supporting
line of $C$. Consider a line $M\subset V$ not through $P$ and let $Q$ be
its intersection point with $L$. Also let $M'$ be a line through $P$ and
not contained in $V$. Let $R_t$ be a point on $M$ moving towards $Q$ as
$t\to 0$, and let $R'_t$ be a point on $M'$ moving towards $P$, but much
faster than $R_t$ moves (quadratic versus linearly). Then let $L_t$ be
the line through $R_t$ and $R'_t$ and let $Y_t = L\cup L_t$ for
$t\ne 0$.

Let $P$ be the origin in suitable affine coordinates $\Aff^3$, let $V$
be the $xy$-plane $V(z)$ and let $C\subset V$ be the double $x$-axis
$V(y^2,z)$. Thus $Y_0$ corresponds to
\begin{equation*}
   (z,y^2)\cap (x,y,z)^2 = (xz,yz,z^2,y^2).
\end{equation*}
Now let $L=V(y,z)$, let $L_t$ be the line through $(1,t,0)$ and
$(0,0,t^2)$, that is
\begin{equation*}
   L_t = V(tx-y, ty+z-t^2)
\end{equation*}
and take $Y_t = L\cup L_t$ for $t\ne 0$. This yields the family (the
following identity requires a bit of fiddling)
\begin{equation*}
   (y,z) \cap (tx-y,ty+z-t^2)
   = ((tx-y)y, (tx-y)z, (ty+z-t^2)z, xz + ty(x-1)).
\end{equation*}
Reducing this modulo $t$ gives the original $Y_0$. The intersection with
$W\times T$ gives
\begin{equation*}
   I = (xz,yz,z^2,(t x-y)y, t y(x-1))
\end{equation*}
and so $Y'\subset W$ is defined by
\begin{align*}
   (I:t)/(t) &= (xz,yz,z^2,y^2,y(x-1))\\
   &= (y,z) \cap (x-1,y^2,z) \cap (x,y,z^2).
\end{align*}
This is the line $L$ with an embedded point at $Q$ (inside $V$) and
another embedded point along the $z$-axis at $P$. Intersecting with $V$
removes the embedded point at $P$, leaving the line $L$ with an embedded
point at $Q$. Thus $\I_{Y'\cap V/V} \iso \I_{Q/V}(-1)$.

This establishes part (i) precisely as in the proof of Theorem
\ref{thm:modulichamberII} and part (ii) then follows by smoothness of
$\mathcal{M}^{\mathrm{III}}$ (from Proposition \ref{pro:dimExtG}) and by
uniqueness of normal (here smooth) contractions.
\end{proof}


\section{The Mori cone of $\mathcal C$ and extremal contractions}\label{sec:mori}

In this final section we shall prove that $\mathcal C \to \mathcal C'$
is the contraction of a $K$-negative extremal ray in the Mori cone. It
follows that the contracted space $\mathcal C'$ is projective.

To set the stage we recall the basic mechanism of $K$-negative extremal
contractions. Let $X$ be a projective normal variety and $\alpha$ a
curve class (modulo numerical equivalence) which spans an extremal ray
in the Mori cone. If also the ray is $K$-negative, i.e.\ the
intersection number between $\alpha$ and the canonical divisor $K_X$ is
negative, then there exists a unique projective normal variety $Y$ and a
birational morphism $f\colon X\to Y$ which contracts precisely the
effective curves in the class $\alpha$.

\subsection{Statement}

We denote elements in $\mathcal C$ by the letter $Y$. It is the union of
a (possibly degenerate) conic denoted $C$ and a point denoted $P$. If
the point is embedded, $P\in C$ denotes its support. We also write
$V\subset \PP^3$ for the unique plane containing $C$.

Define four effective curve classes (modulo numerical equivalence) on
$\mathcal C$. Each is described as a family $\{Y_t\}$, and we use a
subscript $t$ to indicate a parameter on the piece that varies (all
choices are to be made general, e.g.\ $C$ nonsingular unless stated
otherwise, etc.):
\begin{enumerate}
   \item[$\delta$:] fix a conic $C$ and a point $P\in C$. Let $Y_t$ be
   $C$ with an embedded point at $P$, varying in the $\PP^1$ of normal
   directions to $C \subset \PP^3$ at $P$.
   \item[$\epsilon$:] fix a plane $V$, a conic $C\subset V$ and a line
   $L\subset V$. Let $P_t$ vary along $L$ and let
   $Y_t = C \cup \{P_t\}$.
   \item[$\zeta$:] fix a plane $V$, a pencil of conics $C_t \subset V$
   and a point $P\in V$. Let $Y_t = C_t \cup \{P\}$.
   \item[$\eta$:] fix a line $L$ and a point $P\in L$. Let $V_t$ be the
   pencil of planes containing $L$ and let $C_t$ be the planar double
   structure on $L$ inside $V_t$. Then let $Y_t$ be $C_t$ with an
   embedded spatial point at $P$.
\end{enumerate}
In $\epsilon$ there are implicitly two elements with an embedded point,
namely where $L$ intersects $C$. Similarly there is one element in
$\zeta$ with an embedded point, corresponding to the pencil member $C_t$
that contains $P$.

\begin{thm}\label{thm:mori-C}
The Mori cone of $\mathcal C$ is the cone over a solid tetrahedron, with
extremal rays spanned by the four curve classes $\delta, \epsilon,
\zeta, \eta$. Of these, the first three are $K$-negative, whereas $\eta$
is $K$-positive. The contraction corresponding to $\zeta$ is
$\mathcal C \to \mathcal C'$.
\end{thm}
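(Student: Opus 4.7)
The plan is to work in the Picard group of $\mathcal{C}$ coming from its blowup description, compute the intersection numbers of the four curve classes against a natural divisor basis, verify the $K$-signs by direct substitution, and show the Mori cone is the claimed simplicial cone by a dual-basis argument.

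\textbf{Setup.} From $b\colon \mathcal{C} \to \PP^3 \times \hilb^{2m+1}(\PP^3)$ with exceptional $E'$ and the $\PP^5$-bundle $\hilb^{2m+1}(\PP^3) = \PP(\E^\vee)$ over $\check\PP^3$ from Lemma \ref{lem:P5bundle}, one reads off $\Pic(\mathcal{C}) = \ZZ H_P \oplus \ZZ H_V \oplus \ZZ \xi \oplus \ZZ E'$, where $H_P$ and $H_V$ are the pullback hyperplane classes and $\xi = \OO_{\PP(\E^\vee)}(1)$. The Koszul resolution $0 \to \OO(-1)^4 \to \OO^{10} \to \E \to 0$ gives $c_1(\E) = 4 H_V$, and the standard formulas for projective bundles and blowups then yield $K_\mathcal{C} = -4 H_P - 8 H_V - 6\xi + E'$.

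\textbf{Intersection matrix and $K$-signs.} A direct geometric computation produces the $4 \times 4$ intersection matrix with rows $(H_P, H_V, \xi, E')$ and columns $(\delta, \epsilon, \zeta, \eta)$:
\[
M = \begin{pmatrix} 0 & 1 & 0 & 0 \\ 0 & 0 & 0 & 1 \\ 0 & 0 & 1 & -2 \\ -1 & 2 & 1 & 1 \end{pmatrix}.
\]
All entries are elementary incidence counts except $\xi \cdot \eta = -2$ and $E' \cdot \eta = 1$. For the first, identify $\eta$ with a section of $\PP(\E^\vee)|_\ell$ over the pencil $\ell \subset \check\PP^3$ of planes through $L$, determined by the sub-line-bundle $\mathcal{L} \subset \E|_\ell$ generated by the squares $\ell_V^2$ of the defining forms of $L$ inside each $V \in \ell$; pushforward along the universal plane yields $\E|_\ell \cong \OO^3 \oplus \OO(1)^2 \oplus \OO(2)$ with $\mathcal{L}$ identified with the $\OO(2)$ summand, hence $\xi \cdot \eta = -\deg \mathcal{L} = -2$. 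For the second, use $E' = b^*\bar E - E$ from Proposition \ref{prop:negative-one}, observe $\eta \cap E = \emptyset$ because spatial embedded points are never planar, and note $\bar E \cdot b(\eta) = 1$ since $b(\eta)$ has class $H_V - 2\xi$ and $\bar E = H_P + H_V$. Then $\det M = -1$, so the four classes form a basis of $N_1(\mathcal{C})_\RR$, and substitution into the formula for $K_\mathcal{C}$ gives $(K \cdot \delta, K \cdot \epsilon, K \cdot \zeta, K \cdot \eta) = (-1, -2, -5, +5)$, confirming the claimed $K$-signs.

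\textbf{Mori cone.} Inverting $M$, any class $\alpha = a\delta + b\epsilon + c\zeta + d\eta$ satisfies $b = H_P \cdot \alpha$, $d = H_V \cdot \alpha$, $c = (\xi + 2 H_V) \cdot \alpha$ and $a = (2 H_P + 3 H_V + \xi - E') \cdot \alpha$. The classes $H_P$ and $H_V$ are pullbacks of ample divisors and so nef; the Koszul splitting in the previous paragraph bounds the maximum degree of a sub-line-bundle of $\E|_\ell$ by $2$ for every line $\ell \subset \check\PP^3$, so $\xi + 2 H_V$ pairs non-negatively with the two extremal rays of the Mori cone of $\hilb^{2m+1}(\PP^3)$ (fiber and minimal section) and is therefore nef. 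Together with nefness of the fourth divisor $D_\delta = 2 H_P + 3 H_V + \xi - E'$ these formulas force $a, b, c, d \geq 0$ on every effective class, yielding the simplicial description of the Mori cone. Identifying $b$ as the contraction of $\delta$ and $\phi\colon \mathcal{C} \to \mathcal{C}'$ from Corollary \ref{cor:Artincontraction} as the contraction of $\zeta$, combined with $K \cdot \zeta = -5 < 0$, then realizes $\phi$ as the $K$-negative extremal contraction associated to $\zeta$, as claimed. The main obstacle is nefness of $D_\delta$: for curves $C \not\subset E'$ this reduces to the bound $(2 H_P + 3 H_V + \xi) \cdot b(C) \geq E' \cdot C$, which combines ampleness of $2 H_P + 3 H_V + \xi$ on $\PP^3 \times \hilb^{2m+1}(\PP^3)$ (checked on the three extremal rays of its Mori cone) with a Seshadri-type bound for $\mathcal{Z}$, and for curves $C \subset E'$ it becomes a quantitative slope bound for $N_{\mathcal{Z}/\PP^3 \times \hilb^{2m+1}(\PP^3)}$ restricted to arbitrary curves in $\mathcal{Z}$, extending to the normal bundle of the universal conic the argument already used for $\E|_\ell$.
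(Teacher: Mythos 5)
Your numerical skeleton is correct and in fact coincides with the paper's. The intersection matrix agrees with what Lemmas \ref{lem:dualbasis} and \ref{lem:nefmori} encode (in particular $A\cdot\eta=-2$ and $E'\cdot\eta=1$ are right, and your splitting $\E|_\ell\iso\OO^{\oplus 3}\oplus\OO(1)^{\oplus 2}\oplus\OO(2)$ with the double-line family saturating to the $\OO(2)$ summand is a clean direct route to $A\cdot\eta=-2$); the values $(K\cdot\delta,K\cdot\epsilon,K\cdot\zeta,K\cdot\eta)=(-1,-2,-5,5)$ match; and your four dual divisors $H$, $H'$, $\xi+2H'$, $2H+3H'+\xi-E'$ are literally the paper's $H$, $H'$, $D$, $D'+H'$ from Lemma \ref{lem:div-relations}. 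The Koszul sequence $0\to\OO(-1)^{\oplus 4}\to\OO^{\oplus 10}\to\E\to 0$ is also a slicker way to get $c_1(\E)=4H'$ than the Grothendieck--Riemann--Roch computation in Lemma \ref{lem:KC}.

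The gap is in the nefness verifications, which are the substance of the theorem. For $\xi+2H'$ you reduce to checking ``the two extremal rays: fiber and minimal section,'' but the claim that the non-fiber extremal ray of the $\PP^5$-bundle is represented by a section over a \emph{line} is essentially equivalent to the nef statement you are trying to prove: bounding the splitting type of $\E$ on lines does not formally bound sub-line-bundles of $\E$ pulled back to arbitrary curves, so as written the reduction is circular. This is easily repaired --- $\xi+2H'$ is the class of the divisor $D$ of conics meeting a fixed line, which is base point free and hence nef, as in the paper. The serious gap is $D_\delta=2H+3H'+\xi-E'$: you state a program (ampleness of $2H+3H'+\xi$, a ``Seshadri-type bound'' along $\mathcal Z$ for curves not in $E'$, and a ``quantitative slope bound'' for $N_{\mathcal Z}$ for curves in $E'$) but carry out none of it, and it is not evident that the constants in such bounds would come out right; this is the hardest step of the whole theorem and cannot be left as a sketch. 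The paper avoids all of this by a geometric argument (Lemmas \ref{lem:D'-embedded} and \ref{lem:nef}): it pins the base locus of $D'$ inside the locus $B$ of degenerate conics with a spatial embedded point, observes $B\cap E=\emptyset$, and uses the relation $D'+H'=H+D+E$ to handle curves inside $B$. You would need either to execute your Seshadri/normal-bundle program in full or to substitute an argument of this kind before the Mori cone computation is complete.
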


\begin{cor}
$\mathcal C'$ is projective.
\end{cor}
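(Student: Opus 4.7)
The plan is to work in $N^1(\mathcal{C})_\RR$ and $N_1(\mathcal{C})_\RR$ using the blow-up tower $b\colon \mathcal{C}\to\PP^3\times\hilb^{2m+1}(\PP^3)$ along the universal conic $\mathcal{Z}$, combined with the $\PP^5$-bundle structure $\hilb^{2m+1}(\PP^3)=\PP(\E^\vee)$ over $\check\PP^3$ from Lemma~\ref{lem:P5bundle}. This gives $\rho(\mathcal{C})=4$ with natural basis $\{H_1,H_2,\xi,E'\}$: hyperplane pullbacks from $\PP^3$ and $\check\PP^3$, the relative hyperplane $\xi$ of the $\PP^5$-bundle, and the exceptional divisor $E'$ of $b$.

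First I would compute the $4\times 4$ intersection matrix of this basis against the four curve classes. Most entries are direct geometric counts ($H_1\cdot\epsilon=1$ since $P_t$ traces a line; $E'\cdot\delta=-1$ as $\delta$ is a $\PP^1$-fiber of $E'\to\mathcal{Z}$; $E'\cdot\epsilon=2$ from the two points of $L\cap C$; $E'\cdot\zeta=1$ from the unique pencil member through $P$). The entries for $\eta\subset E'$ are the subtlest: using the presentation
\[
0\to V^*\tensor\OO(-1)\to\sym^2 V^*\tensor\OO\to\E\to 0
\]
on $\check\PP^3$ (yielding $c_1(\E)=4H_2$) and identifying $\eta$ as the section $V_t\mapsto[\ell^2|_{V_t}]$ of the restricted $\PP^5$-bundle over the pencil of planes through $L$, one computes $\xi\cdot\eta=-1$ and $H_2\cdot\eta=1$; the entry $E'\cdot\eta=1$ follows from the splitting $N_{\mathcal{Z}/\PP^3\times\hilb^{2m+1}(\PP^3)}|_\gamma\iso\OO(H_1+H_2)\oplus\OO(2H_1+\xi)$ restricted to $\gamma=b(\eta)$, identifying the spatial-direction sub-line-bundle. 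Nondegeneracy of the matrix shows $\{\delta,\epsilon,\zeta,\eta\}$ is a basis of $N_1(\mathcal{C})_\RR$.

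To show the Mori cone equals $\sum_j\RR_{\geq 0}\alpha_j$ I would exhibit four nef divisor classes $N_i$ with $N_i\cdot\alpha_j=0$ iff $i\neq j$, so that by duality $\overline{NE}(\mathcal{C})\subseteq\sum_j\RR_{\geq 0}\alpha_j$. From the intersection matrix these are (up to scale) $N_2=H_1$, $N_4=H_2$, $N_3=\xi+H_2$, and $N_1=H_1+H_2+\xi+E$ where $E=H_1+H_2-E'$ is the class of the planar divisor. The classes $H_1,H_2$ are nef as pullbacks of ample classes. For $N_3$ I would check semi-ampleness of $\OO_{\PP(\E^\vee)}(1)\tensor\pi^*\OO(1)$ via global generation of $\E^\vee\tensor\OO(1)$ on $\check\PP^3$ using the dual Koszul presentation. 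For $N_1$ I would realize it as a semi-ample class by combining the morphism $b$ with the $\PP^5$-bundle structure $E\to I$ from Proposition~\ref{prop:negative-one}.

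Finally, computing $K_{\mathcal{C}}$ via the blow-up formula $K_{\mathcal{C}}=b^*K_{\PP^3\times\hilb^{2m+1}(\PP^3)}+E'$ and the $\PP^5$-bundle formula for $K_{\hilb^{2m+1}(\PP^3)}$ (together with $c_1(\E)=4H_2$ and $K_{\check\PP^3}=-4H_2$), one verifies from the intersection matrix that $K_{\mathcal{C}}\cdot\delta,K_{\mathcal{C}}\cdot\epsilon,K_{\mathcal{C}}\cdot\zeta<0$ while $K_{\mathcal{C}}\cdot\eta>0$. The $\zeta$-contraction is identified with $\phi\colon\mathcal{C}\to\mathcal{C}'$ of Corollary~\ref{cor:Artincontraction}: $\phi$ contracts the $\PP^5$-fibers of $E\to I$, every curve in such a fiber is numerically proportional to a line, which is $\zeta$, and uniqueness of the Mori contraction of a $K$-negative extremal ray identifies $\phi$ with this contraction. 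The corollary that $\mathcal{C}'$ is projective is immediate. The main obstacle is verifying nefness of the non-obvious dual classes $N_1$ and $N_3$: one must either exhibit explicit morphisms to projective varieties whose pullbacks realize them, or verify global generation/semi-ampleness directly from the bundle presentations.
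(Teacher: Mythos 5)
Your overall strategy is the same as the paper's: take the basis $(H,H',A,E')$ of $\Pic(\mathcal C)$ (your $H_1,H_2,\xi,E'$), intersect against the four curve classes, exhibit a dual basis of nef divisors to determine the Mori cone, compute the sign of $K_{\mathcal C}$ on each ray, and identify the $\zeta$-contraction with $\phi\colon\mathcal C\to\mathcal C'$, whence projectivity of $\mathcal C'$. But one of your intersection numbers is wrong, and it is exactly the one on which the hard part of the argument hinges. With $\xi=A=c_1(\OO_{\PP(\E^\vee)}(1))$ for the $\E$ in your presentation (the one with $c_1(\E)=4H'$), one has $A\cdot\eta=-2$, not $-1$: over the pencil $\check L$ of planes through $L$, the section $V\mapsto[\ell_V^2]$ corresponds to the line subbundle of $\res{\E}{\check L}$ which is the image of the trivial bundle $\sym^2\langle x_0,x_1\rangle\otimes\OO$ with kernel $\OO_{\PP^1}(-1)^{\oplus 2}$, hence equals $\OO_{\PP^1}(2)=s^*\OO_{\PP(\E^\vee)}(-1)$. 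A cross-check: the base-point-free divisor $D$ of conics meeting a fixed general line satisfies $D=2H'+A$ and $D\cdot\eta=0$ (all conics in the family $\eta$ are supported on $L$), while $H'\cdot\eta=1$, forcing $A\cdot\eta=-2$.

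With the correct value, both of your candidates $N_3=\xi+H_2=A+H'$ and $N_1=H_1+H_2+\xi+E=2H+2H'+A-E'=D'$ satisfy $N_i\cdot\eta=-1<0$, so neither is nef and neither is dual to the intended curve class; your determination of $\overline{NE}(\mathcal C)$ collapses at this point. The correct duals are $N_3=A+2H'=D$ (base-point-free, so no semi-ampleness computation with $\E^\vee\tensor\OO(1)$ is needed) and $N_1=D'+H'$. The paper remarks explicitly that $D'\cdot\eta=-1$ is what forces the correction by $H'$, and the nefness of $D'+H'$ is the genuinely delicate step of the whole proof (Lemma \ref{lem:nef}): one shows via Lemma \ref{lem:D'-embedded} that the base locus of $D'$ lies in the locus $B$ of degenerate conics with a spatial embedded point, and then handles curves inside $B$ using the relation $D'+H'=H+D+E$ together with the disjointness of $B$ from $E$. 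Your proposed route for $N_1$ (``combining $b$ with the $\PP^5$-bundle structure $E\to I$'') does not supply this, and you yourself flag it as the unresolved obstacle. The remaining ingredients --- the canonical class computation, the identification of the $\zeta$-contraction with $\phi$, and the deduction that $\mathcal C'$ is projective --- are fine once the nef dual basis is repaired.
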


\begin{rem}
The last claim in the theorem is clear: by contracting $\zeta$ we forget
the conic part of $Y\subset V$, keeping only $V$ and the point $P\in V$.
By uniqueness of (normal) contractions the contracted variety is
$\mathcal C'$. Also, with reference to Diagram \ref{eq:Ccontr} (from
Section \ref{sec:modulispace:contraction}), the contraction of $\delta$
is the blowing down $b$. The theorem furthermore reveals a third
$K$-negative extremal ray spanned by $\epsilon$. The corresponding
contraction has the effect of forgetting the point part of $Y\subset V$,
keeping only the conic; thus the contracted locus in $\mathcal C$ is the
same as for $\zeta$, but the contraction happens in a ``different
direction''. We do not know if the contracted space has an
interpretation as a moduli space for Bridgeland stable objects.
\end{rem}

\subsection{The canonical divisor}

Use notation as in Diagram \ref{eq:Ccontr} and Lemma \ref{lem:P5bundle}.
We read off that the Picard group of $\mathcal C$ has rank $4$ and is
generated by the pullbacks of the following divisor classes:
\begin{align*}
   &H \subset \PP^3 & &\text{a plane,} \\
   &H' \subset \check \PP^3 & &\text{a plane in the dual space,} \\
   &A = c_1(\OO_{\PP(\E^\vee)}(1)), \\
   &E' \subset \mathcal C & &\text{the exceptional divisor for the blowup
   $b$.}
\end{align*}
Moreover numerical and linear equivalence of divisors coincide on
$\mathcal C$. Here we only use that the Picard group of a projective
bundle over some variety $X$ is $\Pic(X)\oplus \ZZ$, with the added
summand generated by $\OO(1)$, and the Picard group of a blowup of $X$
is $\Pic(X)\oplus \ZZ$, with the added summand generated by the
exceptional divisor.

As long as confusion seems unlikely to occur we will continue to use the
symbols $H$, $H'$ and $A$ for their pullbacks to $\mathcal C$, or to an
intermediate variety such as $\PP^3\times\hilb^{2m+2}(\PP^3)$ in Diagram
\ref{eq:Ccontr}.

\begin{lem}\label{lem:KC}
The canonical divisor class of $\mathcal C$ is
\begin{equation*}
   K_{\mathcal C} = -4H - 8H' - 6A + E'.
\end{equation*}
\end{lem}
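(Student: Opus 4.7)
The plan is to combine three standard formulae for the canonical bundle: the blowup formula for $b$, the product formula on $\PP^3 \times \hilb^{2m+1}(\PP^3)$, and the relative canonical bundle formula for the projective bundle $\pi$. From the excerpt, $b$ is the blowup along the smooth universal conic $\mathcal{Z}$, which has codimension $2$ in $\PP^3\times\hilb^{2m+1}(\PP^3)$ (fibers of $\mathcal Z$ over $\hilb^{2m+1}(\PP^3)$ are one-dimensional, while the total space has dimension $11$). The blowup formula therefore gives
\begin{equation*}
   K_{\mathcal C} = b^*\!\left(K_{\PP^3\times\hilb^{2m+1}(\PP^3)}\right) + (2-1)\,E' = -4H + \pr_2^*K_{\hilb^{2m+1}(\PP^3)} + E',
\end{equation*}
where the product formula has been used together with $K_{\PP^3} = -4H$.

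Next, by Lemma \ref{lem:P5bundle}, $\pi\colon \hilb^{2m+1}(\PP^3) \iso \PP(\E^\vee) \to \check\PP^3$, and the Euler sequence for a projective bundle gives
\begin{equation*}
   K_{\PP(\E^\vee)} = \pi^*K_{\check\PP^3} + \pi^*c_1(\E^\vee) - 6A = -4H' - c_1(\E) - 6A,
\end{equation*}
since $\E^\vee$ has rank $6$ and $K_{\check\PP^3} = -4H'$.

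It remains to compute $c_1(\E)$; this is the step where one must do actual work, though the calculation is elementary. The incidence variety $I\subset\PP^3\times\check\PP^3$ is a divisor of bidegree $(1,1)$, so tensoring the ideal sheaf sequence of $I$ with $\pr_1^*\OO_{\PP^3}(2)$ yields
\begin{equation*}
   0 \to \OO(1,-1) \to \OO(2,0) \to \res{\OO(2,0)}{I} \to 0
\end{equation*}
on $\PP^3\times\check\PP^3$. Pushing forward along $\pr_2$ (the higher direct images of both $\OO(1,-1)$ and $\OO(2,0)$ vanish) produces a resolution
\begin{equation*}
   0 \to \OO_{\check\PP^3}(-1)^{\oplus 4} \to \OO_{\check\PP^3}^{\oplus 10} \to \E \to 0,
\end{equation*}
from which $c_1(\E) = 4H'$ is read off immediately.

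Assembling the pieces,
\begin{equation*}
   K_{\mathcal C} = -4H + \left(-4H' - 4H' - 6A\right) + E' = -4H - 8H' - 6A + E',
\end{equation*}
which is the claim. The only nonformal ingredient is the computation of $c_1(\E)$; everything else is bookkeeping with the standard blowup and projective bundle formulae.
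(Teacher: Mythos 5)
Your proof is correct, and its overall skeleton — blowup formula for $b$ with the codimension-two center $\mathcal Z$, the product formula, and the relative Euler sequence for $\pi\colon\PP(\E^\vee)\to\check\PP^3$ — is exactly the paper's. The one place you genuinely diverge is the computation of $c_1(\E)$. The paper does this by "brute force" via Grothendieck--Riemann--Roch: it expands $\td(\PP^3)$ and $\ch(\res{\pr_1^*\OO_{\PP^3}(2)}{I})$, multiplies, and extracts the coefficient of $H^3H'$. You instead push forward the twisted ideal-sheaf sequence of the $(1,1)$-divisor $I$ along $\pr_2$; since $R^1\pr_{2*}$ of $\pr_1^*\OO(1)\otimes\pr_2^*\OO(-1)$ vanishes (it is $H^1(\PP^3,\OO(1))\otimes\OO_{\check\PP^3}(-1)=0$), this yields the global resolution
\begin{equation*}
   0 \to \OO_{\check\PP^3}(-1)^{\oplus 4} \to \OO_{\check\PP^3}^{\oplus 10} \to \E \to 0,
\end{equation*}
from which $c_1(\E)=4H'$ and, as a bonus, $\rk\E=6$ (re-proving part of Lemma \ref{lem:P5bundle}) are immediate. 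Your route is more elementary and arguably cleaner for this particular sheaf, since it avoids any Chern character or Todd class expansion; the paper's GRR computation is more mechanical and would generalize without change to pushforwards where no such explicit resolution is available. Both give the same answer, and the final assembly $K_{\mathcal C}=-4H-8H'-6A+E'$ is correct.
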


\begin{proof}
This is a standard computation. First, for the blowup $b$, with center
of codimension two, we have
\begin{equation*}
   K_{\mathcal C} = b^*K_{\PP^3 \times \hilb^{2m+1}(\PP^3)} + E'
\end{equation*}
and for the product
\begin{equation*}
   K_{\PP^3 \times \hilb^{2m+1}(\PP^3)} = \pr_1^*K_{\PP^3} +
   \pr_2^*K_{\hilb^{2m+1}(\PP^3)}.
\end{equation*}
Now $K_{\PP^3} = -4H$ and for the projective bundle
$\hilb^{2m+1}(\PP^3) \iso \PP(\E^\vee)$ we have
\begin{equation*}
   K_{\PP(\E^\vee)} = \pi^*K_{\check\PP^3} + c_1(\Omega^1_\pi).
\end{equation*}
Again $K_{\check\PP^3} = -4H'$ and the short exact sequence
\begin{equation*}
   0 \to \Omega^1_\pi \to \pi^*(\E^\vee) \tensor \OO_{\PP(\E^\vee)}(-1)
   \to \OO_{\PP(\E^\vee)} \to 0
\end{equation*}
gives
\begin{align*}
   c_1(\Omega^1_\pi)
   &= c_1(\pi^*(\E^\vee) \tensor \OO_{\PP(\E^\vee)}(-1)) \\
   &= \pi^*c_1(\E^\vee) + 6c_1(\OO_{\E^\vee}(-1) \\
   &= -\pi^*c_1(\E) - 6A.
\end{align*}
Putting this together, the stated expression for $K_{\mathcal C}$
follows once we have established that $c_1(\E) = 4H'$.

Recall that $\E = \pr_{2*}(\res{\pr_1^*\OO_{\PP^3}(2)}{I})$. We compute
its first Chern class by brute force: apply Grothendieck--Riemann--Roch
to $\pr_2\colon \PP^3\times\check\PP^3 \to \check\PP^3$. Note that all
higher direct images vanish, since $H^p(V,\OO_V(2)) = 0$ for all
$V\in \check\PP^3$ and $p>0$. Thus by Grothendieck--Riemann--Roch the
class
\begin{equation*}
   c_1(\pr_{2*}(\res{\pr_1^*\OO_{\PP^3}(2)}{I}))
\end{equation*}
is the push forward in the sense of the Chow ring of the degree $4$
homogeneous part of
\begin{equation*}
   \ch(\res{\pr_1^*\OO_{\PP^3}(2)}{I}) \pr_1^*(\td(\PP^3)).
\end{equation*}
We have
\begin{equation}\label{eq:chE1}
   \td(\PP^3) =
   \left(\frac{H}{1-e^{-H}}\right)^4 = 1 + 2H + \tfrac{11}{6} H^2 + H^3.
\end{equation}
Moreover $I\subset \PP^3\times\check\PP^3$ is a divisor of bidegree
$(1,1)$, so there is a short exact sequence
\begin{equation*}
   0 \to \pr_1^*\OO_{\PP^3}(-1) \otimes \pr_2^*\OO_{\check\PP^3}(-1)
   \to \OO_{\PP^3\times\check\PP^3} \to \OO_I \to 0
\end{equation*}
from which we see (suppressing the explicit pullbacks $\pr_i^*$ of
cycles in the notation)
\begin{equation}\label{eq:chE2}
   \ch(\res{\pr_1^*\OO_{\PP^3}(2)}{I}))
   = \exp(2H)(1-\exp(-H)\exp(-H')).
\end{equation}
Now multiply together \eqref{eq:chE1} and \eqref{eq:chE2} and observe
that the $H^3H'$-coefficient is $4$. Since the push forward $\pr_{2*}$
of any degree $4$ monomial $H^kH'^{4-k}$ equals $H'$ if $k=3$ and $0$
otherwise, this shows that $c_1(\E) = 4H'$.
\end{proof}

\subsection{Basis for $1$-cycles}

We will need a few more effective curves, as before written as families
$\{Y_t\}$:
\begin{enumerate}
   \item[$\alpha:$] fix a conic $C$ and a line $L$. Let the point $P_t$
   vary along $L$ and let $Y_t = C \cup \{P_t\}$.
   \item[$\beta:$] fix a quadric surface $Q\subset \PP^3$, a line $L$
   and a point $P$. Let $V_t$ run through the pencil of planes
   containing $L$ and let $C_t = Q\cap V_t$. Then take
   $Y_t = C_t \cup \{P\}$.
   \item[$\gamma:$] fix a plane $V$ and a point $P$. Let $C_t\subset V$
   run through a pencil of conics and let $Y_t = C_t \cup \{P\}$.
\end{enumerate}
As before all choices are general, so that in the definition of
$\alpha$, the line $L$ is disjoint from $C$, etc.

\begin{lem}\label{lem:dualbasis}
The dual basis to $(H, H', A, E')$ is $(\alpha,\beta,\gamma,-\delta)$.
\end{lem}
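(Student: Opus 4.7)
The plan is to compute each of the sixteen intersection numbers $D\cdot\tau$ for $D\in\{H,H',A,E'\}$ and $\tau\in\{\alpha,\beta,\gamma,-\delta\}$ and to check that the resulting $4\times 4$ matrix is the identity. Since $H$, $H'$ and $A$ are pullbacks from $\PP^3$, $\check\PP^3$ and $\hilb^{2m+1}(\PP^3)\iso\PP(\E^\vee)$ via the projections in Diagram \eqref{eq:Ccontr}, the projection formula reduces the first three rows to computing the image of $\tau$ in each of these three target spaces.

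The four curves are designed so that each moves non-trivially in at most one of the three factors: $\alpha$ moves only the point $P_t$ along a line $L\subset\PP^3$; $\beta$ moves only the plane $V_t$ in a pencil in $\check\PP^3$, with $C_t=Q\cap V_t$ then determined; $\gamma$ moves only the conic $C_t$ in a pencil inside a fixed plane $V$; and $\delta$ is contained in a single fiber of $b$. This immediately yields $H\cdot\alpha=1$ (a line in $\PP^3$), $H'\cdot\beta=1$ (a line in $\check\PP^3$), $A\cdot\gamma=1$ (the pencil maps isomorphically to a line in the fiber $\PP^5$ of $\pi$), and $H\cdot\delta=H'\cdot\delta=A\cdot\delta=0$, together with all the ``off-diagonal'' zeros for $\alpha$ and $\gamma$. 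For the last row, choosing parameters generically (with $L$ disjoint from $C$ in $\alpha$, $P\notin Q$ in $\beta$, and $P\notin V$ in $\gamma$) ensures that no member of these families carries an embedded point and that the corresponding curves in $\PP^3\times\hilb^{2m+1}(\PP^3)$ avoid the smooth codimension-two blowup center $\mathcal Z$, giving $E'\cdot\alpha=E'\cdot\beta=E'\cdot\gamma=0$. Finally, $\delta$ is a full $\PP^1$-fiber of the blowup $b$ over a point of $\mathcal Z$, so by the standard formula $\OO_{E'}(E')|_\delta\iso\OO_{\PP^1}(-1)$, giving $E'\cdot\delta=-1$ and hence $E'\cdot(-\delta)=1$.

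The one computation requiring real care is $A\cdot\beta=0$. Here $\beta$ lies entirely in $\pi^{-1}(\PP^1)\subset\PP(\E^\vee)$ for the pencil $\PP^1\subset\check\PP^3$ of planes through $L$, and defines a section $s\colon\PP^1\to\PP(\E^\vee)|_{\PP^1}$ of the restricted projective bundle. Such a section is given by a line subbundle $L_Q\hookrightarrow\E|_{\PP^1}$ with $s^*\OO_{\PP(\E^\vee)}(1)\iso L_Q^\vee$. The fixed quadric $Q$ yields a global section $\sigma_Q\colon\OO_{\check\PP^3}\to\E$ via the restriction map $H^0(\PP^3,\OO(2))\otimes\OO_{\check\PP^3}\onto\E$, and $\sigma_Q$ is nowhere vanishing on the pencil because a general quadric contains no plane $V_t$. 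Hence $L_Q=\OO_{\PP^1}$ and $A\cdot\beta=\deg L_Q^\vee=0$. This is the main obstacle: a naive coordinate parametrization of $\beta$ in an affine trivialization of $\PP(\E^\vee)$ over $\check\PP^3$ can appear to give a positive degree, reflecting only the fact that such a trivialization of $\E$ does not extend globally to the pencil $\PP^1$; the intrinsic argument via the subbundle $L_Q$ is what resolves the discrepancy.
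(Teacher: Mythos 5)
Your proposal is correct and follows essentially the same route as the paper: reduce the first three rows via the projection formula and the pushforwards of each curve, get the diagonal entries from the obvious degree computations, kill the off-diagonal entries by generic choices (including $E'\cdot\delta=-1$ from the exceptional divisor restricted to a blowup fibre), and handle the one delicate entry $A\cdot\beta=0$ by exhibiting the section of $\PP(\E^\vee)$ determined by the fixed quadric $Q$ as a trivial line subbundle of $\E$, so that $\OO_{\PP(\E^\vee)}(1)$ pulls back trivially. The only cosmetic difference is that the paper constructs this section over all of $\check\PP^3$ while you restrict to the pencil $\check L$ first; the argument is the same.
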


\begin{proof}
We need to compute all the intersection numbers and verify that we get
$0$ or $1$ as appropriate. Here it is sometimes useful to explicitly
write out the pullbacks to $\mathcal C$, e.g.\ writing $b^*(\pr_1^*(H))$
rather than $H$. We view $\alpha,\beta,\gamma,\delta$ not just as
equivalence classes, but as the effective curves defined above. Only the
intersection numbers involving $\beta$ require some real work, and we
will save this for last.

Intersections with $\alpha$: Since $\pr_{1*}(b_*(\alpha)))$ is the line
$L\subset\PP^3$ defining $\alpha$ we have $b^*(\pr_1^*(H))\cdot \alpha =
H\cdot L = 1$. Similarly $\pr_{2*}(b_*(\alpha))=0$ shows that the
intersections with $H'$ and $A$ vanish. Finally $\alpha$ has no elements
with embedded points, so is disjoint from $E'$.

Intersections with $\gamma$: We have $A\cdot\gamma=1$ because $\gamma$
is a line in a fiber of the projective bundle $\pi$, whereas $A$
restricts to a hyperplane in every fiber. The remaining intersection
numbers vanish as we can pick disjoint effective representatives.

Intersections with $\delta$: We have $E'\cdot\delta = -1$ as $\delta$ is
a fiber of the blowup $b$ and $E'$ is the exceptional divisor. The
remaining divisors $H$, $H'$ and $A$ are all pullbacks, i.e.\ of the
form $b^*(?)$ and then $b^*(?)\cdot \delta = (?)\cdot b_*(\delta) = 0$.

Intersections with $\beta$: We can choose $\beta$ to be disjoint from
$H$ and $E'$. Moreover $\pi_*(\pr_{1*}(b_*(\beta)))$ is the line $\check
L \subset\check\PP^3$ dual to the line $L$ defining $\beta$. This gives
$b^*(\pr_1^*(\pi^*(H')))\cdot \beta = H' \cdot \check L = 1$.

It remains to verify $A\cdot\beta=0$. The definition of $\beta$ can be
understood as follows: choose a general section
\begin{equation*}
   \OO_{\PP^3} \xrightarrow{\sigma} \OO_{\PP^3}(2)
\end{equation*}
and apply 
$\pr_{2*}(\res{\pr_1^*(-)}{I})$ to obtain a homomorphism
\begin{equation}\label{eq:sublinebundle}
   \OO_{\check \PP^3} \to \E
\end{equation}
whose fiber over $V\in\check\PP^3$ is exactly the restriction of
$\sigma$ to $V$. This is nowhere zero, so \eqref{eq:sublinebundle} is a
rank $1$ subbundle and it defines a section
\begin{equation*}
   s_Q\colon \check\PP^3\to \PP(\E^{\vee}) \iso \hilb^{2m+1}(\PP^3)
\end{equation*}
with $s_Q^*(\OO_{\PP(\E^\vee)}(-1)) \iso \OO_{\check\PP^3}$ or in terms
of divisors $s_Q^*(A) = 0$. If we let $Q\subset\PP^3$ be the quadric
defined by $\sigma$ then $s_Q(V) = Q \cap V$. Thus
$\pr_{2*}(b_*(\beta))) = s_{Q*}(\check L)$ where $\check L$ is the dual
to the line $L$ defining $\beta$. This gives
\begin{equation*}
   b^*(\pr_2^*(A)) \cdot \beta
   = A \cdot \pr_{2*}(b_*(\beta))
   = A \cdot s_{Q*}(\check L)
   = s_Q^*(A)\cdot \check L = 0.
\end{equation*}
\end{proof}

We also define the following three effective divisors, phrased as a
condition on $Y\in \mathcal C$:
\begin{enumerate}
   \item[$D:$] all $Y$ whose conic part $C$ intersects a
   fixed line $M\subset\PP^3$.
   \item[$D':$] all $Y$ such that the line through $P$ and
   a fixed point $P_0\in \PP^3$ intersects the conic part $C$.
   \item[$E:$] all planar $Y$ (as before).
\end{enumerate}
Since $D$ is defined by a condition on $C$ only, it is the preimage by
$\pr_2\circ b$ (see Diagram \ref{eq:Ccontr}) of the similarly defined
divisor in $\hilb^{2m+1}(\PP^3)$. Moreover $D'$ and $E$ are the
\emph{strict} transforms by $b$ of the similarly defined divisors on
$\PP^3\times\hilb^{2m+1}(\PP^3)$.

We will need to control elements of $D'$ with an embedded point.

\begin{lem}\label{lem:D'-embedded}
Fix $P_0$ so that $D'$ is defined as an effective divisor. Choose a
plane $V$ not containing $P_0$, a possibly degenerate conic $C\subset V$
and a point $P\in C$. Then there is a unique $Y\in D'$ with conic part
$C$ and an embedded point at $P$. More precisely:
\begin{enumerate}
   \item If $C$ is nonsingular at $P$ then the embedded point structure
   is uniquely determined by the normal direction given by the line
   through $P_0$ and $P$.
   \item If $C$ is a pair of lines intersecting at $P$ or a double line,
   then the embedded point is the spatial one, i.e.\ the scheme
   theoretic union of $C$ and the first order infinitesimal neighborhood
   of $P$ in $\PP^3$.
\end{enumerate}
\end{lem}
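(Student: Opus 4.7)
The plan is to construct a natural $Y\in D'$ via a flat limit and then verify uniqueness by showing $D'\cdot\delta = 1$, where $\delta\iso\PP^1$ is the fiber of $b$ over $(P,C)\in\mathcal Z$.

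For existence I would let $M$ be the line through $P$ and $P_0$. Since $P_0\notin V$ and $P\in V$, $M$ meets $V$ only at $P$ and hence meets $C$ only at $P$. For $P_t\in M$ with $P_t\to P$ as $t\to 0$, the scheme $Y_t := C\cup\{P_t\}$ lies in $D'$ for $t\neq 0$ (since $P_0P_t = M$ meets $C$), so by closedness the flat limit $Y_0$ lies in $D'$ and is $C$ with an embedded point at $P$. In local coordinates with $P$ the origin, $V = V(z)$, $P_0 = (0,0,1)$, $P_t = (0,0,t)$ and $I_C = (q(x,y),z)$, a short computation of the ideal intersection $(q,z)\cap(x,y,z-t)$ at $t=0$ gives $(q,xz,yz,z^2)$. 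In the parametrization of Example~\ref{ex:embedded} this is $(a:b)=(0:1)$. For $C$ smooth at $P$ (case (1)) this is the embedded point with normal direction $\partial_z$, i.e.\ the direction of $M$. For $C = V(z,xy)$ or $V(z,y^2)$ (case (2)), the same ideal coincides with $I_C\cap\mathfrak{m}_P^2$, the spatial embedded point.

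For uniqueness, by Lemma~\ref{lem:dualbasis} I would expand $D' = aH + bH' + cA + dE'$ with $D'\cdot\delta = -d$. Three elementary counts give $D'\cdot\alpha = 2$ (a line meets the cone from $P_0$ over $C$ in $2$ points), $D'\cdot\beta = 2$ (the line $P_0P$ meets a fixed quadric in $2$ points, each selecting one plane in the pencil), and $D'\cdot\gamma = 1$ (a pencil of conics in $V$ has one member through $P_0P\cap V$). The same counts on $X = \PP^3\times\hilb^{2m+1}(\PP^3)$ using curves avoiding $\mathcal Z$ give $[\overline{D'}] = 2H + 2H' + A\in\Pic(X)$, so $D' = b^*\overline{D'} - mE' = 2H + 2H' + A - mE'$, where $m$ is the multiplicity of $\overline{D'}$ along $\mathcal Z$, and thus $D'\cdot\delta = m$. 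To check $m = 1$, pick local equations $h_1,h_2$ for $\mathcal Z$ in $X$ at a point with $C$ smooth at $P$, namely ``$P$ in the plane of $C$'' and ``$P$ on the conic of $C$''. Parametrizing $PP_0$, solving for its intersection with the plane of $C$, and evaluating the conic equation there yields a defining equation $F$ of $\overline{D'}$ with $F\equiv h_2\pmod{\mathfrak{m}\,I_\mathcal Z}$, so $F\notin I_\mathcal Z^2$ and $m = 1$.

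The main obstacle is this final local expansion of $F$: a careful first-order computation in the $11$ local parameters on $X$ ($3$ for $\PP^3$, $3$ for the plane of $C$, $5$ for the conic in the plane). It presents no conceptual difficulty, and it also identifies the single point of $D'\cap\delta$ with the normal direction along $M$, consistent with the existence step.
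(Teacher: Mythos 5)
Your existence half is essentially the paper's argument made explicit in coordinates: the paper also produces the subschemes in (1) and (2) as flat limits of $C\cup\{P'\}$ with $P'$ approaching $P$ along the line $P_0P$, and your ideal computation correctly identifies the limit (normal direction along $M$ in case (1), the spatial structure in case (2)). Your uniqueness half, however, takes a genuinely different route. The paper argues geometrically: $C\cup\{P'\}\in D'$ if and only if $P'$ lies on the cone $Q$ over $C$ with vertex $P_0$; since $Y\subset Q$ is a closed condition, every $Y\in D'$ with conic part $C$ is contained in $Q$; at a point where $C$ is nonsingular, $Q$ is smooth and admits a unique embedded point structure, and when $C$ is singular at $P$ a two-line computation with $Q=V(xy)$ (or a double plane) shows that only the spatial structure fits inside $Q$. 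You instead compute the class of $D'$ and aim for $D'\cdot\delta=1$ together with fiberwise transversality of the strict transform. Where your computation applies it independently recovers $D'=2H+2H'+A-E'$, which the paper only establishes in Lemma \ref{lem:div-relations} \emph{using} the present lemma, so your route is not circular.

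It is, however, incomplete in case (2), and this is a genuine gap. The equality $D'\cdot\delta=1$ is an intersection number for the numerical class; for a \emph{specific} fiber $\delta_{(P,C)}$ of $E'\to\mathcal Z$ it gives ``$D'$ meets $\delta_{(P,C)}$ in a single reduced point'' only after one knows $\delta_{(P,C)}\not\subset D'$. Your first-order expansion $F\equiv h_2\ (\mathrm{mod}\ \mathfrak m\,I_{\mathcal Z})$ does deliver exactly this (the strict transform restricts to a nonzero linear form on the fiber $\PP^1$), but you perform it only at points of $\mathcal Z$ where $C$ is smooth at $P$. Over the stratum where $C$ is singular or nonreduced at $P$ --- which is precisely case (2) of the statement, and precisely what the paper needs later in Lemma \ref{lem:nef} to conclude that non-spatial embedded points at singular points lie outside $D'$ --- nothing in your argument rules out $\delta_{(P,C)}\subset D'$, and the nonvanishing of the leading form of $F$ along $\mathcal Z$ at such points is not implied by its nonvanishing on the open dense smooth-at-$P$ locus. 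To close the gap you must either extend the local expansion to that stratum (checking that the relevant coefficient does not vanish there) or import an argument like the paper's cone criterion, which handles all cases uniformly and also identifies the unique point as the spatial one.
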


\begin{proof}
Let $Q$ be the cone over $C$ with vertex $P_0$. This is a quadratic cone
in the usual sense when $C$ is nonsingular, otherwise $Q$ is either a
pair of planes or a double plane. A disjoint union $C \cup \{P'\}$ with
$P'\ne P_0$ is clearly in $D'$ if and only if it is a subscheme of $Q$.

On the one hand this shows that the subschemes $Y$ listed in (1) and (2)
are indeed in $D'$, since they are obtained from $C\cup\{P'\}$ by
letting $P'$ approach $P$ along the line joining $P_0$ and $P$.

On the other hand it follows that if $Y\in D'$ then $Y\subset Q$, since
the latter is a closed condition on $Y$. In case (1) $Q$ is nonsingular
at $P$ and so there is a unique embedded point structure at $P\in C$
which is contained in $Q$. In case (2) the following explicit
computation gives the result: suppose in local affine coordinates that
$C$ is the pair of lines $V(xy,z)$, the ``vertex'' $P_0$ is on the
$z$-axis and $P$ is the origin. Then $Q$ is the pair of planes $V(xy)$.
Any $C$ with an embedded point at $P$ has ideal of the form
\begin{equation*}
   (xy,z)(x,y,z) + (sxy + tz)
\end{equation*}
for $(s:t)\in \PP^1$. This contains the defining equation $xy$ of $Q$ if
and only if $t=0$, which defines the spatial embedded point. The case
where $C$ is double line $V(x^2,z)$ is similar.
\end{proof}

\begin{lem}\label{lem:div-relations}
We have
\begin{align*}
   D &= 2H' + A,\\
   D' &= 2H + 2H' + A - E',\\
   E &= H + H' - E'.
\end{align*}
\end{lem}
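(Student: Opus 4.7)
The plan is to expand each divisor class in the basis $(H, H', A, E')$ of $\Pic(\mathcal{C})$ using the dual basis $(\alpha, \beta, \gamma, -\delta)$ from Lemma \ref{lem:dualbasis}: any class $\Delta$ on $\mathcal{C}$ satisfies
\begin{equation*}
   \Delta = (\Delta\cdot\alpha)\,H + (\Delta\cdot\beta)\,H' + (\Delta\cdot\gamma)\,A - (\Delta\cdot\delta)\,E'.
\end{equation*}
Almost all of the resulting intersection numbers reduce to classical Bézout-type counts in $\PP^3$; the only genuinely delicate one is $D'\cdot\delta$, which lives inside the exceptional divisor $E'$.

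For $D$, since it depends only on the conic part of $Y$, it is pulled back from $\hilb^{2m+1}(\PP^3)$ along $\pr_2\circ b$, so $D\cdot\alpha = D\cdot\delta = 0$ automatically. Next, $D\cdot\beta = 2$ since the two points of $M\cap Q$ each span a unique plane with $L$, producing two pencil members that meet $M$; and $D\cdot\gamma = 1$ because a pencil of plane conics contains a unique member through the point $M\cap V$. Hence $D = 2H'+A$.

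For $E$, the cleanest route is the strict-transform identity $E = b^*\overline{E} - E'$ already established in the proof of Proposition \ref{prop:negative-one}. The divisor $\overline{E}\subset \PP^3\times\hilb^{2m+1}(\PP^3)$ is pulled back along $\id\times\pi$ from the incidence variety $I\subset\PP^3\times\check\PP^3$, which is a $(1,1)$-divisor of class $H+H'$; therefore $E = H + H' - E'$.

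For $D'$, the three ``open'' intersections are again classical counts: the cone over $C$ with vertex $P_0$ is a quadric meeting $L$ in two points, so $D'\cdot\alpha = 2$; the line $PP_0$ meets $Q$ in two points, each determining a unique pencil member $V_t$ through $L$, so $D'\cdot\beta = 2$; and $PP_0$ meets $V$ in a single point, through which there is exactly one member of the pencil of conics, so $D'\cdot\gamma = 1$. The main obstacle, and the reason Lemma \ref{lem:D'-embedded} was proved, is computing $D'\cdot\delta$: every member of $\delta$ \emph{set-theoretically} satisfies the defining incidence of $D'$ (the supporting point $P$ already lies on $C$), so the intersection must be read off from the strict transform scheme-theoretically. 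Lemma \ref{lem:D'-embedded} pins down a unique element of $\delta$ in $D'$, namely the one whose normal direction is along $PP_0$, and a transversality check at that point yields $D'\cdot\delta = 1$. This gives $D' = 2H + 2H' + A - E'$ and completes the proof.
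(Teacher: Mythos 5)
Your overall strategy is the same as the paper's: expand $D$ and $D'$ in the basis $(H,H',A,E')$ by pairing against the dual basis from Lemma \ref{lem:dualbasis}, and obtain $E=H+H'-E'$ from the strict-transform identity $E=b^*\overline{E}-E'$ together with $\overline{E}=(\id\times\pi)^*(I)$ and $[I]=H+H'$. All of your set-theoretic counts ($D\cdot\beta=2$, $D\cdot\gamma=1$, $D'\cdot\alpha=2$, $D'\cdot\beta=2$, $D'\cdot\gamma=1$, and the vanishing ones) agree with the paper and are correct.

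The one place where your argument is incomplete is exactly the computation you single out as delicate, $D'\cdot\delta=1$. Lemma \ref{lem:D'-embedded} gives you a \emph{set-theoretic} intersection consisting of a single point of the fiber $\delta\iso\PP^1$, but you then write that ``a transversality check at that point yields $D'\cdot\delta=1$'' without performing that check; as stated this is an assertion, not a proof, and the intersection could a priori carry multiplicity. The paper explicitly flags this issue and avoids the local transversality verification altogether: since $b(D')\subset\PP^3\times\hilb^{2m+1}(\PP^3)$ is nonsingular and contains the blowup center $\mathcal Z$ (in the coordinates of Remark \ref{rem:localcontr} it is cut out by the single equation \eqref{eq:Clocal2}), its strict transform satisfies $D'=b^*(b(D'))-E'$, whence
\begin{equation*}
   D'\cdot\delta \;=\; b(D')\cdot b_*(\delta)\;-\;E'\cdot\delta \;=\; 0-(-1)\;=\;1
\end{equation*}
with no transversality needed. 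You should either supply this strict-transform argument or carry out the local computation you allude to. A smaller instance of the same omission occurs at $D\cdot\beta=2$: the two intersection points are correctly identified, but the paper additionally checks in local coordinates (the family has $s_{22}=1+t^2$, with two simple roots) that each counts with multiplicity one; a sentence to that effect, or an appeal to generality of the choices, would make your count airtight.
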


\begin{proof}
The last equality was essentially established in the proof of
Proposition \ref{prop:negative-one}: it follows from the observations
(1) $E$ is the strict transform of $b(E)$, and (2) the latter is the
pullback of the incidence variety $I\subset \PP^3\times\check\PP^3$
which is linearly equivalent to $H + H'$.

The remaining two identities are verified by computing the intersection
numbers with the curves in the basis from Lemma \ref{lem:dualbasis}. All
curves and divisors involved are concretely defined and it is easy to
find and count the intersections directly. Some care is needed to rule
out intersection multiplicities, and we often find it most efficient to
resort to a computation in local coordinates. We limit ourselves to
writing out only two cases.

The case $D\cdot \beta = 2$: As we noted $D$ is really a divisor on
$\hilb^{2m+1}(\PP^3)$ and so we shall write it here as
$b^*(\pr_2^*(D))$. Then $D\subset\hilb^{2m+1}(\PP^3)$ consists of all
conics intersecting a fixed line $M$. We have
\begin{equation*}
   b^*(\pr_2^*(D)) \cdot \beta = D \cdot \pr_{2*}(b_*(\beta))
\end{equation*}
and $\pr_{2*}(b_*(\beta))$ is the family of conics $C_t = V_t \cap Q$
where $Q$ is a fixed quadric surface and $V_t$ runs through the pencil
of planes containing a fixed line $L$. For general choices $M\cap Q$
consists of two points, and each point spans together with $L$ a plane.
This yields exactly two planes $V_0$ and $V_1$ in the pencil for which
$C_0 = V_0\cap Q$ and $C_1 = V_1\cap Q$ intersects $M$. It remains to
rule out multiplicities.

In the local coordinates in Remark \ref{rem:localcontr} let $M =
V(x_0,x_1)$. Then the intersection between $M$ and the plane $x_3 =
c_0x_0 + c_1x_1 + c_2x_2$ is the point $(0:0:1:c_2)$. Now $D$ is the
condition that this point is on $C$, i.e.\ it satisfies equation
\eqref{eq:Clocal2}; this gives that $D$ is $s_{22}=0$. On the other
hand, $\pr_{2*}(b_*(\beta))$ is a one parameter family in which $c_i$
and $s_{ij}$ are functions of degree at most $2$ in the parameter. To
stay concrete, let $Q$ be $\sum_i x_i^2 = 0$ and let $V_t$ be $x_3 =
tx_2$. Substitute $x_3 = tx_2$ in the equation for $Q$ to find $C_t =
Q\cap V_t$. This gives in particular $s_{22} = 1 + t^2$ and so the
intersection with $D$ is indeed two distinct points, each of
multiplicity $1$.

The case $D'\cdot \delta = 1$: This is essentially Lemma
\ref{lem:D'-embedded}, but to ascertain there is no intersection
multiplicity to account for we argue differently. $D'$ is the strict
transform of the divisor $b(D') \subset \PP^3\times\hilb^{2m+1}(\PP^3)$,
which contains the center of the blowup. Since $b(D')$ is nonsingular
(pick $P_0 = (0:0:0:1)$ in the definition of $D'$, then in the local
coordinates of Remark \ref{rem:localcontr} it is simply given by the
equation \eqref{eq:Clocal2}) we have $D' = b^*(b(D')) - E'$. Thus
\begin{equation*}
   D'\cdot\delta = b(D')\cdot b_*(\delta) - E'\cdot\delta = 0 - (-1)
\end{equation*}
and we are done.

The remaining cases are either similar to these or easier.
\end{proof}

\subsection{Nef and Mori cones}

It is clear that $H$, $H'$ and $D$ are base point free, hence nef. For
instance, consider $D$: given $Y\in \mathcal C$, choose a line
$M\subset \PP^3$ disjoint from $Y \subset \PP^3$. This defines an
effective representative for $D$ not containing $Y$.

\begin{lem}\label{lem:nef}
The divisor $D'+H'$ is nef.
\end{lem}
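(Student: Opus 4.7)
My plan is to use the decomposition $D'+H' = D + H + E$ coming from Lemma~\ref{lem:div-relations}: one checks $D + H + E = (2H' + A) + H + (H + H' - E') = 2H + 3H' + A - E'$, which agrees with $D'+H' = (2H + 2H' + A - E') + H'$. Since $D$ and $H$ are base-point-free (hence nef) while $E$ is effective, for any irreducible curve $C \subset \mathcal{C}$ with $C \not\subset E$ proper intersection gives $E \cdot C \geq 0$, and $(D'+H') \cdot C \geq 0$ follows at once. So the real work will be to handle curves $C \subset E$.

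For $C \subset E$ I will compute the restriction $(D'+H')|_E$ and exhibit it as the pullback of a nef class. Proposition~\ref{prop:negative-one} presents $E$ as a $\PP^5$-bundle $\pi\colon E \to I$ whose fiber over $(P, V) \in I$ is the $\PP^5$ of plane conics in $V$. On this bundle, $H|_E$ and $H'|_E$ are pulled back from $I$ via the two projections $I \subset \PP^3 \times \check\PP^3$. The class $A|_E$ restricts to $\OO(1)$ on every fiber, because $A$ is the tautological class on $\hilb^{2m+1}(\PP^3) \iso \PP(\E^\vee)$ (Lemma~\ref{lem:P5bundle}), and $E$ is obtained by base change along $\pr_2\colon I \to \check\PP^3$. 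Similarly $E'|_E$ restricts to $\OO(1)$ on every fiber, since inside the $\PP^5$ of plane conics in $V$ the locus $E' \cap E$ is cut out by the linear condition that the conic pass through $P$.

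Consequently $(E' - A)|_E$ is fiberwise trivial and equals $\pi^* L$ for some $L \in \Pic(I)$. To pin down $L$ I plan to pull back along the rational section $s_Q\colon I \to E$ sending $(P, V)$ to $(Q \cap V) \cup \{P\}$ for a fixed smooth quadric $Q \subset \PP^3$; this is the section already featured in the proof of Lemma~\ref{lem:dualbasis}, where $s_Q^* A = 0$ was established. Geometrically, $s_Q(P, V)$ carries an embedded point precisely when $P \in Q \cap V$, i.e.\ when $P \in Q$, so $s_Q^* E' = \OO_I(2, 0) = 2H|_I$. Hence $L = 2H$ and $E'|_E = A|_E + 2H|_E$ on $E$.

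Substituting into the expression for $(D'+H')|_E$ I expect to obtain $(D'+H')|_E = (2H + 3H' + A - E')|_E = 3H'|_E$, which is the pullback of $\OO_{\check\PP^3}(3)$ via $E \to \check\PP^3$ and therefore base-point-free, in particular nef, on $E$. This yields $(D'+H') \cdot C \geq 0$ for every $C \subset E$, which together with the first case completes the proof. The main obstacle will be the identity $E'|_E = 2H|_E + A|_E$: it combines the fiberwise comparison showing that $A|_E$ and $E'|_E$ both restrict to $\OO(1)$ with the single-section pullback to determine the twist coming from $\Pic(I)$, after which the rest is just a formal rewriting of divisor classes.
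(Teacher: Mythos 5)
Your proof is correct, and while it starts from the same decomposition $D'+H' = H + D + E$ (from Lemma~\ref{lem:div-relations}), it splits into cases differently from the paper and handles the hard case by a genuinely different argument. The paper's proof first narrows down the base locus of $D'$ to the locus $B$ of degenerate conics with a \emph{spatial} embedded point (this requires the geometric analysis of Lemma~\ref{lem:D'-embedded}), deals with curves $T\not\subset B$ via $D'\cdot T\ge 0$ directly, and only invokes $D'+H'=H+D+E$ for $T\subset B$, where $E\cdot T=0$ because $B$ and $E$ are disjoint. You instead split on whether the curve lies in $E$: off $E$ the decomposition gives nonnegativity immediately, and on $E$ you compute the restricted class. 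Your key identity $E'|_E = 2H|_E + A|_E$ is right: both $A$ and $E'$ restrict to $\OO(1)$ on the $\PP^5$-fibers of $E\to I$ (the latter is exactly the hyperplane computation in the proof of Proposition~\ref{prop:negative-one}), so $(E'-A)|_E$ is pulled back from $I$, and the twist is determined by the section $s_Q$. One small point to tighten: you call $s_Q$ a rational section, but it is in fact a morphism $I\to E$ --- via $E\iso\overline E = I\times_{\check\PP^3}\hilb^{2m+1}(\PP^3)$ it is the base change of the everywhere-defined section $\check\PP^3\to\PP(\E^\vee)$ from the proof of Lemma~\ref{lem:dualbasis}, with image over $(P,V)$, $P\in Q$, the conic $Q\cap V$ with the \emph{planar} embedded point at $P$; this matters because pulling back divisor classes along a map undefined along a divisor would not be legitimate. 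With that, $(D'+H')|_E = 3H'|_E$ is nef, consistently with $(D'+H')\cdot\zeta=0$ in Lemma~\ref{lem:nefmori}. What your route buys is that it bypasses the base-locus analysis of $D'$ and Lemma~\ref{lem:D'-embedded} entirely, at the cost of a Picard-group computation on the bundle $E/I$; the paper's route is more elementary but leans on the explicit classification of embedded-point structures.
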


\begin{proof}
We begin by narrowing down the base locus of $D'$. First consider an
element $Y\in\mathcal C$ without embedded point, that is a disjoint
union $Y = C \cup \{P\}$. Then choose $P_0$ such that the line through
$P_0$ and $P$ is disjoint from $C$. This defines a representative for
$D'$ not containing $Y$, so $Y$ is not in the base locus.

Next let $Y$ be a conic $C$ with an embedded point at a point $P\in C$
where $C$ is nonsingular. The tangent to $C$ at $P$ together with the
normal direction given by the embedded point determines a plane. Pick
$P_0$ such that the line through $P$ and $P_0$ defines a normal
direction to $C$ which is distinct from that defined by the embedded
point. This determines a representative for $D'$ which by Lemma
\ref{lem:D'-embedded}(i) does not contain $Y$, so $Y$ is not in the base
locus.

The remaining possibility is that $Y$ is either a pair of intersecting
lines with an embedded point at the singularity, or a double line with
an embedded point. Pick a representative for $D'$ by choosing $P_0$
outside the plane containing the degenerate conic. If the embedded point
is not spatial, then Lemma \ref{lem:D'-embedded}(ii) shows that $Y$ is
not in $D'$. So $Y$ is not in the base locus unless the embedded point
is spatial.

Thus let $B \subset \mathcal C$ be the locus of intersecting lines with
a spatial embedded point at the origin, together with double lines with
a spatial embedded point. By the above $B$ contains the base locus of
$D'$, so if $T\subset \mathcal C$ is an irreducible curve not contained
in $B$ then
\begin{equation*}
   (D' + H')\cdot T = D'\cdot T + H' \cdot T \ge 0
\end{equation*}
as both terms are nonnegative. If on the other hand $T\subset B$ we
observe that $T\cdot E = 0$: in fact $B$ and $E$ are disjoint, since
every element in $B$ has a spatial embedded point, whereas all elements
in $E$ are planar. By the relations in Lemma \ref{lem:div-relations}
\begin{equation*}
   D' + H' = H + D + E
\end{equation*}
and so, using that $H$ and $D$ are nef,
\begin{equation*}
   (D' + H')\cdot T = (H + D + E)\cdot T
   = \underbrace{H\cdot T + D\cdot T}_{\ge 0}
   + \underbrace{E\cdot T}_{0} \ge 0.
\end{equation*}
\end{proof}

\begin{lem}\label{lem:nefmori}
The dual basis to $(H, H', D, D'+H')$ is $(\epsilon, \eta, \zeta,
\delta)$.
\end{lem}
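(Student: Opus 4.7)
The plan is to compute the $4\times 4$ matrix of intersection numbers $(H, H', D, D'+H') \cdot (\epsilon, \eta, \zeta, \delta)$ and verify it equals the identity. Using the relations in Lemma \ref{lem:div-relations} I expand $D = 2H' + A$ and $D' + H' = 2H + 3H' + A - E'$, reducing the task to computing $c\cdot H$, $c\cdot H'$, $c\cdot A$, $c\cdot E'$ for each $c\in\{\epsilon,\eta,\zeta,\delta\}$ and then combining linearly. The row for $\delta$, namely $(0,0,0,-1)$, is already part of Lemma \ref{lem:dualbasis}.

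For $\epsilon$ and $\zeta$ the intersections with $H$, $H'$ and $A$ are immediate from the projections in Diagram \ref{eq:Ccontr}: the image of $\epsilon$ is a line in $\PP^3$ and a point in $\hilb^{2m+1}(\PP^3)$, giving $(1,0,0)$, while $\zeta$ has image a point in $\PP^3$, a point in $\check\PP^3$ and a line in a fiber of $\pi$, giving $(0,0,1)$. The intersections with the exceptional divisor $E'$ count those members of each family which carry an embedded point: $\epsilon\cdot E' = 2$ (one contribution for each of the two points of $L\cap C$) and $\zeta\cdot E' = 1$ (from the unique pencil member through $P$). That these intersections have multiplicity one can be verified by an infinitesimal computation in local coordinates as in Example \ref{ex:embedded} and Remark \ref{rem:localcontr}.

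The curve requiring the most care is $\eta$, which lies entirely inside $E'$. A direct computation of $\eta\cdot A$ would require analyzing the nontrivial line sub-bundle of $\E|_{\check L}$ whose fiber at $V$ is spanned by the square of the linear form cutting out $L$ in $V$. My plan is to avoid this by indirect calculation. Projection gives $\eta\cdot H = 0$ and $\eta\cdot H' = 1$, since the image of $\eta$ in $\check\PP^3$ is the dual line $\check L$. Choosing the auxiliary line $M$ defining $D$ to be disjoint from $L$ makes each conic part $2L\subset V_t$ of a member of $\eta$ disjoint from $M$, so $\eta\cdot D = 0$; and because every member of $\eta$ carries a spatial embedded point, $\eta$ is disjoint from the planar locus $E$, giving $\eta\cdot E = 0$. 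Combining these two vanishings with $D = 2H' + A$ and $E = H + H' - E'$ from Lemma \ref{lem:div-relations} yields $\eta\cdot A = -2$ and $\eta\cdot E' = 1$.

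Assembling the four rows and expanding by the relations above produces the $4\times 4$ identity matrix, which is the statement of the lemma. The hardest step is the indirect determination of $\eta\cdot A$ and $\eta\cdot E'$; all other intersection numbers are either read off from a projection, counted by hand from the definition of each family, or already recorded in Lemma \ref{lem:dualbasis}.
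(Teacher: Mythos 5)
Your proof is correct and follows essentially the same route as the paper: the only delicate case is $\eta$, and both arguments handle it by combining $\eta\cdot E=0$ (disjointness of $\eta$ from the planar locus) with the relations of Lemma \ref{lem:div-relations} --- the paper via $(D'+H')\cdot\eta=(H+D+E)\cdot\eta=0$, you by first solving for $\eta\cdot A=-2$ and $\eta\cdot E'=1$. The remaining entries (the $\delta$ row from Lemma \ref{lem:dualbasis}, $\epsilon\cdot E'=2$, $\zeta\cdot E'=1$) agree with what the paper uses implicitly, so only the organization of the linear algebra through the $(H,H',A,E')$ basis differs.
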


\begin{proof}
Lemma \ref{lem:dualbasis} together with the relations in Lemma
\ref{lem:div-relations} implies that $(D'+H')\cdot \delta = 1$ and the
other tree intersection numbers with $\delta$ vanish.

Of the remaining intersection numbers only those involving $\eta$
requires some care and we shall write out only those.

A representative for $\eta$ is obtained by fixing a line $L$ and a point
$P\in L$ and letting the plane $V_t$ vary in the pencil of planes
containing $V_t$. Then $C_t$ is the double $L$ inside $V_t$ and $Y_t$ is
$C_t$ together with an embedded spatial point at $P$. Then:
\begin{itemize}
   \item Intersecting with $H$ imposes the condition that $P$ is
   contained in a fixed but arbitrary plane, but $P$ is fixed, so
   $H\cdot \eta = 0$.
   \item Intersecting with $H'$ imposes the condition that $V_t$
   contains a fixed but arbitrary point $P_0$, this gives
   $H'\cdot \eta = 1$. (In fact this can be identified with the
   intersection number $H'\cdot \check L = 1$ in $\check \PP^3$, where
   $\check L$ is the dual line to $L$, so there is no subtlety regarding
   transversality of the intersection.)
   \item Intersecting with $D$ imposes the condition that $C_t$
   intersects a fixed but arbitrary line $M$, but $C_t$ has fixed
   support $L$, so $D\cdot \eta = 0$.
\end{itemize}
As $\eta$ is contained in the base locus of $D'$ we cannot find
$D'\cdot\eta$ directly. As in the proof of Lemma \ref{lem:nef} we
instead rewrite $D' + H'$ as $H + D + E$ and take advantage of $\eta$
being disjoint from $E$. This gives
\begin{equation*}
   (D'+H') \cdot \eta = (H+D+E)\cdot \eta = 0.
\end{equation*}
\end{proof}

We wish to point out that the computation in the very last paragraph,
showing $D'\cdot \eta = -1$, is what made us realize that the addition
of $H'$ is necessary to produce a nef divisor.

\begin{proof}[Proof of Theorem \ref{thm:mori-C}]
The four divisors in Lemma \ref{lem:nefmori} are nef (the first three
are base point free, and the fourth is treated in Lemma \ref{lem:nef})
and the four curves are effective by definition. Hence they span the nef
and Mori cones of $\mathcal C$, respectively. Finally by Lemmas
\ref{lem:KC} and \ref{lem:div-relations} we have
\begin{equation*}
   K = -2H + 5H' - 5D - (D'+H')
\end{equation*}
so in view of the dual bases in Lemma \ref{lem:nefmori} we read off that
$K$ is negative on $\epsilon$, $\zeta$, $\delta$ and positive on $\eta$.
\end{proof}


\bibliographystyle{plain}
\bibliography{bibliography}

\end{document}